\newtheorem{theorem}{Theorem}[section]
\newtheorem{lemma}[theorem]{Lemma}
\newtheorem{corollary}[theorem]{Corollary}
\newtheorem{proposition}[theorem]{Proposition}
\theoremstyle{definition}
\newtheorem{example}[theorem]{Example}
\theoremstyle{remark}
\newtheorem{remark}[theorem]{Remark}
\numberwithin{equation}{section}
\begin{document}

\title[Obstructions to smooth group actions on $4$-manifolds]{Obstructions to smooth group actions on $4$-manifolds from families Seiberg-Witten theory}
\author{David Baraglia}

\address{School of Mathematical Sciences, The University of Adelaide, Adelaide SA 5005, Australia}

\email{david.baraglia@adelaide.edu.au}

\begin{abstract}
Let $X$ be a smooth, compact, oriented $4$-manifold. Building upon work of Li-Liu, Ruberman, Nakamura and Konno, we consider a families version of Seiberg-Witten theory and obtain obstructions to the existence of certain group actions on $X$ by diffeomorphisms. The obstructions show that certain group actions on $H^2(X , \mathbb{Z})$ preserving the intersection form can not be lifted an action of the same group on $X$ by diffeomorphisms. Using our obstructions, we construct numerous examples of group actions which can be realised continuously but can not be realised smoothly for any differentiable structure. For example, we construct compact simply-connected $4$-manifolds $X$ and involutions $f : H^2(X , \mathbb{Z}) \to H^2(X , \mathbb{Z})$ such that $f$ can be realised by a continuous involution on $X$ or by a diffeomorphism, but not by an involutive diffeomorphism for any smooth structure on $X$.
\end{abstract}


\date{\today}



\maketitle


\section{Introduction}

The Nielsen realisation problem, solved by Kerckhoff \cite{ker}, shows that any finite subgroup $G$ of the mapping class group $\pi_0 (Diff(M))$ of a compact oriented surface $M$ of negative Euler characteristic can be realised by an action of $G$ on $M$ by diffeomorphisms. For an infinite group $G$, the Miller-Morita-Mumford classes show that in general there are obstructions to lifting a homomorphism $G \to \pi_0 (Diff(M))$ to $Diff(M)$ \cite{mor}. In this paper, we obtain obstructions to the analogous lifting problem for homomorphisms $G \to \pi_0( Diff^+(X))$, where $X$ is a compact oriented smooth $4$-manifold and $Diff^+(X)$ is the group of orientation preserving diffeomorphisms of $X$. The oriented mapping class group $\pi_0( Diff^+(X))$ is unfortunately not very well understood. To get something more manageable, consider diffeomorphisms modulo pseudo-isotopy. Let $Aut( H^2(X , \mathbb{Z}))$ denote the automorphisms of $H^2(X , \mathbb{Z})$ preserving the intersection pairing. The induced action of $Diff^+(X)$ on $H^2(X , \mathbb{Z})$ descends through pseudo-isotopy giving a homomorphism $\phi : \tilde{\pi}_0( Diff^+(X)) \to Aut( H^2(X , \mathbb{Z}))$, where $\tilde{\pi}_0 (Diff^+(X))$ denotes the group of pseudo-isotopy classes of orientation preserving diffeomorphisms of $X$. When $X$ is simply-connected, it has been shown by Kreck that the map $\phi$ is injective \cite{kre}. This motivates us to consider the following realisation problem: {\em let $X$ be a compact oriented smooth $4$-manifold, let $G$ be a finitely generated group and $\rho : G \to Aut( H^2(X , \mathbb{Z}))$ a homomorphism. Can $\rho$ be realised by an action of $G$ on $X$ by diffeomorphisms?}\\

We note that the map $Diff^+(X) \to Aut( H^2(X , \mathbb{Z}))$ is in general not surjective, however there are many $X$ for which this is the case \cite{wall}. One can also consider the analogous realisation problem for homeomorphisms. To give contrast, let us point out that if $X$ is simply-connected then the map $\pi_0( Homeo^+(X) ) \to Aut( H^2(X , \mathbb{Z}) )$ is known to be surjective by Freedman \cite{fre} and injective by Quinn \cite{qui}. Here $\pi_0( Homeo^+(X) )$ denotes the group of isotopy classes of orientation preserving homeomorphisms of $X$.\\

The simplest type of realisation problem is to take $G = \mathbb{Z}$. A homomorphism $\rho : \mathbb{Z} \to Aut( H^2( X , \mathbb{Z} ) )$ is determined by its value $f = \rho(1) \in Aut( H^2( X , \mathbb{Z} ) )$ on a generator and the realisation problem just asks whether $f$ can be realised by a diffeomorphism. Obstructions to realising elements of $Aut( H^2(X , \mathbb{Z}))$ by diffeomorphisms have been obtained from Donaldson theory (eg, \cite{frmo}) and Seiberg-Witten theory (eg, \cite{lon,na1,lili}). In this paper we will use Seiberg-Witten theory to obtain obstructions for a variety of different groups. Specifically, we will consider cyclic groups of finite even order $\mathbb{Z}_{2k}$, free abelian groups $\mathbb{Z}^d$ and the group $\mathbb{Z}_2 \times \mathbb{Z}_2$. However, our methods are more general and can be applied to many other groups.\\

Our obstructions are obtained by considering a parametrised version of Seiberg-Witten theory for smooth families of $4$-manifolds. The idea of studying Seiberg-Witten theory in families has been used by a number of authors, eg, \cite{liliu,na1,na2,na3,rub,konno,konno2}. In particular, Nakamura \cite{na1,na2} has used families Seiberg-Witten theory to obtain obstructions in the cases $G = \mathbb{Z}$ and $G = \mathbb{Z}^2$. Our obstructions are obtained by similar methods to Nakamura, but are considerably more general. In particular, we are able to relax the condition that the fibrewise tangent bundle of our family of $4$-manifolds admits a Spin$^c$-structure, expanding the range of possible applications.\\

A brief outline our method to obtain obstructions is as follows. Let $\rho : G \to Aut( H^2(X , \mathbb{Z}))$ be given and suppose $\rho$ can be obtained by an action of $G$ on $X$ by diffeomorphisms. Assume that $b^+(X) > 0$ and that we can find a compact manifold $\tilde{B}$ of dimension $d = b^+(X)$ on which $G$ acts freely. We obtain a family $\mathbb{X} \to B$ of $4$-manifolds over $B = \tilde{B}/G$ by taking $\mathbb{X} = X \times_G \tilde{B}$. We would like to think of $B$ as a finite dimensional substitute for the classifying space $BG$ and $\mathbb{X}$ as a finite dimensional substitute for $X \times_G EG$. We equip $\mathbb{X}$ with a family of metrics $\{ g_b \}_{b \in B}$ and generic self-dual $2$-forms $\{ \mu_b \}$ and consider the family of $\mu$-perturbed Seiberg-Witten equations parametrised by $B$. Let $H^+(\mathbb{X}/B) \to B$ be the vector bundle over $B$ whose fibres over $b$ is the space of harmonic self-dual $2$-forms on the corresponding fibres of $\mathbb{X} \to B$. By counting the number of reducible solutions to the Seiberg-Witten equations in the family parametrised by $B$, we are able to deduce, under certain conditions, that the $d$-th Stiefel-Whitney class of $H^+(\mathbb{X}/B)$ vanishes. This is stated as Theorem \ref{t:main} below. On the other hand, we can compute the $d$-th Stiefel-Whitney class of $H^+(\mathbb{X}/B)$ in terms of the homomorphism $\rho : G \to Aut( H^2(X , \mathbb{Z}))$. This gives an obstruction to realising $\rho$ by an action of $X$ by diffeomorphisms.\\

Our main theorem, which is used to produce obstructions to smooth group actions, applies more generally to smooth families of $4$-manifolds. To state the theorem, let $X$ be a compact, oriented, smooth $4$-manifold with $b_1(X) = 0$ and $b^+(X) > 0$. Suppose that $\pi : \mathbb{X} \to B$ is a smooth fibrewise oriented family with fibres diffeomorphic to $X$. Let $\mathcal{S}(X)$ denote the set of Spin$^c$-structures on $X$. Then as explained in Section \ref{sec:fspinc}, the family $\mathbb{X} \to B$ determines a monodromy action of $\pi_1(B)$ on $\mathcal{S}(X)$. Now we are ready to state the theorem:

\begin{theorem}\label{t:main}
Let $X$ be a compact, oriented, smooth $4$-manifold with $b_1(X) = 0$ and $b^+(X) > 0$. Suppose that $\pi : \mathbb{X} \to B$ is a fibrewise oriented family over a compact base of dimension $d = b^+(X)$. Let $\Gamma$ be a monodromy invariant Spin$^c$-structure on $X$ such that $c(\Gamma)^2 > \sigma(X)$. Suppose that one of the following holds:
\begin{itemize}
\item[(1)]{we have $c^2(\Gamma) - \sigma(X) = 8 \; ({\rm mod} \; 16 )$, or}
\item[(2)]{we have that $\Gamma$ extends to a Spin$^c$-structure on $T(\mathbb{X}/B)$ (we will see this holds for instance if $H^3(B , \mathbb{Z}) = 0$).}
\end{itemize}
Then the $d$-th Stiefel-Whitney class of $H^+(\mathbb{X}/B) )$ vanishes.
\end{theorem}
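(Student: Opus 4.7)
The plan is to realise the reducible solutions of the families Seiberg-Witten equations over $B$ as the zero set of a canonical section of $H^+(\mathbb{X}/B)$ and then to prove that the mod 2 count of these zeros vanishes. Concretely, I first fix a smooth fibrewise family of Riemannian metrics $\{g_b\}_{b\in B}$ and a generic fibrewise family of self-dual perturbations $\{\mu_b\}$. Monodromy-invariance of $\Gamma$ provides a compatible family of Spin$^c$-structures on the fibres; in case (2) this lifts to a Spin$^c$-structure on $T(\mathbb{X}/B)$, while in case (1) the congruence on $c(\Gamma)^2 - \sigma(X)$ will be used below to bypass the need for such a lift. Define the section $s : B \to H^+(\mathbb{X}/B)$ by letting $s(b)$ be the $g_b$-harmonic projection of $\mu_b - 2\pi c(\Gamma)^+_{g_b}$. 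A direct computation shows that the $\mu$-perturbed SW equations at parameter $b$ admit a reducible solution precisely when $s(b) = 0$, and for generic $\mu$ the section $s$ is transverse to the zero section, so the reducible locus $Z(s)\subset B$ is a finite set of points.

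The first key observation is that, since $\mathrm{rank}\, H^+(\mathbb{X}/B) = b^+(X) = d = \dim B$, the count $\#Z(s) \bmod 2$ is precisely the mod 2 Euler number of $H^+(\mathbb{X}/B)$ on each component of $B$, which equals the evaluation $\langle w_d(H^+(\mathbb{X}/B)), [B]_{\mathbb{Z}/2}\rangle$. Since $B$ is a closed $d$-manifold, mod 2 Poincar\'e duality identifies the collection of these evaluations (one per component) with the class $w_d(H^+(\mathbb{X}/B))$ itself, so it suffices to prove that $\#Z(s)$ is even.

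To establish evenness I analyse the parametrised moduli space $\mathcal{M}$ of SW solutions on $\mathbb{X}/B$. Compactness of $\mathcal{M}$ follows from $b^+(X) > 0$ and the standard a priori bounds. Near each reducible $b_0 \in Z(s)$, a Kuranishi model of $\mathcal{M}$ is built from the kernel of the fibrewise Dirac operator modulo the residual $U(1)$ gauge action; since $c(\Gamma)^2 > \sigma(X)$ forces the complex Dirac index $(c(\Gamma)^2 - \sigma(X))/8$ to be positive, this kernel is nontrivial, and the link of $b_0$ in $\mathcal{M}$ takes the form of a complex projective bundle over a small linking sphere in $B$. In case (2) the extended families Spin$^c$-structure provides a globally defined Dirac index bundle on $B$, and a characteristic-class computation using this index bundle yields that the total reducible count is even. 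In case (1) the main obstacle appears: the congruence $c(\Gamma)^2 - \sigma(X) \equiv 8 \pmod{16}$ makes the complex Dirac index odd, and the heart of the argument must exploit the Pin$^c$/quaternionic symmetry on the fibrewise Dirac kernels (in the spirit of Furuta's $10/8$ theorem) to pair the reducibles into orbits of even size without the benefit of a global Spin$^c$-structure on $T(\mathbb{X}/B)$. Once both cases yield evenness the theorem follows.
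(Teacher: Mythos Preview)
Your setup through the first two paragraphs is essentially correct and matches the paper: the reducibles are the transverse zeros of a section of $H^+(\mathbb{X}/B)$, and their mod $2$ count is $w_d(H^+(\mathbb{X}/B))$. After that, however, the argument goes wrong in both cases.

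For the local structure: the link of a reducible in $\mathcal{M}$ is \emph{not} a projective bundle over a linking sphere in $B$. With $\dim B = b^+(X)$ and $[\mu]$ transverse to the wall, the base directions are completely absorbed into cancelling the $H^+$ part of the obstruction space, and the Kuranishi model at $(A_i,0)$ has tangent space $\ker(D_{A_i})$ and obstruction space $\mathrm{Coker}(D_{A_i})$. One must first arrange, by a further perturbation along the wall (this is a separate lemma the paper proves), that $\mathrm{Coker}(D_{A_i})=0$ at every reducible; then the link is a single copy of $\mathbb{CP}^{m-1}$ with $m=(c(\Gamma)^2-\sigma(X))/8$. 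You have not addressed this cokernel-vanishing step, and without it the link is not even a manifold.

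In case (1) your proposed mechanism is simply wrong. There is no Pin$^c$ or quaternionic structure available here: $\Gamma$ is a general Spin$^c$-structure, not spin, so nothing in Furuta's argument applies, and the reducibles do not come in pairs. The actual point of the congruence is that $m$ is odd, hence $m-1$ is even, hence $\mathbb{CP}^{m-1}$ is \emph{nonzero in the unoriented cobordism ring}. The irreducible moduli space $\mathcal{M}^*$ is a compact manifold with boundary equal to $N$ disjoint copies of $\mathbb{CP}^{m-1}$, and this forces $N$ to be even.

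In case (2) your sketch (``a characteristic-class computation using the index bundle'') is too vague to be an argument, and the index bundle on $B$ is not what is used. The extension of $\Gamma$ to $T(\mathbb{X}/B)$ is needed to construct a tautological line bundle $L_B$ over the \emph{families configuration space} (hence over $\mathcal{M}^*$) that restricts to $\mathcal{O}(1)$ on each boundary $\mathbb{CP}^{m-1}$. Pairing $c_1(L_B)^{m-1}$ with the boundary of $\mathcal{M}^*$ then gives $N \equiv 0 \pmod 2$. This line bundle is precisely what can fail to exist when $\Gamma$ does not extend, which is why case (2) needs that hypothesis.
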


At first glance, this theorem may appear to impose only a very mild constraint on smooth families of $4$-manifolds. However, we show in subsequent sections of the paper that many interesting obstructions to smooth group actions can be obtained through this result. In Section \ref{sec:appl} we consider a number of specific applications and examples. Below we highlight a few such applications. In what follows, $E_8$ denotes the unique compact simply-connected topological $4$-manifold whose intersection form is the $E_8$ lattice.

\begin{proposition}
Let $X$ be the topological $4$-manifold $X = \# a (S^2 \times S^2) \# 2b(-E_8)$ where $a > 3b$ and $b\ge 1$. Then $H^2(X , \mathbb{Z})$ admits an involution $f : H^2(X , \mathbb{Z}) \to H^2(X , \mathbb{Z})$ with the following properties:
\begin{itemize}
\item[(i)]{$f$ can be realised by the induced action of a continuous, locally linear involution $X \to X$.}
\item[(ii)]{$f$ can be realised by the induced action of a diffeomorphism $X \to X$, where the smooth structure is obtained by viewing $X$ as $\# (a-3b)(S^2 \times S^2) \# b(K3)$.}
\item[(iii)]{$f$ can not be realised by the induced action of an involutive diffeomorphism $X \to X$ for any smooth structure on $X$.}
\end{itemize}
\end{proposition}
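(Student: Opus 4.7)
The plan is to take $f = -\mathrm{id}$ on $H^2(X, \mathbb{Z})$, which is manifestly an involution preserving the intersection form, and verify (i)--(iii) for this $f$. I will use throughout that $b^+(X) = a$, $\sigma(X) = -16b$, and that $X$ is simply-connected and spin (since both $S^2 \times S^2$ and $-E_8$ are), hence admits a unique $\mathrm{Spin}^c$-structure $\Gamma$ refining the spin structure, satisfying $c(\Gamma) = 0$.

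For (i), I would work on the topological decomposition $X \cong \#(a-3b)(S^2 \times S^2) \# b \cdot K3$ and assemble $f$ from the antipodal involution on each $S^2 \times S^2$ factor (smooth, hence locally linear, and visibly $-\mathrm{id}$ on the corresponding hyperbolic summand of $H^2$) together with a locally linear involution on each $K3$ summand realising $-\mathrm{id}$ on its $H^2$; existence of the latter follows from classical constructions of locally linear actions on simply-connected topological $4$-manifolds, combined with the Freedman--Quinn realisation framework. For (ii), I would use the same decomposition $Y = \#(a-3b)(S^2 \times S^2) \# b \cdot K3$, now viewed as a smooth manifold: the hypothesis $a > 3b$ guarantees at least one $S^2 \times S^2$ summand in $Y$, and after this stabilisation a Wall-style realisation theorem produces a (necessarily non-involutive, by (iii)) diffeomorphism of $Y$ inducing $-\mathrm{id}$ on $H^2$.

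The core of the proposition is (iii), which I would prove by contradiction using Theorem \ref{t:main}. Assume an involutive diffeomorphism $\phi : X \to X$ induces $f$, for some smooth structure on $X$. Take $G = \mathbb{Z}/2$ acting antipodally on $\tilde B = S^a$, so that $B = \mathbb{RP}^a$ has $\dim B = a = b^+(X)$, and form the family $\mathbb{X} = X \times_{\mathbb{Z}/2} S^a \to B$. The $\mathrm{Spin}^c$-structure $\Gamma$ is automatically monodromy invariant (being unique), and $c(\Gamma)^2 = 0 > -16b = \sigma(X)$ because $b \geq 1$. Since $a > 3b \geq 3$ forces $a \geq 4$, and $H^3(\mathbb{RP}^a, \mathbb{Z}) = 0$ for $a \geq 4$, Case (2) of Theorem \ref{t:main} applies and yields $w_a\bigl(H^+(\mathbb{X}/B)\bigr) = 0$.

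To reach a contradiction I would now compute this top Stiefel--Whitney class directly. Since $\phi^*$ acts on $H^+(X, \mathbb{R}) \cong \mathbb{R}^a$ as $-\mathrm{id}$, the vector bundle $H^+(\mathbb{X}/B) = S^a \times_{\mathbb{Z}/2} H^+(X, \mathbb{R})$ is isomorphic to the sum of $a$ copies of the tautological line bundle $\gamma \to \mathbb{RP}^a$. With $x = w_1(\gamma)$, the total Stiefel--Whitney class is $(1+x)^a$, giving $w_a(H^+(\mathbb{X}/B)) = x^a \neq 0$ in $H^a(\mathbb{RP}^a, \mathbb{Z}/2) = \mathbb{Z}/2$, contradicting the vanishing from Theorem \ref{t:main}. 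The main obstacle I anticipate is part (ii): one must cite or adapt a non-trivial smooth realisation/stabilisation theorem to obtain a diffeomorphism of $Y$ inducing $-\mathrm{id}$. By contrast, (i) reduces to standard topological constructions, and (iii) is then a clean application of Theorem \ref{t:main} combined with the short Stiefel--Whitney computation above.
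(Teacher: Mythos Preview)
Your argument for (iii) is correct and essentially the paper's: both reduce to showing that an involutive diffeomorphism acting as $-\mathrm{id}$ on a maximal positive definite subspace violates $w_d(H^+(\mathbb{X}/B))=0$ for the family over $\mathbb{RP}^d$. Your treatment of (ii) via Wall's stabilisation theorem is also the paper's.

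The genuine divergence is the choice of $f$, and this creates a real gap in your (i). You take $f=-\mathrm{id}$ on all of $H^2(X,\mathbb{Z})$; the paper does \emph{not}. The paper's $f$ acts as $-\mathrm{id}$ on the $aH$ summand but merely \emph{swaps} the $-E_8$ summands in pairs. This is exactly what makes (i) immediate: the paper uses complex conjugation on each $\mathbb{CP}^1$ factor of $S^2\times S^2$ (which has fixed points, allowing equivariant connected sums at fixed points), and then attaches the $-E_8$ pieces at a free orbit $\{x,\iota(x)\}$ so that the involution simply interchanges them. No realisation theorem on $E_8$ or $K3$ is needed.

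Your sketch for (i), by contrast, has two problems. First, the antipodal involution on $S^2\times S^2$ is \emph{free}, so there are no fixed points at which to perform an equivariant connected sum; you cannot string together $(a-3b)$ copies one at a time this way. Second, and more seriously, asserting a locally linear involution on $K3$ inducing $-\mathrm{id}$ on $H^2$ is not a consequence of the ``Freedman--Quinn realisation framework''. Freedman--Quinn gives $\pi_0(\mathrm{Homeo}^+(K3))\cong \mathrm{Aut}(H^2(K3,\mathbb{Z}))$, so $-\mathrm{id}$ is realised by \emph{some} homeomorphism, but there is no reason that homeomorphism is an involution. Producing a locally linear $\mathbb{Z}_2$-action with prescribed action on cohomology is a separate and substantially harder problem. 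The paper's choice of $f$ sidesteps this entirely, at no cost to (ii) or (iii) since only the action on $H^+$ matters there.
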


\begin{proposition}
For any odd integer $k \ge 1$, let $X$ be the topological $4$-manifold $X = \# a (S^2 \times S^2) \# 2kb(-E_8)$ where $a > 3kb$, $b\ge 1$ and $a$ is odd. Then $H^2(X , \mathbb{Z}) = a H \oplus 2kb(-E_8)$, where $H = \left( \begin{matrix} 0 & 1 \\ 1 & 0 \end{matrix} \right)$. Let $f : H^2(X , \mathbb{Z}) \to H^2(X , \mathbb{Z})$ be an isometry of order $2k$ which acts as $\left( \begin{matrix} 0 & -1 \\ -1 & 0 \end{matrix} \right)$ on each $H$ summand and acts as a permutation of the $-E_8$ summands such that each cycle of the permutation has length $2k$. Then $f$ has the following properties:
\begin{itemize}
\item[(i)]{$f$ can be realised by a continuous, locally linear action of $\mathbb{Z}_{2k}$ on $X$.}
\item[(ii)]{$f$ can be realised by the induced action of a diffeomorphism $X \to X$, where the smooth structure is obtained by viewing $X$ as $\# (a-3kb)(S^2 \times S^2) \# bk(K3)$.}
\item[(iii)]{$f$ can not be realised by a smooth $\mathbb{Z}_{2k}$-action for any smooth structure on $X$.}
\end{itemize}
\end{proposition}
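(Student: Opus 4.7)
The plan is to treat (i) and (ii) by standard constructions and to focus the work on (iii), where Theorem~\ref{t:main} supplies the obstruction. For (i), I would realise $f$ by a locally linear equivariant connected sum. On each $S^2\times S^2$ summand take the smooth involution $(x,y)\mapsto (-y,-x)$ (with $-$ the antipodal map of $S^2$); it is orientation preserving, has nonempty fixed locus, and realises $\begin{pmatrix} 0 & -1 \\ -1 & 0 \end{pmatrix}$ on $H^2$. On each orbit of $2k$ copies of $-E_8$, let the generator $g$ of $\mathbb{Z}_{2k}$ permute the copies cyclically via the identity identification. These can be assembled by locally linear equivariant connected sum; since $k$ is odd, $g$ and $g^k$ agree on the $S^2\times S^2$ part, and the attaching data can be made consistent. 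For (ii), Freedman's classification identifies $X$ homeomorphically with $Y:=\#(a-3kb)(S^2\times S^2)\#kb(K3)$ (using $K3\cong\#3(S^2\times S^2)\#2(-E_8)$ topologically); since $a-3kb\geq 1$, the smooth manifold $Y$ contains an $S^2\times S^2$ summand, and one appeals to Wall-type realisation theorems for simply-connected smooth $4$-manifolds with an $S^2\times S^2$ summand to realise $f$ by an orientation-preserving diffeomorphism of $Y$.

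For (iii), I would argue by contradiction using Theorem~\ref{t:main}. Suppose $f$ is realised by a smooth $\mathbb{Z}_{2k}$-action on $X$ for some smooth structure. Since $a$ is odd, $\mathbb{Z}_{2k}$ acts freely on $S^a\subset\mathbb{C}^{(a+1)/2}$ by coordinatewise multiplication by $e^{\pi i/k}$; set $B=S^a/\mathbb{Z}_{2k}$ and $\mathbb{X}=X\times_{\mathbb{Z}_{2k}} S^a\to B$, a smooth fibrewise oriented family with $\dim B=a=b^+(X)$. The form $aH\oplus 2kb(-E_8)$ is even, so $0$ is characteristic, and since $H^2(X;\mathbb{Z})$ is torsion-free there is a unique Spin$^c$-structure $\Gamma$ with $c(\Gamma)=0$; this $\Gamma$ is automatically monodromy invariant and satisfies $c(\Gamma)^2=0>-16kb=\sigma(X)$. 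From $a>3kb\geq 3$ with $a$ odd we have $a\geq 5$, so the lens space cohomology gives $H^3(B;\mathbb{Z})=0$ and condition~(2) of Theorem~\ref{t:main} holds. The theorem therefore forces $w_a(H^+(\mathbb{X}/B))=0$.

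The heart of the argument is to contradict this by computing $w_a(H^+(\mathbb{X}/B))$ directly. The positive-definite line in each $H\otimes\mathbb{R}$ is spanned by $(1,1)$, on which $\begin{pmatrix} 0 & -1 \\ -1 & 0 \end{pmatrix}$ acts by $-1$, while $-E_8$ contributes nothing to $H^+$; thus $H^+(X)\cong\mathbb{R}^a$ with $\mathbb{Z}_{2k}$ acting as $-I$, factoring through the unique nontrivial surjection $\phi:\mathbb{Z}_{2k}\twoheadrightarrow\mathbb{Z}_2$. Hence $H^+(\mathbb{X}/B)\cong L^{\oplus a}$ where $L$ is the real line bundle on $B$ classified by $\phi$, and $w_a(H^+(\mathbb{X}/B))=w_1(L)^a=\alpha^a$ with $\alpha\in H^1(B;\mathbb{F}_2)$ nonzero. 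The oddness of $k$ enters decisively: the order-$2$ subgroup $\langle g^k\rangle\subset\mathbb{Z}_{2k}$ yields an intermediate $k$-fold cover $\mathbb{RP}^a\to B$, and since $k$ is odd the transfer argument (the composition $\mathrm{tr}\circ\mathrm{res}$ equals multiplication by $k$, a unit mod $2$) shows the pullback $H^*(B;\mathbb{F}_2)\hookrightarrow H^*(\mathbb{RP}^a;\mathbb{F}_2)$ is injective. Moreover, the composition $\pi_1(\mathbb{RP}^a)=\mathbb{Z}_2\to\pi_1(B)=\mathbb{Z}_{2k}\xrightarrow{\phi}\mathbb{Z}_2$ sends $1\mapsto g^k\mapsto k\bmod 2=1$, i.e.\ is the identity (again using $k$ odd), so $L$ pulls back to the tautological line bundle on $\mathbb{RP}^a$ and $\alpha^a$ pulls back to the top class of $\mathbb{RP}^a$, which is nonzero. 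This contradicts the conclusion of Theorem~\ref{t:main} and completes the proof. The main obstacle is verifying the hypotheses of the main theorem and organising the transfer computation so that the parity hypothesis on $k$ plays its role---everything else is bookkeeping.
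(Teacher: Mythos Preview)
Your argument for (iii) is correct and essentially the same as the paper's: both build the family over the lens space $B=S^a/\mathbb{Z}_{2k}$, take the Spin$^c$-structure with $c=0$, and invoke Theorem~\ref{t:main} (the paper packages this as Proposition~\ref{p:cyclic}). Your transfer argument showing $\alpha^a\neq 0$ via the odd-degree cover $\mathbb{RP}^a\to B$ is a clean alternative to the paper's direct use of the relation $\alpha^2=\beta$ in $H^*(L^{2u+1};\mathbb{Z}_2)$ when $k$ is odd; the two are equivalent and both rest on the parity of $k$. Your observation that $a\ge 5$ automatically, so $H^3(B;\mathbb{Z})=0$ and the $d=3$ workaround is unnecessary, is a nice simplification. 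Part (ii) is handled identically in both, via Wall's theorem.

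The gap is in (i). Your $\mathbb{Z}_{2k}$-action on each $S^2\times S^2$ is the involution $(x,y)\mapsto(-y,-x)$, so the action on $\#a(S^2\times S^2)$ factors through the quotient $\mathbb{Z}_{2k}\twoheadrightarrow\mathbb{Z}_2$. Every point therefore has stabiliser containing the kernel $\mathbb{Z}_k$, and there are no free $\mathbb{Z}_{2k}$-orbits of size $2k$ at which to attach the $-E_8$ summands as a single $2k$-cycle. The remark that ``$g$ and $g^k$ agree on the $S^2\times S^2$ part'' is true but does not resolve this; nor can one easily interpolate through an auxiliary $S^4$, since the tangent representations at points with stabiliser $\mathbb{Z}_k$ will not match. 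The paper's construction is genuinely more delicate: it uses two different \emph{order-$2k$} actions $\iota_1,\iota_2$ on $S^2\times S^2$ (built from dihedral and rotational symmetries of $S^2$) whose induced action on $H$ is still $\bigl(\begin{smallmatrix}0&-1\\-1&0\end{smallmatrix}\bigr)$, computes the tangent representations at their isolated fixed points to be $\mathbb{C}_1\oplus\mathbb{C}_{-1+k}$ and $\mathbb{C}_1\oplus\mathbb{C}_{1+k}$ respectively, and alternates $\iota_1$ and $\iota_2$ along the chain of connected sums so that these match under orientation reversal. The resulting $\mathbb{Z}_{2k}$-action on $\#a(S^2\times S^2)$ is then genuinely of order $2k$ and has free orbits where the $-E_8$'s can be attached.
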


\begin{proposition}
Let $X$ be the topological $4$-manifold $X = \# 2 \mathbb{CP}^2 \# 11 \overline{\mathbb{CP}^2}$. There exists a commuting pair of isometries $f_1 , f_2 : H^2(X , \mathbb{Z}) \to H^2(X , \mathbb{Z})$ such that:
\begin{itemize}
\item[(i)]{$f_1$ and $f_2$ can be realised as diffeomorphisms of $X$ with respect to its standard smooth structure.}
\item[(ii)]{For any smooth structure on $X$, any diffeomorphisms realising $f_1$ and $f_2$ do not commute.}
\end{itemize}
\end{proposition}

\begin{proposition}
Let $X$ be the topological $4$-manifold $X = \# 2(a+b) \mathbb{CP}^2 \# (2a+2b+32c+1) \overline{\mathbb{CP}^2}$, where $a,b,c$ are positive integers such that $a,b \ge 3c$. There exists a commuting pair of involutive isometries $\phi_1 , \phi_2 : H^2(X , \mathbb{Z}) \to H^2(X , \mathbb{Z})$ with the following properties:
\begin{itemize}
\item[(i)]{$\phi_1,\phi_2$, can be realised by a continuous, locally linear $\mathbb{Z}_2 \times \mathbb{Z}_2$-action on $X$.}
\item[(ii)]{Viewing $X$ as the smooth $4$-manifold $X = \# (2a+2b-6c) S^2 \times S^2 \# 2c (K3) \# \overline{\mathbb{CP}^2}$, we have that $\phi_1$ and $\phi_2$ can be realised as smooth involutions on $X$.}
\item[(iii)]{For any smooth structure on $X$, we have that $\phi_1$ and $\phi_2$ can not be realised as commuting smooth involutions.}
\end{itemize}
\end{proposition}

A brief summary of the contents of the paper is as follows. In Section \ref{sec:fsw} we consider the Seiberg-Witten equations for families of $4$-manifolds. We consider in particular in  \ref{sec:fspinc} the obstructions to finding a Spin$^c$-structure on the vertical tangent bundle of a family of $4$-manifolds compatible with a given Spin$^c$-structure on the $4$-manifold. In \ref{sec:fmoduli} we show how the families moduli space can be constructed even in cases where no such Spin$^c$-structure exists. In \ref{sec:freducibles} we study the local structure of the families moduli space around reducible solutions, by computing the tangent and obstruction spaces for the Kuranishi model. In \ref{sec:vanishing} we prove Theorem \ref{t:hplusobstruction}, which serves as the basis for our obstructions to smooth group actions on $4$-manifolds. In Sections \ref{sec:z2}-\ref{sec:z2z2}, we consider obstructions for the following specific classes of groups, namely $\mathbb{Z}_2$ (\ref{sec:z2}), cyclic groups of even order (\ref{sec:z2m}), free abelian groups (\ref{sec:freeab}) and $\mathbb{Z}_2 \times \mathbb{Z}_2$ (\ref{sec:z2z2}). Note that the applications of Theorem \ref{t:hplusobstruction} are certainly not limited to just these groups. Finally in Section \ref{sec:appl} we consider a number of specific applications and examples of our obstruction results obtained in the previous sections.\\

\noindent{\bf Acknowledgments}. I would like to thank Diarmuid Crowley, Hokuto Konno and Mathai Varghese for many helpful conversations. In particular I thank Hokuto Konno for pointing out the definition of the group $\mathcal{G}_0$ in Section \ref{sec:fmoduli}. D. Baraglia is financially supported by the Australian Research Council Discovery Early Career Researcher Award DE160100024 and Australian Research Council Discovery Project DP170101054.

\section{Families Seiberg-Witten moduli spaces}\label{sec:fsw}

Let $X$ be a compact, oriented, smooth $4$-manifold. Let $\pi : \mathbb{X} \to B$ be a smooth, locally trivial fibre bundle with fibres diffeomorphic to $X$ and with base a smooth compact manifold $B$ of dimension $d$. For $b \in B$, we write $X_b$ for the fibre of $\mathbb{X}$ over $b$. We wish to study the Seiberg-Witten equations for the family of $4$-manifolds $\mathbb{X}$ parametrised by $B$. To define the ordinary Seiberg-Witten equations on $X$ requires a metric, a self-dual $2$-form perturbation and a Spin$^c$-structure on $X$. In the families setting we require parametrised versions of these. Let $T(\mathbb{X}/B) = Ker(\pi_*)$ denote the vertical tangent bundle, whose restriction to $X_b$ is the usual tangent bundle. A fibrewise metric for the family $\mathbb{X}/B$ is a metric $g$ on $T(\mathbb{X}/B)$. Similarly a fibrewise self-dual $2$-form for $\mathbb{X}/B$ is a section $\mu$ of $\wedge^2_+ T(\mathbb{X}/B)^* $. Note that in order to define the bundle $\wedge^2_+ T(\mathbb{X}/B)^* $ of fibrewise self-dual $2$-forms it is necessary to assume that $\mathbb{X}/B$ is fibrewise oriented. Henceforth we will always assume that our family $\mathbb{X}/B$ is equipped with a fibrewise orientation. However the base $B$ is not assumed to be orientable. A fibrewise Spin$^c$-structure for the family $\mathbb{X}/B$ is by definition a Spin$^c$-structure on $T(\mathbb{X}/B)$. By considering the notion of twisted Spin$^c$-structures, we will see that it is still possible to construct families Seiberg-Witten moduli spaces even when the bundle $T(\mathbb{X}/B)$ is not Spin$^c$.

\subsection{Spin$^c$-structures in families}\label{sec:fspinc}

Let $\mathcal{S}(X)$ denote the set of Spin$^c$-structures on $X$ compatible with the chosen orientation. A priori this set depends on the choice of metric, but since the space of metrics on $X$ is contractible there is a uniquely determined identification between the Spin$^c$-structures associated to any pair of metrics. As is well known, any smooth oriented $4$-manifold has a Spin$^c$-structure and the set $\mathcal{S}(X)$ is in a natural way a torsor for the group of complex line bundles on $X$, which is isomorphic to $H^2(X , \mathbb{Z})$. We denote the action of a line bundle $L$ on $\mathcal{S}(X)$ by $\Gamma \mapsto \Gamma \otimes L$, where $\Gamma \in \mathcal{S}(X)$. The orientation preserving diffeomorphisms of $X$ act on $\mathcal{S}(X)$ via pullback (and the canonical identification of Spin$^c$-structures for different metrics). Clearly this pullback action is compatible with the $H^2(X , \mathbb{Z})$-torsor structure in the sense that if $f : X \to X$ is a diffeomorphism, $\Gamma \in \mathcal{S}(X)$ and $L \in H^2(X , \mathbb{Z})$, then $f^*( \Gamma \otimes L) = f^*(\Gamma) \otimes f^*(L)$. More generally, we say that a bijection $\phi : \mathcal{S}(X) \to \mathcal{S}(X)$ is an {\em affine} transformation if there exists a group automorphism $\psi : H^2(X , \mathbb{Z}) \to H^2(X , \mathbb{Z})$ such that for all $\Gamma \in \mathcal{S}(X)$ and $L \in H^2(X , \mathbb{Z})$, we have
\[
\phi( \Gamma \otimes L ) = \phi(\Gamma) \otimes \psi(L).
\]
We call $\psi$ the linear part of $\phi$. In particular, a diffeomorphism $f : X \to X$ acts on $\mathcal{S}(X)$ as an affine transformation with linear part given by the usual pullback action of $f$ on $H^2(X , \mathbb{Z})$. Observe also that the action of $f$ on $\mathcal{S}(X)$ depends only on $f$ up to isotopy.\\

Consider again the family $\pi : \mathbb{X} \to B$. The collection of Spin$^c$-structures $\{ \mathcal{S}(X_b) \}_{b \in B}$ for each $b \in B$ forms a flat bundle over $B$, which we denote by $\mathcal{S}(\mathbb{X}/B) \to B$. The monodromy of this flat bundle is given by a representation $\rho : \pi_1(B) \to Aff( \mathcal{S}(X) )$ of $\pi_1(B)$ on $\mathcal{S}(X)$ acting by affine transformations (to obtain the monodromy representation, one chooses a basepoint $0 \in B$ and a diffeomorphism $X_0 \cong X$). Similarly, for any abelian group $A$, the collection of cohomology groups $\{ H^i(X_b , A ) \}_{b \in B}$ forms a flat bundle over $B$, namely the local system $R^i \pi_* A$. We  observe that that linear part of $\rho$ is the monodromy representation $\pi_1(B) \to Aut( H^2( X , \mathbb{Z} ) )$ corresponding to the local system $R^2 \pi_* \mathbb{Z}$.\\

Suppose that $\widetilde{\Gamma}$ is a families Spin$^c$-structure on $\mathbb{X}/B$, that is, suppose $\widetilde{\Gamma}$ is a Spin$^c$-structure on $T(\mathbb{X}/B)$. The restriction $\Gamma_b = \widetilde{\Gamma}|_{X_b}$ of $\widetilde{\Gamma}$ to each fibre $X_b$ determines a global section of the bundle $\mathcal{S}(\mathbb{X}/B)$, which in turn corresponds to a monodromy invariant Spin$^c$-structure $\Gamma \in \mathcal{S}(X)$. Thus a necessary condition for the existence of a families $Spin^c$-structure is that there exists a monodromy invariant Spin$^c$-structure on $X$. The following proposition shows that in certain cases this is also a sufficient condition:

\begin{proposition}\label{p:famspinc}
Let $\Gamma \in \mathcal{S}(X)$ be a monodromy invariant Spin$^c$-structure on $X$. Suppose that $b_1(X) = 0$ and $H^3( B , \mathbb{Z} )=0$. Then there exists a families Spin$^c$-structure $\widetilde{\Gamma}$ on $\mathbb{X}/B$ whose restriction to $X_0 \cong X$ is $\Gamma$. If $\Gamma_1 , \Gamma_2$ are two families Spin$^c$-structures on $\mathbb{X}/B$ whose restrictions to $X_0$ agree, then $\Gamma_2 = \Gamma_1 \otimes \pi^*(M)$ for some line bundle $M \in H^2(B , \mathbb{Z})$.
\end{proposition}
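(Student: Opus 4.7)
For uniqueness, $\mathcal{S}(T(\mathbb{X}/B))$ is a torsor for $H^2(\mathbb{X}, \mathbb{Z})$, so $\widetilde{\Gamma}_2 = \widetilde{\Gamma}_1 \otimes L$ for a unique $L \in H^2(\mathbb{X}, \mathbb{Z})$, and the agreement of restrictions to $X_0$ translates to $L|_{X_0} = 0$. My plan is to apply the Leray--Serre spectral sequence $E_2^{p,q} = H^p(B, R^q\pi_*\mathbb{Z}) \Rightarrow H^{p+q}(\mathbb{X}, \mathbb{Z})$. The hypothesis $b_1(X) = 0$ gives $R^1\pi_*\mathbb{Z} = 0$ (since $H^1(X,\mathbb{Z})$ is always torsion-free), and the hypothesis $H^3(B, \mathbb{Z}) = 0$ implies $E_3^{3,0} = 0$ so that the differential $d_3 : E_3^{0,2} \to E_3^{3,0}$ vanishes. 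The relevant portion of the spectral sequence then collapses to the short exact sequence
\[
0 \to H^2(B, \mathbb{Z}) \xrightarrow{\pi^*} H^2(\mathbb{X}, \mathbb{Z}) \xrightarrow{e} H^0(B, R^2\pi_*\mathbb{Z}) \to 0,
\]
where $e$ is the edge map sending $L$ to the parallel section $b \mapsto L|_{X_b}$. Since parallel sections of a local system that vanish at one point vanish identically, $L|_{X_0} = 0$ forces $e(L) = 0$ and hence $L = \pi^* M$ for some $M \in H^2(B, \mathbb{Z})$.

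For existence, the plan is a \v{C}ech-style gluing argument. Monodromy invariance of $\Gamma$ provides a global section $s$ of $\mathcal{S}(\mathbb{X}/B)$ with $s(0) = \Gamma$. Choose a good cover $\{U_\alpha\}$ of $B$ with trivializations $\psi_\alpha : \mathbb{X}|_{U_\alpha} \cong U_\alpha \times X$; in each trivialization $s$ is locally constant, so pulling it back defines a Spin$^c$-structure $\Gamma_\alpha$ on $T(\mathbb{X}|_{U_\alpha}/U_\alpha)$ whose fiberwise isomorphism class equals $s$. On each overlap $U_{\alpha\beta}$, the two local Spin$^c$-structures $\Gamma_\alpha, \Gamma_\beta$ have equal fiberwise iso class, hence are isomorphic as Spin$^c$-structures over $U_{\alpha\beta}$, with isomorphisms forming a torsor for the gauge group $C^\infty(X, U(1))$. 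Since $H^1(X, \mathbb{Z}) = 0$ by $b_1(X) = 0$, this group is connected and homotopy equivalent to its constant subgroup $U(1)$, so principal bundles for it over the contractible $U_{\alpha\beta}$ are trivial and we may choose a continuous family of isomorphisms $\sigma_{\alpha\beta}$.

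On triple overlaps, $\sigma_{\gamma\alpha} \sigma_{\beta\gamma} \sigma_{\alpha\beta}$ is a central $U(1)$-valued automorphism of $\Gamma_\alpha|_{U_{\alpha\beta\gamma}}$, giving a \v{C}ech 2-cocycle valued in the sheaf $\mathcal{G}$ of gauge transformations. The hypothesis $b_1(X) = 0$ ensures the exponential sequence of sheaves
\[
0 \to \underline{\mathbb{Z}} \to \underline{C(\cdot \times X, \mathbb{R})} \to \underline{C(\cdot \times X, U(1))} \to 0
\]
on $B$ is exact (local surjectivity of the right map requires precisely $H^1(X, \mathbb{Z}) = 0$), and softness of the middle term yields $\check{H}^2(B, \mathcal{G}) \cong H^3(B, \mathbb{Z}) = 0$. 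Hence the triple-overlap cocycle is a coboundary: adjusting each $\sigma_{\alpha\beta}$ by a $U(1)$-valued 1-cochain yields honest transition isomorphisms, and gluing produces the desired global families Spin$^c$-structure $\widetilde{\Gamma}$. Arranging the cover so that $0 \in U_{\alpha_0}$ with $\psi_{\alpha_0}|_{X_0} = \mathrm{id}_X$ ensures $\widetilde{\Gamma}|_{X_0} = s(0) = \Gamma$. The main technical obstacle I anticipate is carefully identifying the \v{C}ech cohomology of the gauge-transformation sheaf with $H^3(B, \mathbb{Z})$; this is exactly where $b_1(X) = 0$ enters in the existence part, through local surjectivity in the exponential sequence above.
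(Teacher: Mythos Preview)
Your uniqueness argument via Leray--Serre is exactly the paper's (which just says the last statement ``follows easily from the Leray--Serre spectral sequence''). For existence you take a genuinely different route. The paper argues that the obstruction $W_3(T(\mathbb{X}/B))\in H^3(\mathbb{X},\mathbb{Z})$ vanishes by filtering it through Leray--Serre: its image in $E_2^{0,3}$ dies since the fibers are Spin$^c$; its residual class in $E_2^{1,2}$ dies because the monodromy-invariant $\Gamma$ furnishes a Spin$^c$-structure over the $1$-skeleton of $B$; and $E_2^{2,1}=0$ (from $b_1(X)=0$) together with $E_2^{3,0}=H^3(B,\mathbb{Z})=0$ finishes it. Having produced \emph{some} families Spin$^c$-structure, the paper then twists by a line bundle---extending the monodromy-invariant difference class from $X$ to $\mathbb{X}$ via a second pass through the spectral sequence---to arrange the correct restriction. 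Your \v{C}ech gluing, by contrast, builds $\widetilde{\Gamma}$ with the correct fiberwise restriction from the outset and locates the gluing obstruction directly in $\check{H}^2(B,\mathcal{G})\cong H^3(B,\mathbb{Z})$ via the pushed-forward exponential sequence; this is correct (softness of $\pi_*\mathcal{C}^\infty_{\mathbb{X}}$ as a $\mathcal{C}^\infty_B$-module gives the vanishing you need), and is essentially the gerbe viewpoint the paper invokes later in \S\ref{sec:fmoduli}. The paper's approach is slicker in reducing everything to a single known characteristic class; yours is more constructive and avoids the two-step ``find then correct'' manoeuvre. One small imprecision: the $1$-cochain you adjust by at the end should be $\mathcal{G}$-valued (gauge transformations), not merely constant-$U(1)$-valued.
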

\begin{proof}
First we will show that under the above assumptions, there exists a families Spin$^c$-structure on $\mathbb{X}/B$ which does not necessarily restrict to $\Gamma$ on $X_0$. We will then see that it is possible to change this Spin$^c$-structure to one that does restrict to $\Gamma$ on $X_0$.\\

The obstruction to the existence of a Spin$^c$-structure on $T(\mathbb{X}/B)$ is the third integral Stiefel-Whitney class $W_3( T(\mathbb{X}/B) ) \in H^3( \mathbb{X} , \mathbb{Z} )$. Consider the $E_2$-page of the Leray-Serre spectral sequence for the cohomology of $\mathbb{X}$ with integral coefficients: $E_2^{p,q} = H^p( B , R^q \pi_* \mathbb{Z})$. The restriction of $W_3( T(\mathbb{X}/B) )$ to the fibres gives a class in $E_2^{0,3}$, however this class vanishes, since the fibres of $\mathbb{X}/B$ are Spin$^c$ and so their integral third Stiefel-Whitney class is zero. It follows that $W_3( T(\mathbb{X}/B) )$ determines a class in $E_2^{1,2} = H^1( B , R^q \pi_* \mathbb{Z} )$. This class is the obstruction to the existence of a Spin$^c$-structure on the restriction of $\mathbb{X} \to B$ to a $1$-skeleton of $B$. The existence of a monodromy invariant Spin$^c$-structure $\Gamma$ on $X$ certainly allows us to construct a Spin$^c$-structure on the restriction of the family to a $1$-skeleton of $B$ (the monodromy invariance of $\Gamma$ ensures that each time we attach a $1$-cell, $\Gamma$ can be extended over it). Therefore, the class of $W_3( T(\mathbb{X}/B) )$ in $E_2^{1,2}$ vanishes. Next, observe that $b_1(X) = 0$ implies $H^1(X , \mathbb{Z}) = 0$ and thus $E_2^{2,1} = 0$. It follows that $W_3( T(\mathbb{X}/B) )$ is the pullback of a class in $E_2^{3,0} = H^3( B , \mathbb{Z})$. By assumption $H^3(B , \mathbb{Z}) = 0$ and so $W_3( T(\mathbb{X}/B) ) = 0$. Hence there exists a Spin$^c$-structure on $T(\mathbb{X}/B)$.\\

Let $\widetilde{\Gamma}$ be a Spin$^c$-structure on $T(\mathbb{X}/B)$. The fibrewise restriction of $\widetilde{\Gamma}$ determines a monodromy invariant Spin$^c$-structure $\Gamma' \in \mathcal{S}(X)$. Let $L \in H^2(X , \mathbb{Z}))$ be defined by $\Gamma = \Gamma' \otimes L$. From this it follows that $L$ is monodromy invariant as an element of $H^2(X , \mathbb{Z})$. We will show that there exists a line bundle $\widetilde{L}$ on $\mathbb{X}$ whose fibrewise restriction is $L$. In this case, $\widetilde{\Gamma} \otimes \widetilde{L}$ is a Spin$^c$-structure on $T(\mathbb{X}/B)$ whose fibrewise restriction is $\Gamma$, as required. To show the existence of such a line bundle, we again turn to the Leray-Serre spectral sequence $E_2^{p,q}$. The monodromy invariant line bundle $L$ determines a class $L \in E_2^{0,2}$. Let $d_2, d_3, \dots $ denote the differentials of the spectral sequence. The first obstruction to extending this to a line bundle on $\mathbb{X}$ is given by $d_2( L) \in E_2^{2,1}$. However, recall that we have assumed $b_1(X) = 0$ so that $E_2^{2,1} = 0$. Thus $d_2(L) = 0$ and $L$ determines a class $L \in E_3^{0,2}$ on the $E_3$-page. The second and final obstruction to extending $L$ to a line bundle on $\mathbb{X}$ is given by $d_3(L) \in E_3^{3,0}$. Note that since $E_2^{1,1} = 0$ (again, because $b_1(X) = 0$) we have $E_3^{3,0} \cong E_2^{3,0} = H^3(B , \mathbb{Z})$, which is zero by assumption. Thus $L$ extends to a line bundle on $\mathbb{X}$. The final statement of the proposition also follows easily from the Leray-Serre spectral sequence.
\end{proof}

If $\Gamma \in \mathcal{S}(X)$ is a monodromy invariant Spin$^c$-structure, then its characteristic class $c(\Gamma) \in H^2( X , \mathbb{Z})$ is also monodromy invariant. The following proposition gives conditions under which the converse holds:
\begin{proposition}
Let $c \in H^2( X , \mathbb{Z})$ be a monodromy invariant lift of $w_2(TX)$. Let $H^2(X , \mathbb{Z} )_2$ denote the $2$-torsion subgroup of $H^2(X , \mathbb{Z})$ and observe that $\pi_1(B)$ acts on $H^2( X , \mathbb{Z} )_2$ by monodromy. If $H^1( \pi_1(B) , H^2( X , \mathbb{Z} )_2 ) = 0$ (for instance, if $H^2(X , \mathbb{Z})$ has no $2$-torsion) then there exists a monodromy invariant Spin$^c$ structure $\Gamma \in \mathcal{S}(X)$ such that $c = c(\Gamma)$. Any two such Spin$^c$-structures differ by a monodromy invariant line bundle of order $2$.
\end{proposition}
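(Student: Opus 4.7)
The plan is to set up a standard cocycle argument using the fact that $\mathcal{S}(X)$ is a torsor for $H^2(X,\mathbb{Z})$ and that the characteristic class map $c : \mathcal{S}(X) \to H^2(X,\mathbb{Z})$ satisfies $c(\Gamma \otimes L) = c(\Gamma) + 2L$. Two Spin$^c$-structures $\Gamma, \Gamma'$ share the same characteristic class if and only if they differ by tensoring with an element of $H^2(X,\mathbb{Z})_2$. Thus the fiber of $c$ over the given class $c$ is a (nonempty, because $c$ lifts $w_2(TX)$) torsor for $H^2(X,\mathbb{Z})_2$, and the $\pi_1(B)$-action on $\mathcal{S}(X)$ restricts to a $\pi_1(B)$-action on this fiber compatible with the action of $H^2(X,\mathbb{Z})_2$ by translation.

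The first step is to choose any $\Gamma_0 \in \mathcal{S}(X)$ with $c(\Gamma_0) = c$. For $g \in G := \pi_1(B)$, the monodromy invariance of $c$ gives $c(g\cdot \Gamma_0) = c$, so there is a unique element $\alpha(g) \in H^2(X,\mathbb{Z})_2$ with
\[
g \cdot \Gamma_0 = \Gamma_0 \otimes \alpha(g).
\]
Using that the $G$-action is by affine transformations whose linear parts act on $H^2(X,\mathbb{Z})$ by the monodromy representation, I would verify the twisted cocycle identity $\alpha(gh) = \alpha(g) + g\cdot \alpha(h)$ directly from $(gh)\cdot \Gamma_0 = g\cdot(h\cdot \Gamma_0)$. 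Hence $\alpha$ represents a class $[\alpha] \in H^1(G, H^2(X,\mathbb{Z})_2)$.

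The next step is to observe that replacing $\Gamma_0$ by $\Gamma_0 \otimes L$ with $L \in H^2(X,\mathbb{Z})_2$ shifts the cocycle $\alpha$ by the coboundary $g \mapsto g\cdot L - L$; this uses the compatibility $g\cdot(\Gamma_0\otimes L) = (g\cdot\Gamma_0)\otimes(g\cdot L)$. Thus $\Gamma_0$ can be modified to a monodromy invariant Spin$^c$-structure with the prescribed characteristic class precisely when $[\alpha] = 0$ in $H^1(G, H^2(X,\mathbb{Z})_2)$, which is exactly the hypothesis. For the uniqueness statement, two Spin$^c$-structures $\Gamma, \Gamma'$ with $c(\Gamma) = c(\Gamma')$ differ by $\Gamma' = \Gamma \otimes L$ for some $L \in H^2(X,\mathbb{Z})_2$, and if both are $G$-invariant then comparing $g\cdot\Gamma'$ with $\Gamma'$ forces $g\cdot L = L$, so $L$ is a monodromy invariant line bundle of order $2$.

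There is no serious obstacle here; the only care needed is to keep the twisted (nonabelian) cocycle condition consistent with the torsor action — i.e., to track that translations in $H^2(X,\mathbb{Z})_2$ interact with the monodromy action through the linear part of the affine action on $\mathcal{S}(X)$. Once this is set up correctly, the vanishing of $H^1(G, H^2(X,\mathbb{Z})_2)$ handles both existence and the structure of the ambiguity.
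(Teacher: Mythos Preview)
Your proposal is correct and follows essentially the same approach as the paper: choose a Spin$^c$-structure $\Gamma_0$ with $c(\Gamma_0)=c$, define the cocycle $\alpha(g)\in H^2(X,\mathbb{Z})_2$ by $g\cdot\Gamma_0=\Gamma_0\otimes\alpha(g)$, verify the twisted cocycle condition, and observe that changing $\Gamma_0$ by an element of $H^2(X,\mathbb{Z})_2$ changes $\alpha$ by a coboundary, so the vanishing of $H^1(\pi_1(B),H^2(X,\mathbb{Z})_2)$ gives existence; the uniqueness argument is likewise identical. If anything, your write-up is slightly more explicit about why the coboundary computation works.
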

\begin{proof}
Since $c$ is a lift of $w_2(TX)$, there exists a Spin$^c$-structure $\Gamma' \in \mathcal{S}(X)$ such that $c = c(\Gamma')$. In general $\Gamma'$ will fail to be monodromy invariant. However, for each $g \in \pi_1(B)$ we have $g^*( \Gamma') = \Gamma' \otimes L(g)$ for some $L(g) \in H^2(X , \mathbb{Z})$. Since $c$ is monodromy invariant, we have $c(\Gamma') = c(\Gamma' \otimes L(g))$ and thus $L(g)^2 = 1$, the trivial line bundle. Thus $L(g) \in H^2(X , \mathbb{Z})_2$. In this way, we obtain a map $L : \pi_1(B) \to H^2(X , \mathbb{Z})_2$ and it is easily checked that $L$ satisfies the group cocycle condition $L(gh) = L(g) \otimes g^*( L(h) )$. Clearly the cohomology class of this cocycle in $H^1( \pi_1(B) , H^2(X , \mathbb{Z})_2)$ is the obstruction to finding a monodromy invariant lift of $c$ to a Spin$^c$-structure. In particular, if $H^1( \pi_1(B) , H^2(X , \mathbb{Z})_2) = 0$ then a monodromy invariant Spin$^c$-structure $\Gamma \in \mathcal{S}(X)$ with $c = c(\Gamma)$ exists. Let $\Gamma, \hat{\Gamma}$ be two monodromy invariant Spin$^c$-structures with $c = c(\Gamma) = c(\hat{\Gamma})$. Then $\hat{\Gamma} = \Gamma \otimes L$ for some line bundle $L$. Clearly $L$ must be monodromy invariant, since $\Gamma$ and $\hat{\Gamma}$ are. Moreover $L$ has order $2$ since $c(\Gamma) = c(\hat{\Gamma})$.
\end{proof}

\subsection{The families moduli space}\label{sec:fmoduli}

Suppose that we have a family $\mathbb{X}/B$ equipped with fibrewise metric and perturbation, and that we are given a Spin$^c$-structure on $T(\mathbb{X}/B)$. The moduli space of gauge equivalence classes of solutions to the Seiberg-Witten equations for this family with respect to the given data (metrics, perturbations, Spin$^c$-structure) defines a compact Hausdorff topological space $\mathcal{M}$ together with a continuous map $\mathcal{M} \to B$. For generic metrics and perturbations the families moduli space is a smooth manifold away from reducible solutions. We will return to this point later, but for the now we do not make any smoothness assumptions on $\mathcal{M}$. Now suppose that we have a family $\mathbb{X}/B$ equipped with fibrewise metric and perturbation and suppose that $\Gamma \in \mathcal{S}(X)$ is a monodromy invariant Spin$^c$-structure which does not necessarily extend to a Spin$^c$-structure on $T(\mathbb{X}/B)$. Let $\{ U_\alpha \}_{\alpha \in I}$ be an open cover of $B$ such that for each $\alpha \in I$ the restriction $\mathbb{X}_\alpha = \pi^{-1}(U_\alpha)$ of the family to $U_\alpha$ is trivial. The monodromy invariant Spin$^c$-structure $\Gamma$ can be viewed as a section of the bundle $\mathcal{S}(\mathbb{X}/B)$ of Spin$^c$-structures on each fibre and thus determines a Spin$^c$-structure on each fibre. There is no obstruction to extending this to a Spin$^c$-structure on $T(\mathbb{X}_\alpha/B)$, the restriction of the vertical tangent bundle over $U_\alpha$. Thus we may construct a families Seiberg-Witten moduli space $\mathcal{M}_\alpha \to U_\alpha$ over each $U_\alpha$.\\

Consider the restriction of the families $\mathbb{X}_\alpha / B$, $\mathbb{X}_\beta/B$ over $\pi^{-1}(U_\alpha \cap U_\beta)$. Since the locally defined Spin$^c$-structures are obtained from a global section $\Gamma : B \to \mathcal{S}(\mathbb{X}/B)$, it follows that the Spin$^c$-structures on $T(\mathbb{X}_\alpha/B)$ and $T(\mathbb{X}_\beta/B)$ are isomorphic on $U_\alpha \cap U_\beta$. Let $\mathbb{S}_\alpha \to \mathbb{X}_\alpha$ denote the spinor bundle associated to the Spin$^c$-structure on $T(\mathbb{X}_\alpha/B)$. Over $U_\alpha \cap U_\beta$ we obtain an isomorphism of Spin$^c$-structures $\varphi_{\alpha \beta} : \mathbb{S}_\beta \to \mathbb{S}_\alpha$. If the isomorphisms $\{ \varphi_{\alpha \beta} \}$ satisfy the cocycle condition then the locally defined Spin$^c$-bundles can be patched together to form a globally defined Spin$^c$-bundle, giving a Spin$^c$-structure on $T(\mathbb{X}/B)$. In general, the $\varphi_{\alpha \beta}$ only form a cocycle up to an intertwiner of the Clifford action, that is up to a $U(1)$-valued function. Stated differently, let $Diff(X,\Gamma)$ be the group of orientation preserving diffeomorphisms of $X$ which preserve the isomorphism class of $\Gamma$ and let $Aut(X,\Gamma)$ be the group consisting of pairs $(f , \varphi)$, where $f \in Diff(X,\Gamma)$ and $\varphi : f^*(S)  \to S$ is an isomorphism of Spin$^c$-structures. Then we have a short exact sequence \cite{na3,konno2}:
\[
1 \to \mathcal{G} \to Aut(X,\Gamma) \to Diff(X,\Gamma) \to 1.
\]
The family $\mathbb{X} \to B$ is constructed from transition functions $f_{\alpha \beta}$ valued in $Diff(X,\Gamma)$ and the $\varphi_{\alpha \beta}$ are a choice of lifts of $f_{\alpha \beta}$ to  $Aut(X,\Gamma)$. In general the lifts fail to be a cocycle, and thus on the triple overlap $U_\alpha \cap U_\beta \cap U_\gamma$, there exists a $U(1)$-valued function $g_{\alpha \beta \gamma}$ such that $g_{\alpha \beta \gamma} = \varphi_{\alpha \beta} \varphi_{\beta \gamma} \varphi_{\gamma \alpha}$. From its definition we have that $\{ g_{\alpha \beta \gamma} \}$ is a $U(1)$-valued cocycle whose cohomology class in $H^2( \mathbb{X} , U(1) ) \cong H^3( \mathbb{X} , \mathbb{Z} )$ is $W_3( T(\mathbb{X}/B) )$. A more geometric point of view is to say that $\{ g_{\alpha \beta \gamma } \}$ defines a bundle gerbe on $\mathbb{X}$ and that the locally defined spinor bundles $\{ \mathbb{S}_\alpha \}$ gives a bundle gerbe module. In any case, the isomorphisms $\varphi_{\alpha \beta} : \mathbb{S}_\beta |_{U_\alpha \cap U_\beta} \to \mathbb{S}_\alpha |_{U_\alpha \cap U_\beta}$ induce homeomorphisms $\widehat{\varphi}_{\alpha \beta} : \mathcal{M}_\beta |_{U_\alpha \cap U_\beta} \to \mathcal{M}_\alpha |_{U_\alpha \cap U_\beta}$ between the corresponding moduli spaces. Moreover, since $g_{\alpha \beta \gamma} = \varphi_{\alpha \beta} \varphi_{\beta \gamma} \varphi_{\gamma \alpha}$ is given by a $U(1)$-gauge transformation, it follows that the induced homeomorphisms $\widehat{\varphi}_{\alpha \beta}$ satisfy the cocycle condition $\widehat{\varphi}_{\alpha \beta} \widehat{\varphi}_{\beta \gamma} \widehat{\varphi}_{\gamma \alpha} = 1$. Therefore the collection of local families moduli spaces $\{ \mathcal{M}_\alpha \}$ patch together to form a well-defined global families moduli space $\mathcal{M} \to B$. In summary, in order to construct a families moduli space $\mathcal{M} \to B$, it is sufficient to have a monodromy invariant Spin$^c$-structure $\Gamma \in \mathcal{S}(X)$. We do not require that $\Gamma$ extends to a Spin$^c$-structure on $T(\mathbb{X}/B)$.\\

In addition to the families moduli space, it will be necessary to consider a families configuration space and its quotient under the gauge group. First let us recall the unparametrised configuration space. Let $(g , \mu)$ be a pair consisting of a metric $g$ on $X$ and a $g$-self-dual $2$-form $\mu$ and let $\Gamma \in \mathcal{S}(X)$. We define the reduced configuration space $\mathcal{B}^*_{g,\mu}(\Gamma)$ to be the space of all pairs $(A , \varphi)$ where $A$ is a connection on the determinant line associated to $\Gamma$ and $\varphi$ is a section of the positive spinor bundle associated to $\Gamma$ which is not identically zero. The gauge group $\mathcal{G} = Map(X , S^1)$ acts freely on $\mathcal{B}_{g,\mu}^*(\Gamma)$ and we define the reduced configuration space modulo gauge transformations associated to $(g,\mu)$ and $\Gamma$ to be the quotient $\mathcal{C}_{g,\mu}^*(\Gamma) = \mathcal{B}_{g,\mu}^*(\Gamma)/\mathcal{G}$. We recall that if $b_1(X) = 0$, then $\mathcal{C}_{g,\mu}^*(\Gamma)$ has the homotopy type of $\mathbb{CP}^{\infty}$. Recall in this case that there is a naturally defined unitary line bundle $L \to \mathcal{C}^*_{g,\mu}(\Gamma)$ which represents a generator of $H^2( \mathbb{CP}^\infty , \mathbb{Z})$. To see this, let
\begin{equation}\label{equ:defg0}
\mathcal{G}_0 = \left\{ g \in \mathcal{G} \; | \; g(x) = e^{if(x)}, \; \text{for some } f : X \to \mathbb{R} \; \text{such that } \int_X f \, dvol_{X,g} = 0 \right\}.
\end{equation}
Note that since $b_1(X) = 0$, we have that every $g \in \mathcal{G}$ can be written as $g = e^{if}$ for some $f : X \to \mathbb{R}$. It follows that there is a natural split short exact sequence
\[
1 \to \mathcal{G}_0 \to \mathcal{G} \to S^1.
\]
 We now define $L$ as the unitary line bundle associated to the principal circle bundle $\mathcal{B}^*_{g,\mu}(\Gamma)/\mathcal{G}_0 \to \mathcal{B}^*_{g,\mu}(\Gamma)/\mathcal{G} = \mathcal{C}^*_{g,\mu}$.\\

We now consider a families version of the above construction. Thus assume we have a family $\pi : \mathbb{X} \to B$ equipped with a family $(g , \mu)$ of metrics and self-dual $2$-forms. Suppose that $\Gamma \in \mathcal{S}(X)$ is a monodromy invariant Spin$^c$-structure on $X$. First, let us suppose that $\Gamma$ extends to a Spin$^c$-structure on $\mathbb{X}$. Let $\mathbb{S}^+ \to \mathbb{X}$ be the associated positive spinor bundle of this Spin$^c$-structure. This has the property that the restriction of $\mathbb{S}^+$ to any fibre $X_b$ is the positive spinor bundle of $\Gamma_b$. We define the families reduced configuration space $\mathcal{B}^*_B(\Gamma)$ to be the space of all triples $(b , A , \varphi)$, where $b \in B$, $A$ is a connection on the restriction of the determinant line to $X_b$ and $\varphi$ is a section of $\mathbb{S}^+|_{X_b}$ which is not identically zero. The gauge group $\mathcal{G}$ acts freely on $\mathcal{B}^*_B(\Gamma)$ and we define the families reduced configuration space modulo gauge transformations $\mathcal{C}^*_B(\Gamma)$ to be the quotient $\mathcal{C}^*_B(\Gamma) = \mathcal{B}^*_B(\Gamma)/\mathcal{G}$ (strictly speaking, $\mathcal{G}$ is a bundle of groups over $B$ and can be thought of as a groupoid. Then $\mathcal{G}$ acts on $\mathcal{B}^*_B(\Gamma)$ in the sense of groupoid actions).\\

We have that $\mathcal{C}^*_B(\Gamma)$ is a locally trivial fibre bundle over $B$ whose fibre over $b \in B$ is the reduced configuration space modulo gauge transformations $\mathcal{C}^*_{g_b , \mu_b}(\Gamma_b)$. If $b_1(X) = 0$ this is a fibre bundle over $B$ whose fibres have the homotopy type of $\mathbb{CP}^\infty$. In this case, we claim that there is a line bundle $L_B \to \mathcal{C}^*_B(\Gamma)$ whose restriction to each fibre of the projection $\mathcal{C}^*_B(\Gamma) \to B$ represents a generator of $H^2( \mathbb{CP}^\infty, \mathbb{Z})$. In fact, this line bundle is constructed in exactly the same way as in the unparametrised case. To construct $L_B$ we just need to define for each $b \in B$ the subgroup $\mathcal{G}_0(X_b)$ of the gauge group $\mathcal{G}(X_b)$ for the fibre $X_b$. We not that since the fibre bundle $\mathbb{X} \to B$ is fibrewise oriented and equipped with a families metric $g = \{ g_b \}$, we obtain fibrewise volume forms $dvol_{X_b , g_b}$ and therefore we can repeat the definition (\ref{equ:defg0}) in families. Lastly, suppose that $\Gamma \in \mathcal{S}(X)$ is a monodromy invariant Spin$^c$-structure which does not necessarily extend to a Spin$^c$-structure on $T(\mathbb{X}/B)$. Then we can still define spaces $\mathcal{B}^*_{U_\alpha}(\Gamma)$ with respect to some open cover $\{ U_\alpha \}$ of $B$ and we can form their quotient spaces $\mathcal{C}^*_{U_\alpha}(\Gamma)$. Since we have divided out by gauge transformations, we have that the $\{ \mathcal{C}^*_{U_\alpha}(\Gamma) \}$ can be patched together to form a fibre bundle $\mathcal{C}^*_B(\Gamma) \to B$ whose fibre over $b \in B$ can be identified with $\mathcal{C}^*_{g_b , \mu_b}(\Gamma_b)$. However, our construction of the line bundle $L_B$ breaks down. Thus we can only assume $L_B$ exists if $\Gamma$ extends to a Spin$^c$-structure on $T(\mathbb{X}/B)$.

\subsection{Structure of the moduli space around reducibles}\label{sec:freducibles}

The structure of the families Seiberg-Witten moduli space can be analysed in much the same way as the ordinary Seiberg-Witten moduli space, as in \cite{nic,sal}. In proving that the usual Seiberg-Witten invariants are independent of the choice of metric and perturbation, one needs to consider a $1$-parameter families version of the Seiberg-Witten moduli space. The same techniques may be applied to the Seiberg-Witten equations parametrised by a smooth manifold $B$ of any dimension. In particular, the same techniques show that for any choice of families metric $g = \{ g_b \}_{b \in B}$ and for a generic families perturbation $\mu = \{ \mu_b \}_{b \in B}$, the families moduli space $\mathcal{M}(g_t,\mu_t) \to B$ is smooth away from reducible solutions. Here, generic means a subset of Baire second category with respect to the $\mathcal{C}^\infty$-topology.\\

Let $g = \{g_b\}_{b \in B}$ be a fibrewise metric on the family $\mathbb{X}/B$. This determines a rank $b^+(X)$ orthogonal vector bundle $H^+( \mathbb{X}/B)$ whose fibre over $b \in B$ is $H^+(X_b)$, the space of $g_b$-self-dual harmonic $2$-forms on $X_b$. This is a sub-bundle of the flat vector bundle $H^2( \mathbb{X}/B , \mathbb{R} ) = R^2 \pi_* \mathbb{R}$ whose fibre over $b$ is $H^2(X_b , \mathbb{R})$. We note that although $H^+( \mathbb{X}/B)$ depends on the choice of metric $g$ when considered as a sub-bundle of $H^2(\mathbb{X}/B)$, the underlying vector bundle $H^+(\mathbb{X}/B)$ is up to isomorphism independent of the choice of metric. This is because $H^+(\mathbb{X}/B)$ is a maximal positive definite subbundle of $H^2( \mathbb{X}/B , \mathbb{R} )$, but the space of all such maximal positive definite subbundles is contractible.\\

Let $\mu = \{ \mu_b \}_{b\in B}$ be a family of self-dual $2$-forms with respect to $g$, generic in sense described above. The $L^2$-orthogonal projection of $\mu_b$ to $H^+( X_b)$ determines a section $[\mu] : B \to H^+(\mathbb{X}/B)$. Similarly, the locus of perturbations for which the Seiberg-Witten equations admits a reducible solution defines a section $\mathcal{W} : B \to H^+(\mathbb{X}/B)$. We may additionally assume that $\mu$ is chosen sufficiently generically so that it intersects the locus $\mathcal{W}$ transversally. In the case that $b^+(X) > d = dim(B)$, transversality means that $[\mu]$ and $\mathcal{W}$ are disjoint.\\

We will be interested in the case $b^+(X) = d$, where transversality implies that $[\mu]$ and $\mathcal{W}$ intersect in a finite set of points, provided $B$ is compact. The intersection $[\mu] \cap \mathcal{W}$, counted mod $2$ is given by the $d$-th Stiefel Whitney class $w_d( H^+(\mathbb{X}/B) ) \in H^d(B , \mathbb{Z}_2) \cong \mathbb{Z}_2$. One can also consider the case $b^+(X) < d$, in which case if $[\mu]$ intersects $\mathcal{W}$ transversally, then their intersection is given by a smooth submanifold of $B$ of dimension $d - b^+(X)$. This submanifold is mod $2$ Poincar\'e dual to the Stiefel-Whitney class $w_{b^+(X)}( H^+( \mathbb{X}/B ) ) \in H^{b^+(X)}( B , \mathbb{Z}_2 )$.\\

We will need to examine the structure of the families moduli space around a reducible solution. Suppose $b \in B$ is such that $[\mu_b]$ intersects the wall $\mathcal{W}_b$. For simplicity we will consider the case that $b_1(X) = 0$, although this can be relaxed. It follows that there is a unique reducible point $(A,0)$ of the families moduli space lying over $b$. The reducible point is given by a connection $A$ on the determinant line bundle of $S_b$ and satisfying $F^+_A + i\mu_b = 0$. We would like to describe the Zariski tangent space and obstruction space at $(A,0)$ in the families moduli space. First consider the deformation complex for the unparameterised Seiberg-Witten equations on $X_b$ around the reducible point $(A, 0)$. We work with configurations of class $L^{k+1,p}$ for some integer $k \ge 0$ and real number $p > 1$. Then the deformation complex takes the form:
\[
0 \to L^{k+2,p}( X_b , i\mathbb{R}) \buildrel \partial_1 \over \longrightarrow L^{k+1,p}(X , iT^*X_b \oplus S^+_b ) \buildrel \partial_2 \over \longrightarrow L^{k,p}(X_b , i \wedge^2_+ T^*X_b \oplus S^-_b) \to 0,
\]
where $\partial_1( f ) = ( 2df  , 0 )$, $\partial_2( a , \psi ) = ( d^+a , D_A \psi )$. For $i=0,1,2$, let $H^i( A,0 )$ denote the cohomology groups of this complex. In particular, $H^1(A,0) \cong Ker(D_A)$ is the Zariski tangent space at $(A,0)$ in the unparametrised moduli space and $H^2(A,0) \cong H^+(X_b) \oplus Coker(D_A)$ is the obstruction space at $(A,0)$ in the unparametrised moduli space.\\

Next, we consider the deformation complex for the families moduli space around $(A,0)$. This requires some explanation. Let $E \to B$ be the Banach vector bundle over $B$ with fibre over $t \in B$ given by $E_t = L^{k+1,p}(X , iT^*X_t \oplus S^+_t )$ and similarly let $F \to B$ be the Banach vector bundle over $B$ given by $F_t = L^{k,p}(X_t , i \wedge^2_+ T^*X_t \oplus S^-_t)$. Working locally on $B$ if necessary, we can assume that a smooth family of reference connections $\{ A_{0,t} \}$ have been chosen. We can further suppose that $A_{0,b} = A$. Then any $L^{k+1,p}$-configuration can be  written in the form $(A_{0,t} + a , \varphi )$, where $t \in B$ and $(a , \varphi) \in E_t$. In this way we identify $L^{k+1,p}$-configurations for the Seiberg-Witten equations with elements of $E$. The Seiberg-Witten equations for the family defines a smooth map $SW : E \to F$. Differentiating at $(A,0) \in E_b$, we get a map $dSW_{b,(A,0)} : T_{b,(A,0)} E \to T_{b,(0,0)} F_b$. Using the zero section of $F \to B$, we have a canonical splitting $T_{b,(0,0)} F_b \cong T_b B \oplus F_b$. Let $\partial_2 : T_{b,(A,0)} E \to F_b = L^{k,p}(X_b , i \wedge^2_+ T^*X_b \oplus S^-_b)$ be the composition of $dSW_{b,(A,0)}$ with the projection to $F_b$. Then the deformation complex for the families Seiberg-Witten equations at $(A,0)$ takes the form:
\[
0 \to L^{k+2,p}( X_b , i\mathbb{R}) \buildrel i \circ \partial_1 \over \longrightarrow T_{b,(A,0)} E \buildrel \partial_2 \over \longrightarrow L^{k,p}(X_b , i \wedge^2_+ T^*X_b \oplus S^-_b) \to 0,
\]
where the map $\partial_1 : L^{k+2,p}( X_b , i\mathbb{R}) \to E_b$ is the same as in the unparameterised case and $i : E_b \to T_{b,(A,0)}E$ is the natural inclusion of the vertical tangent space.\\

For $i=0,1,2$, let $\hat{H}^i( A,0)$ denote the cohomology groups of the families deformation complex. Thus $\hat{H}^1(A,0)$ is the Zariski tangent space at $(A,0)$ in the families moduli space and $\hat{H}^2(A,0)$ is the obstruction space at $(A,0)$ in the families moduli space. The inclusion of the unparameterised deformation complex into the families deformation complex gives a short exact sequence of complexes:
\begin{equation*}\xymatrix{
& 0 \ar[d] & 0 \ar[d] & 0 \ar[d] & \\
0 \ar[r] & L^{k+2,p}( X_b , i\mathbb{R}) \ar[d] \ar[r]^-{\partial_1} & E_b \ar[d]^-{i} \ar[r]^-{\partial_2} & F_b \ar[d] \ar[r] & 0 \\
0 \ar[r] & L^{k+2,p}( X_b , i\mathbb{R}) \ar[d] \ar[r]^-{i \circ \partial_1} & T_{b,(A,0)} E \ar[d] \ar[r]^-{\partial_2} & F_b \ar[d] \ar[r] & 0 \\
0 \ar[r] & 0 \ar[d] \ar[r] & T_b B \ar[d] \ar[r] & 0 \ar[d] \ar[r] & 0 \\
& 0 & 0 & 0 &
}
\end{equation*}
We obtain the following exact sequence as part of the associated long exact sequence of cohomology groups:
\[
0 \to H^1(A,0) \to \hat{H}^1(A,0) \to T_b B \buildrel \delta \over \longrightarrow H^2(A,0) \to \hat{H}^2(A,0) \to 0.
\]

\begin{lemma}
Under the isomorphism $H^2(A,0) \cong H^+(X_b) \oplus Coker(D_A)$, we have that $\delta : T_b B \to H^2(A,0)$ factors as
\[
T_b B \buildrel \delta' \over \longrightarrow H^+(X_b) \to H^+(X_b) \oplus Coker(D_A),
\]
where the second map is the natural inclusion. Furthermore, if $\mu$ is chosen to intersect the wall $\mathcal{W}$ transversally, then $\delta' : T_b B \to H^+(X_b)$ is surjective.
\end{lemma}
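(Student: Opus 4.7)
The plan is to compute $\delta$ explicitly via the snake-lemma prescription using a horizontal lift, observe that the Dirac component vanishes at a reducible because the spinor is zero, and then identify the surviving $H^+$-component with the derivative of the section whose zeros are the wall. Given $v \in T_b B$, any lift $\tilde v \in T_{b,(A,0)} E$ to the middle row satisfies $\delta(v) = [\partial_2(\tilde v)] \in H^2(A,0)$; different lifts change $\partial_2(\tilde v)$ by an element of $\mathrm{Im}(\partial_2|_{E_b})$, which is invisible in $H^2(A,0)$. Using a smooth family of reference connections $\{A_{0,t}\}$ with $A_{0,b} = A$, take the horizontal lift obtained from the curve $t \mapsto (b(t), (A_{0,b(t)}, 0))$ for any $b(t)$ with $b(0) = b$, $\dot b(0) = v$. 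Along this curve the families Seiberg-Witten map is $\bigl(F^+_{A_{0,b(t)}}(g_{b(t)}) + i\mu_{b(t)},\, 0\bigr)$, because the spinor is identically zero, so
\[
\partial_2(\tilde v) = \Bigl(\tfrac{d}{dt}\big|_{t=0}\!\bigl(F^+_{A_{0,b(t)}}(g_{b(t)}) + i\mu_{b(t)}\bigr),\ 0\Bigr) \in L^{k,p}(i\Lambda^+_b) \oplus L^{k,p}(S^-_b).
\]

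The vanishing of the $L^{k,p}(S^-_b)$-component is the key geometric input: linearizing the Dirac equation in the base direction produces a term proportional to the spinor, which is zero at the reducible. Under the isomorphism $H^2(A,0) \cong H^+(X_b) \oplus \mathrm{Coker}(D_A)$, which splits the cokernel of $\partial_2|_{E_b}$ according to the two factors of the target, $\delta(v)$ therefore lies entirely in the $H^+(X_b)$ summand, proving the claimed factorization. The resulting map $\delta' : T_b B \to H^+(X_b)$ is obtained from $\partial_2(\tilde v)$ by applying the $L^2$-orthogonal projection $\Pi^+$ onto harmonic self-dual forms.

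For the surjectivity, unpack the definition of $\mathcal W$: by Hodge theory and $b_1(X) = 0$, a reducible solution at parameter $t$ exists precisely when the harmonic projections of $F^+_{A_{0,t}}(g_t)$ and $i\mu_t$ cancel, i.e., $\mathcal W(t)$ is, up to the sign convention, the projection $\Pi^+\!\bigl(F^+_{A_{0,t}}(g_t)\bigr)$ divided by $i$. Consequently the formula for $\delta'$ matches, up to an overall nonzero constant, the differential at $b$ of the section $s = [\mu] - \mathcal W$ of $H^+(\mathbb X/B) \to B$, using the canonical identification $T_0 H^+(X_b) \cong H^+(X_b)$ at the zero $s(b) = 0$. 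Transversality of $[\mu]$ to $\mathcal W$ at $b$ is, by definition, surjectivity of $(ds)_b$, and hence of $\delta'$. The main subtlety is making the identification $\delta' \leftrightarrow (ds)_b$ rigorous — in particular, tracking that the metric-dependent variation of $F^+_{A_{0,t}}(g_t)$ with $t$, coming from the change in the self-duality projection, is precisely the contribution to $d\mathcal W$ — but this is immediate from the definition of $\mathcal W$ as the harmonic projection of the fixed Chern-class representative onto the moving subspace $H^+(X_t) \subset H^2(X_t, \mathbb R)$.
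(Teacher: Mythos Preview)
Your argument is correct and follows essentially the same route as the paper's proof: both compute $\delta$ via the snake-lemma recipe using the horizontal lift $t \mapsto (b(t),(A_{0,b(t)},0))$, observe that the spinor component of $\partial_2(\tilde v)$ vanishes because the spinor is identically zero along this lift, and then identify the resulting map $\delta'$ with the differential at $b$ of the section $[\mu]-\mathcal W$ of $H^+(\mathbb X/B)$, from which surjectivity follows directly from transversality. The only cosmetic difference is that the paper writes the section as $\rho_t = -i\mathcal W_{\gamma(t)} + i[\mu_{\gamma(t)}]$ and projects via the zero section of $H^+(\mathbb X/B)\to B$, whereas you phrase it in terms of $s=[\mu]-\mathcal W$ and the identification $T_0 H^+(X_b)\cong H^+(X_b)$; these are the same statement.
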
 
\begin{proof}
Consider a tangent vector $\partial \in T_b B$. Let $\gamma(t)$ be a smooth path in $B$ with $\gamma'(0) = \partial$. We lift this to a path $\tilde{\gamma}(t)$ in $E$ passing through $( b , (A,0))$ at $t=0$. Namely, we choose $\tilde{\gamma}(t) = ( \gamma(t) , (A_{0,\gamma(t)} , 0) )$, where we recall that $A_{0,t}$ was a choice of a smooth family of reference connections. Then $\tilde{\gamma}'(0)$ is a tangent vector in $T_{b,(A,0)}E$ lifting $\partial$. The coboundary map $\delta : T_b B \to H^2(A,0)$ is given by applying $dSW_{b,(A,0)}$ to $\tilde{\gamma}'(0)$, followed by the projection $T_{b,(0,0)} F_b \to F_b$ determined by the zero section of $F \to B$ and then passing to cohomology. Applying the Seiberg-Witten equations $SW : E \to F$ to $\tilde{\gamma}(t)$ gives the path $( \gamma(t) , ( \pi_+(t) F_{A_{0,\gamma(t)}} + i\mu_{\gamma(t)} , 0 ) )$, where $\pi_+(t)$ denotes the projection $\pi_+(t) : i\wedge^2 T^*X_{\gamma(t)} \to i\wedge^2_+ T^*X_{\gamma(t)}$ to the bundle of $g_{\gamma(t)}$-self-dual $2$-forms. The fact that the spinor component of this is zero shows that $\delta$ factors through $H^+(X_b)$ as claimed. To prove the second claim, consider the map $t \mapsto \rho_t \in H^+(X_{\gamma(t)})$ given by $\rho_t = [\pi_+(t) F_{A_{0,\gamma(t)}} + i\mu_{\gamma(t)}] = -i\mathcal{W}_{\gamma(t)} + i[\mu_{\gamma(t)}]$. Then $\delta'(\partial)$ is given by differentiating $\rho_t$ at $t=0$ and projecting to $H^+(X_b)$ by using the zero section of $H^+(\mathbb{X}/B) \to B$. From this it is immediate that $\delta'$ is surjective if $[\mu]$ meets $\mathcal{W}$ transversally.
\end{proof}

\begin{corollary}\label{cor:obstructionspace}
Assume that $\dim(B) = b^+(X)$ and that $[\mu]$ meets $\mathcal{W}$ transversally. Then $\hat{H}^1(A,0) \cong Ker(D_A)$ and $\hat{H}^2(A,0) \cong Coker(D_A)$.
\end{corollary}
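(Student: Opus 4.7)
The plan is to extract both statements directly from the long exact sequence
\[
0 \to H^1(A,0) \to \hat{H}^1(A,0) \to T_b B \buildrel \delta \over \longrightarrow H^2(A,0) \to \hat{H}^2(A,0) \to 0
\]
together with the factorization of $\delta$ through $H^+(X_b)$ established in the preceding lemma. There is essentially no hard step, only a dimension count followed by bookkeeping.

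First I would observe that the hypothesis $\dim(B) = b^+(X)$ means $\dim T_b B = \dim H^+(X_b)$. By the lemma, $\delta$ factors as $T_b B \xrightarrow{\delta'} H^+(X_b) \hookrightarrow H^+(X_b) \oplus \operatorname{Coker}(D_A)$, and transversality of $[\mu]$ with $\mathcal{W}$ guarantees $\delta'$ is surjective. A surjective linear map between finite-dimensional spaces of the same dimension is an isomorphism, so $\delta'$ is an isomorphism. Since the inclusion $H^+(X_b)\hookrightarrow H^+(X_b)\oplus\operatorname{Coker}(D_A)$ is injective, $\delta$ itself is injective.

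Feeding injectivity of $\delta$ back into the long exact sequence, the map $\hat{H}^1(A,0) \to T_b B$ is the zero map, so the inclusion $H^1(A,0) \hookrightarrow \hat{H}^1(A,0)$ is an isomorphism. Combined with the identification $H^1(A,0) \cong \operatorname{Ker}(D_A)$ recorded in Section~\ref{sec:freducibles}, this yields $\hat{H}^1(A,0) \cong \operatorname{Ker}(D_A)$.

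For the obstruction space, exactness at $H^2(A,0)$ gives $\hat{H}^2(A,0) \cong H^2(A,0)/\operatorname{Im}(\delta)$. Under the decomposition $H^2(A,0) \cong H^+(X_b) \oplus \operatorname{Coker}(D_A)$, the image of $\delta$ is precisely the first summand (because $\delta'$ is an isomorphism onto $H^+(X_b)$ and $\delta$ lands in this summand by the lemma). Hence the quotient is canonically $\operatorname{Coker}(D_A)$, giving $\hat{H}^2(A,0) \cong \operatorname{Coker}(D_A)$, which completes the argument.
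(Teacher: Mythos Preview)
Your argument is correct and is precisely the reasoning the paper intends: the corollary is stated without proof because it follows immediately from the preceding lemma via the dimension count and long exact sequence exactly as you have written.
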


\subsection{A vanishing result from families Seiberg-Witten theory}\label{sec:vanishing}

Let $X$ be a smooth, compact, oriented $4$-manifold, $g$ a Riemannian metric on $X$ and $\Gamma$ a Spin$^c$-structure on $X$. Assume also that $b_1(X) = 0$. Fix a smooth reference connection $A_0$. Suppose that $\mu \in \Omega^2_+(X)$ is a perturbation lying on the wall, i.e., suppose $\mu$ is such that there exists a solution $A = A(\mu)$ to $F^+_A + i\mu = 0$. We can further assume the gauge fixing condition $d^*(A-A_0) = 0$. Since we are assuming $b_1(X) = 0$, we have that $A$ satisfying these equations is uniquely determined. Note that the complex index of the associated Dirac operator $D_A$ is given by $ind_{\mathbb{C}}(D_A) = ( c(\Gamma)^2 - \sigma(X))/8$, where $\sigma(X)$ is the signature of $X$.

\begin{lemma}\label{lem:unobstructed}
For a fixed metric $g$ and Spin$^c$-structure $\Gamma$, let $Z = Z(g,\Gamma) = \{ \mu \in \Omega^2_+(X) \; | \; \exists A \; \; F^+_A + i\mu = 0, \; \; d^*(A-A_0) = 0 \}$ be the set of perturbations lying on the wall. Let $Z_{reg} = \{ \mu \in Z \; | \; Coker(D_{A(\mu)}) = 0 \}$. Then if $c^2(\Gamma) \ge \sigma(X)$, we have that $Z_{reg}$ is a non-empty open subset of $Z$.
\end{lemma}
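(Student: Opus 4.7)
The plan is to treat openness and non-emptiness of $Z_{\mathrm{reg}}$ separately. For openness, I would argue as follows. Since $b_1(X) = 0$, every $S^1$-valued gauge transformation has the form $e^{if}$, and the Coulomb condition $d^*(A - A_0) = 0$ fixes $A$ up to constant gauge transformations; hence $A = A(\mu)$ is uniquely determined (modulo constants) by $\mu \in Z$. Elliptic regularity applied to the elliptic system $F^+_A + i\mu = 0$, $d^*(A - A_0) = 0$ shows that $\mu \mapsto A(\mu)$ is continuous in appropriate Sobolev topologies, so $\mu \mapsto D_{A(\mu)}$ is a continuous family of Fredholm operators. Upper semi-continuity of cokernel dimension then gives that $Z_{\mathrm{reg}}$ is open in $Z$.

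For non-emptiness, the hypothesis $c^2(\Gamma) \ge \sigma(X)$ gives $\mathrm{ind}_{\mathbb{C}}(D_A) = (c^2(\Gamma) - \sigma(X))/8 \ge 0$. Since $\dim \mathrm{Coker}(D_A) = \dim \mathrm{Ker}(D_A) - \mathrm{ind}_{\mathbb{C}}(D_A)$, it suffices to exhibit some $A \in \mathcal{A}^{d^*=0}$ for which $D_A$ is surjective. I would apply a standard Sard--Smale argument to the universal Dirac map $\mathcal{F} : \mathcal{A}^{d^*=0} \times L^{k+1,p}(S^+) \to L^{k,p}(S^-)$, $\mathcal{F}(A, \psi) = D_A \psi$. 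Granted surjectivity of its linearization at every $(A, \psi) \in \mathcal{F}^{-1}(0)$ with $\psi \not\equiv 0$ (the key step, discussed below), $\mathcal{F}^{-1}(0)$ is a smooth Banach manifold away from the zero section and the projection to $\mathcal{A}^{d^*=0}$ is a Fredholm map of index $\mathrm{ind}_{\mathbb{C}}(D_A)$. A direct computation of the differential of this projection identifies its set of regular values with $\{ A : \mathrm{Coker}(D_A) = 0 \}$, which is therefore residual in $\mathcal{A}^{d^*=0}$ by Sard--Smale, hence non-empty. Passing through the bijection $Z \cong \mathcal{A}^{d^*=0}/S^1$ (modulo the residual constant gauge) yields $Z_{\mathrm{reg}} \ne \emptyset$.

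The hard part will be verifying surjectivity of the linearization $(a, \phi) \mapsto \rho(a)\psi + D_A \phi$ at a solution $(A, \psi)$ with $\psi \not\equiv 0$. Concretely, I must show that if $\eta \in \mathrm{Ker}(D_A^*) \cong \mathrm{Coker}(D_A)$ satisfies $\int_X \langle \rho(a)\psi, \eta\rangle\, \mathrm{dvol} = 0$ for every $a \in \Omega^1(X)$, then $\eta = 0$. I would argue pointwise: at any $x$ with $\psi(x) \ne 0$, the Clifford map $a \mapsto \rho(a)\psi(x) : T_x^*X \to S_x^-$ is a real linear isomorphism (both sides have real rank four, and injectivity follows from $\rho(a)^2 = -|a|^2$), so testing against test forms $a$ supported near $x$ forces $\eta(x) = 0$. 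Since $\psi \in \mathrm{Ker}(D_A) \setminus \{0\}$, unique continuation for $D_A$ implies the zero set of $\psi$ has empty interior, while elliptic regularity renders $\eta$ continuous, so its vanishing on the resulting dense open set forces $\eta \equiv 0$. This completes the surjectivity step and hence the proof.
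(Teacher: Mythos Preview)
Your approach is correct and essentially the same as the paper's. Both arguments apply Sard--Smale to the universal moduli space $\{(A,\psi): D_A\psi = 0,\ \psi\neq 0,\ d^*(A-A_0)=0\}$; the paper projects to $Z^{k,p}$ via $(A,\Phi)\mapsto iF_A^+$ while you project to $\mathcal{A}^{d^*=0}$, but these targets are in affine bijection so the content is identical. Two minor differences: the paper cites the surjectivity of the linearisation as Lemma~8.17 from Salamon, whereas you supply a direct proof via the pointwise Clifford isomorphism and unique continuation (which is exactly how Salamon's lemma is proved); and the paper explicitly separates out the case where the preimage of a regular value is empty (forcing $\ker D_A = 0$, hence $\mathrm{coker}\,D_A = 0$ since $\mathrm{ind}\ge 0$), which your sentence ``identifies its set of regular values with $\{A:\mathrm{Coker}(D_A)=0\}$'' absorbs implicitly. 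The paper is also slightly more careful about passing from the Sobolev $Z^{k,p}_{\mathrm{reg}}$ back to the smooth $Z_{\mathrm{reg}}$ via density and openness, a step you should make explicit.
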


To prove this result we first pass to the setting of Banach manifolds. Thus we will take an integer $k \ge 0$ and a real number $p > 4$ and consider:
\[
Z^{k,p} = \{ \mu \in L^{k,p}(X , \wedge^2_+ T^*X) \; | \; \; \exists A \in L^{k+1,p}(X,\Gamma) \; \; F^+_A + i\mu = 0, \; \; d^*(A-A_0) = 0 \},
\]
where $L^{k+1,p}(X,\Gamma)$ denotes the space of $L^{k+1,p}$-Spin$^c$-connections. Note that $Z^{k,p}$ is an affine subspace of $L^{k,p}(X , \wedge^2_+ T^*X)$ of codimension $b^+(X)$, in particular it is a Banach manifold with the induced metric. The tangent space to $Z^{k,p}$ at any $\mu \in Z^{k,p}$ is given by $\{ d^+ \alpha \; | \; \alpha \in L^{k+1,p}(X , T^*X), \; \; d^* \alpha = 0 \}$. Note that since $b_1(X) = 0$, each $d^+$-exact $2$-form can be written as $d^+\alpha$ for a unique $\alpha$ satisfying $d^*\alpha = 0$.\\

We need the following two results from \cite{sal}:

\begin{lemma}[Lemma 8.17 \cite{sal}]
Assume $p > 4$ and $k \ge 0$. Suppose $A$ is an $L^{k+1,p}$ Spin$^c$-connection, $\Phi \in L^{k+1,p}(X , S^+)$ satisfying $D_A \Phi = 0$, $\Phi \neq 0$. Let $L^{k,p}_0(X , i\mathbb{R})$ denote the space of $L^{k,p}$-sections of $i\mathbb{R}$ which integrate to zero over $X$. Then the operator
\begin{equation*}
\begin{aligned}
D_{A,\Phi} : L^{k+1,p}(X , iT^*X \oplus S^+) &\to L^{k,p}_0(X , i\mathbb{R}) \oplus L^{k,p}(X , S^-) \\
\left( \begin{matrix} \alpha \\ \varphi \end{matrix} \right) & \mapsto \left( \begin{matrix} d^*\alpha \\ D_A\varphi + \alpha \cdot \Phi \end{matrix} \right)
\end{aligned}
\end{equation*}
is onto and has a right inverse.
\end{lemma}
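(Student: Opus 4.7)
The plan is to prove surjectivity of $D_{A,\Phi}$ and then extract a right inverse using the formal adjoint. Given a target $(g,\chi) \in L^{k,p}_0(X, i\mathbb{R}) \oplus L^{k,p}(X, S^-)$, I would first solve $d^*\alpha = g$ by a particular $\alpha_0$: since $b_1(X) = 0$ and $\int_X g = 0$, Hodge theory gives a unique solution $\eta$ to $\Delta\eta = g$ and we set $\alpha_0 = d\eta$. Any other solution of $d^*\alpha = g$ differs from $\alpha_0$ by a coclosed $1$-form $\beta$. The remaining equation $D_A\varphi = \chi - (\alpha_0 + \beta)\cdot\Phi$ is solvable in $\varphi \in L^{k+1,p}(X, S^+)$ if and only if the right-hand side is $L^2$-orthogonal to $\ker(D_A: L^{k+1,p}(X, S^-) \to L^{k,p}(X, S^+))$, a finite-dimensional space. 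The core of the proof is therefore to show that the evaluation $\beta \mapsto \{\psi \mapsto \langle\beta\cdot\Phi, \psi\rangle_{L^2}\}$ from coclosed $1$-forms to the dual of this kernel is surjective.

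Equivalently, suppose $\psi$ satisfies $D_A\psi = 0$ and $\langle\beta\cdot\Phi, \psi\rangle_{L^2} = 0$ for every coclosed $\beta \in L^{k+1,p}(X, iT^*X)$. Define the $iT^*X$-valued sesquilinear pairing $\sigma(\Phi,\psi)$ by the identity $\langle\alpha\cdot\Phi, \psi\rangle_{L^2} = \langle\alpha, \sigma(\Phi,\psi)\rangle_{L^2}$ for all $\alpha$. Then $\sigma(\Phi,\psi)$ is $L^2$-orthogonal to all coclosed $1$-forms, so by Hodge theory together with $b_1(X) = 0$ it must be exact, say $\sigma(\Phi,\psi) = df$ for some function $f$. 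Forming the test spinor $f\Phi \in L^{k+1,p}(X, S^+)$ and exploiting the hypothesis $D_A\Phi = 0$ gives $D_A(f\Phi) = df\cdot\Phi = \sigma(\Phi,\psi)\cdot\Phi$, and then formal self-adjointness of the Dirac operator yields
\begin{equation*}
0 = \langle f\Phi, D_A\psi\rangle_{L^2} = \langle D_A(f\Phi), \psi\rangle_{L^2} = \langle\sigma(\Phi,\psi)\cdot\Phi, \psi\rangle_{L^2} = \|\sigma(\Phi,\psi)\|_{L^2}^2,
\end{equation*}
forcing $\sigma(\Phi,\psi) \equiv 0$.

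To conclude $\psi \equiv 0$, I would invoke Aronszajn's unique continuation theorem for the Dirac operator: since $D_A\Phi = 0$ and $\Phi \not\equiv 0$, the set $\{x : \Phi(x) \neq 0\}$ is open and dense in $X$. At every such point, Clifford multiplication $v \mapsto v\cdot\Phi(x)$ is a real linear isomorphism $T_xX \to S^-_x$ (both real $4$-dimensional, injectivity coming from $v\cdot v = -|v|^2$), so the pointwise vanishing of $\sigma(\Phi,\psi)$ forces $\psi(x) = 0$ on this dense open set. The Sobolev embedding $L^{k+1,p} \hookrightarrow C^0$ for $p > 4$ gives continuity of $\psi$, whence $\psi \equiv 0$ on all of $X$. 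This proves surjectivity of $D_{A,\Phi}$. Since $D_{A,\Phi}$ is now a bounded surjection between Banach spaces, its range is automatically the whole codomain (closed). To produce a bounded right inverse I would form $D_{A,\Phi} D_{A,\Phi}^*$: this is a second-order elliptic formally self-adjoint operator whose kernel is trivial by the surjectivity just proved, hence an isomorphism, so that $R := D_{A,\Phi}^* (D_{A,\Phi} D_{A,\Phi}^*)^{-1}$ is the desired bounded right inverse.

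The main technical obstacle I anticipate is the sign and conjugation bookkeeping in the key identity $\langle\sigma(\Phi,\psi)\cdot\Phi,\psi\rangle_{L^2} = \|\sigma(\Phi,\psi)\|_{L^2}^2$: depending on whether the metrics on $iT^*X$ and $S^\pm$ are taken real-symmetric or Hermitian, a factor of $\mathrm{Re}$ or a sign change may be needed before the right-hand side is manifestly a positive-definite norm, and this is the single step where the hypothesis $\Phi \not\equiv 0$ enters essentially via the unique continuation argument. Once this identity is correctly calibrated, the Hodge-theoretic reduction, the Aronszajn continuation step, and the standard elliptic machinery producing $R$ are all routine.
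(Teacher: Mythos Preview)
The paper does not give its own proof of this lemma; it is quoted verbatim from Salamon's notes and used as a black box. So there is no proof in the paper to compare against, only the standard argument from the cited reference.

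Your argument is correct, but it is more elaborate than necessary. The standard route (and the one in Salamon) is a direct cokernel computation: suppose $(g,\chi)$ annihilates the image of $D_{A,\Phi}$ under the $L^2$ pairing. Taking $\varphi=0$ and letting $\alpha$ range over \emph{all} imaginary $1$-forms (not just coclosed ones) immediately gives $\sigma(\Phi,\chi)=0$ pointwise, with no need for the Hodge-theoretic step or the $f\Phi$ trick; taking $\alpha=0$ gives $D_A\chi=0$, and orthogonality to the image of $d^*$ gives $dg=0$, hence $g=0$. The unique continuation step and the conclusion $\chi=0$ are then identical to yours. Your detour through coclosed $\beta$ arises only because you chose to build a preimage directly rather than argue by duality; it works, but it introduces the extra algebra you flag at the end. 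Neither argument actually needs $b_1(X)=0$: the orthogonal complement of coclosed $1$-forms is always the exact $1$-forms, and $d^*$ is always onto mean-zero functions.

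Your right-inverse construction via $R=D_{A,\Phi}^*(D_{A,\Phi}D_{A,\Phi}^*)^{-1}$ is fine once one checks that $D_{A,\Phi}D_{A,\Phi}^*$ is genuinely elliptic on $i\mathbb{R}_0\oplus S^-$ (its principal symbol at $\xi$ is $|\xi|^2\cdot\mathrm{id}$), since $D_{A,\Phi}$ itself is not elliptic; you assert this without comment, but it is the point that makes the construction go through.
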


\begin{proposition}[Proposition 8.16 \cite{sal}]
For every $p > 4$ and $k \ge 0$, let
\[
N^{k+1,p} = \{ (A , \Phi) \in L^{k+1,p}(X,\Gamma) \oplus L^{k+1,p}(X,S^+) \; | \; D_A \Phi = 0, \; \; d^*(A - A_0) = 0, \; \; \Phi \neq 0 \}.
\]
We have that $N^{k+1,p}$ is a smooth paracompact separable Banach manifold. Its tangent space at $(A,\Phi)$ is given by
\[
\{ (\alpha , \varphi ) \in L^{k+1,p}(X , iT^*X \oplus S^+) \; | \; D_A \varphi + \alpha \cdot \Phi =0, \; \; d^*\alpha = 0 \}.
\] 
\end{proposition}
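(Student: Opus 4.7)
The plan is to realise $N^{k+1,p}$ as the zero set of a smooth map between Banach manifolds and then apply the implicit function theorem, with the preceding lemma supplying the surjectivity-plus-right-inverse hypothesis. First I would fix the ambient affine Banach space $\mathcal{X} = L^{k+1,p}(X,\Gamma) \times L^{k+1,p}(X,S^+)$ and pass to the open subset $\mathcal{U} \subset \mathcal{X}$ cut out by $\Phi \not\equiv 0$ (openness is immediate, and for $p>4$, $k\ge 0$ one even has $L^{k+1,p} \hookrightarrow C^0$). On $\mathcal{U}$ define
\[
F(A,\Phi) \;=\; \bigl(\, d^*(A-A_0),\; D_A \Phi \,\bigr),
\]
with target $L^{k,p}_0(X, i\mathbb{R}) \oplus L^{k,p}(X, S^-)$. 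The first component does land in $L^{k,p}_0$, since $\int_X d^*\alpha\, dvol = 0$ for any $L^{k+1,p}$-one-form $\alpha$. By construction $N^{k+1,p} = F^{-1}(0)$.

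Next I would verify that $F$ is smooth. The map $A \mapsto d^*(A-A_0)$ is bounded affine-linear, hence smooth. For the Dirac term, writing $D_A = D_{A_0} + (A-A_0)\cdot$ gives $D_A\Phi = D_{A_0}\Phi + (A-A_0)\cdot \Phi$; the linear piece is bounded $L^{k+1,p} \to L^{k,p}$, while Clifford multiplication defines a continuous bilinear map $L^{k+1,p} \times L^{k+1,p} \to L^{k,p}$ by the Sobolev multiplication theorem (which applies comfortably, since $p>4=\dim X$ forces $L^{k+1,p}$ into $C^0$ for $k\ge 0$, making it a Banach algebra). Consequently $F$ is smooth and a direct computation yields
\[
dF_{(A,\Phi)}(\alpha,\varphi) \;=\; \bigl(\, d^*\alpha,\; D_A\varphi + \alpha\cdot\Phi \,\bigr),
\]
which is exactly the operator $D_{A,\Phi}$ of the previous lemma.

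At every point $(A,\Phi) \in N^{k+1,p}$ we have $D_A\Phi = 0$ and $\Phi \neq 0$, so the quoted lemma says $dF_{(A,\Phi)}$ is surjective and admits a bounded right inverse; thus $\ker dF_{(A,\Phi)}$ is closed and complemented in $T_{(A,\Phi)}\mathcal{U}$. The implicit function theorem for Banach manifolds then identifies $N^{k+1,p}$ as a smooth Banach submanifold of $\mathcal{U}$ whose tangent space at $(A,\Phi)$ is $\ker dF_{(A,\Phi)}$, which is precisely the description stated in the proposition. Paracompactness and separability are inherited from $\mathcal{X}$, a product of separable (and hence paracompact) Sobolev spaces. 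The one technical point worth attention is the smoothness of the Dirac term on the given Sobolev scale; once that multiplication estimate is in place, the proposition is a direct application of the implicit function theorem together with the previous lemma.
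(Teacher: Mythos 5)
The paper does not prove this statement---it is quoted verbatim from Salamon's notes (Proposition 8.16 of \cite{sal}), immediately after importing Lemma 8.17 precisely so that the linearisation $D_{A,\Phi}$ is known to be surjective with a bounded right inverse. Your argument is correct and is the standard one underlying the cited result: realising $N^{k+1,p}$ as $F^{-1}(0)$ on the open set $\{\Phi \neq 0\}$, checking smoothness via the Sobolev multiplication $L^{k+1,p}\times L^{k+1,p}\to L^{k,p}$ (valid since $p>4=\dim X$), and applying the Banach implicit function theorem, with the right inverse guaranteeing that $\ker dF_{(A,\Phi)}$ is complemented.
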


Now let us consider the map $f : N^{k+1,p} \to Z^{k,p}$ given by $f(A,\Phi) = iF^+_A$. The derivative of $f$ at $(A,\Phi)$ is given by
\[
df_{(A,\Phi)}( \alpha , \varphi ) = id^+\alpha.
\]
\begin{lemma}
The map $f$ is Fredholm. More precisely, for any $(A,\Phi) \in N^{k+1,p}$ we have
\[
Ker( df_{(A,\Phi)} ) \cong Ker(D_A), \quad \quad Coker( df_{(A,\Phi)} ) \cong Coker(D_A).
\]
\end{lemma}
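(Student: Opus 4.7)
The plan is to identify $\mathrm{Ker}(df_{(A,\Phi)})$ and $\mathrm{Coker}(df_{(A,\Phi)})$ directly with $\mathrm{Ker}(D_A)$ and $\mathrm{Coker}(D_A)$ respectively, by translating the definition of $df$ into PDE conditions on the perturbation $\alpha$ and invoking the preceding Lemma 8.17 where needed. The key preliminary observation is that, thanks to $b_1(X)=0$, the map $\alpha \mapsto d^+\alpha$ is a bijection from $\{\alpha \in L^{k+1,p}(X, iT^*X) : d^*\alpha = 0\}$ onto $T_{f(A,\Phi)}Z^{k,p}$. This uses the standard Stokes--Hodge argument: pointwise $d\alpha \wedge d\alpha = (|d^+\alpha|^2 - |d^-\alpha|^2)\,dvol$, and the integral vanishes by Stokes' theorem, so $\|d^+\alpha\| = \|d^-\alpha\|$. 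Thus $d^+\alpha = 0$ forces $d\alpha = 0$, and combined with $d^*\alpha = 0$ and $b_1(X) = 0$ this yields $\alpha = 0$.

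The kernel computation then reduces to almost nothing. A pair $(\alpha,\varphi) \in T_{(A,\Phi)}N^{k+1,p}$ lies in $\mathrm{Ker}(df_{(A,\Phi)})$ precisely when $d^+\alpha = 0$; by the preceding observation this forces $\alpha = 0$, so the constraint $D_A\varphi + \alpha\cdot\Phi = 0$ collapses to $D_A\varphi = 0$. This gives $\mathrm{Ker}(df_{(A,\Phi)}) \cong \mathrm{Ker}(D_A)$ as claimed.

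For the cokernel I would construct an explicit linear map
\[
\Psi \colon T_{f(A,\Phi)}Z^{k,p} \longrightarrow \mathrm{Coker}(D_A), \qquad id^+\alpha \mapsto [\alpha \cdot \Phi],
\]
where $\alpha$ is the unique $1$-form satisfying $d^*\alpha = 0$ with the prescribed $d^+$-part. By construction a class $[\alpha\cdot\Phi]$ vanishes in $\mathrm{Coker}(D_A)$ iff there exists $\varphi$ with $D_A\varphi = -\alpha\cdot\Phi$, which is exactly the condition placing $(\alpha,\varphi)$ in $T_{(A,\Phi)}N^{k+1,p}$; hence $\mathrm{Ker}(\Psi) = \mathrm{Image}(df_{(A,\Phi)})$. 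Surjectivity of $\Psi$ is where the hypothesis $\Phi \neq 0$ pays off: applying Lemma 8.17 above to the pair $(0, \psi) \in L^{k,p}_0(X,i\mathbb{R}) \oplus L^{k,p}(X,S^-)$ produces $(\alpha,\varphi)$ with $d^*\alpha = 0$ and $D_A\varphi + \alpha\cdot\Phi = \psi$, whence $\Psi(id^+\alpha) = [\psi]$. Thus $\Psi$ descends to an isomorphism $\mathrm{Coker}(df_{(A,\Phi)}) \cong \mathrm{Coker}(D_A)$, and since $D_A$ is Fredholm, $df_{(A,\Phi)}$ is Fredholm with the claimed kernel and cokernel. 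The main subtlety is book-keeping rather than any single hard step: the good definition of $\Psi$ rests on $b_1(X) = 0$ (so that $\alpha$ is recovered from $d^+\alpha$), while the surjectivity of $\Psi$ rests on $\Phi \neq 0$ via Lemma 8.17, and both are exactly the hypotheses built into the definition of $N^{k+1,p}$.
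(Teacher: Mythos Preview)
Your proof is correct and follows essentially the same route as the paper: both arguments use $b_1(X)=0$ to deduce $\alpha=0$ from $d^+\alpha=0$ and $d^*\alpha=0$ for the kernel, and both construct the same map $T_\mu Z^{k,p}\to\mathrm{Coker}(D_A)$, $d^+\alpha\mapsto[\alpha\cdot\Phi]$, verifying that its kernel is $\mathrm{Im}(df_{(A,\Phi)})$ and its surjectivity via Lemma~8.17. Your write-up spells out the Stokes identity $\|d^+\alpha\|=\|d^-\alpha\|$ a little more explicitly than the paper (which simply says ``$\alpha$ is harmonic''), but the substance is identical.
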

\begin{proof}
Let us first note that although the operator $D_A$ need not have smooth coefficients, one can use the Sobolev theorems to see that $D_A : L^{k+1,p}(X,S^+) \to L^{k,p}(X,S^-)$ is Fredholm for any $k \ge 0$ and $p > 4$ (see for example \cite[Proposition 8.2]{sal}). Thus $Ker(D_A)$ and $Coker(D_A)$ are finite dimensional. One also sees that the index of $D_A$ is independent of $A$ under the stated conditions and is therefore given by $ind_{\mathbb{C}}(D_A) = ( c(\Gamma)^2 - \sigma(X))/8$.\\

Suppose that $df_{(A,\Phi)}(\alpha , \varphi) = 0$. Then we have
\[
d^+\alpha = 0, \quad d^* \alpha = 0, \quad D_A \varphi = -\alpha \cdot \Phi.
\]
The first two of these equations say that $\alpha$ is harmonic, hence $\alpha = 0$ as $b_1(X) = 0$. The third equation then reduces to $D_A \varphi = 0$, giving $Ker( df_{(A,\Phi)} ) \cong Ker(D_A)$.\\

Let us regard $Coker(D_A)$ as the cokernel of the map $D_A : L^{k+1,p}(X,S^+) \to L^{k,p}(X,S^-)$. Let $\mu = f(A,\Phi)$ and define a map $\psi : T_\mu Z^{k,p} \to Coker(D_A)$ as follows. Recall that
\[
T_\mu Z^{k,p} \cong \{ d^+ \alpha \; | \; \alpha \in L^{k+1,p}(X , T^*X), \; \; d^* \alpha = 0 \}.
\]
Then we define $\psi( d^+\alpha)$ to be the image of $\alpha \cdot \Phi$ in $Coker(D_A)$. Let $j : Im( df_{(A,\Phi)} ) \to T_\mu Z^{k,p}$ be the inclusion map. We claim the following is a short exact sequence:
\[
0 \to Im(df_{(A,\Phi)}) \buildrel j \over \longrightarrow T_\mu Z^{k,p} \buildrel \psi \over \longrightarrow Coker(D_A) \to 0.
\]
Clearly $j$ is injective by its definition. Let $(\alpha , \varphi) \in T_{(A,\Phi)} N^{k+1,p}$. The by definition of the maps involved, we have $\psi( j( df_{(A,\Phi)} (\alpha , \varphi))) = i\psi(d^+\alpha) = i\alpha \cdot \Phi \; ({\rm mod} \; Im(D_A))$. But $(\alpha , \varphi)$ satisfies $D_A \varphi = -\alpha \cdot \Phi$, which shows that $i\alpha \cdot \Phi$ is in the image of $D_A$. Thus $\psi \circ j = 0$. Now let $d^+\alpha \in T_\mu Z^{k,p}$ and suppose that $\psi(d^+\alpha) = 0$. By definition of $\psi$, this means that $\alpha \cdot \Phi = D_A \varphi$ for some $\varphi \in L^{k+1,p}(X , S^+)$. It follows that $(-i\alpha , i\varphi) \in T_{(A,\Phi)}N^{k+1,p}$ and $j \circ df_{(A,\Phi)}(-i\alpha , i\varphi ) = d^+\alpha$. This proves exactness of the sequence at $T_\mu Z^{k,p}$. Lastly, suppose that $\Psi \in Coker( D_A )$. Represent $\Psi$ by an element of $L^{k,p}(X , S^-)$. By Lemma 8.17, there exists $(\alpha , \varphi) \in L^{k+1,p}(X , iT^*X \oplus S^+)$ with $d^* \alpha = 0$ and $D_A \varphi + \alpha \cdot \Phi = \Psi$. The latter implies that $\Psi = \alpha \cdot \Phi \; ({\rm mod} \; Im(D_A))$ and thus $\Psi = \psi( d^+ \alpha)$ proving surjectivity of the sequence at $Coker(D_A)$. It now follows that $Coker( df_{(A,\Phi)} ) \cong Coker(D_A)$.
\end{proof}

From this lemma we observe that if $(A , \Phi) \in N^{k+1,p}$, then $f$ is a submersion at $(A,\Phi)$ if and only if $Coker(D_A) = 0$. We will make use of this to prove Lemma \ref{lem:unobstructed}.\\

\begin{proof}[Proof of Lemma \ref{lem:unobstructed}] 
Let $Z^{k,p}_{reg} = \{ \mu \in Z^{k,p} \; | \; Coker(D_{A(\mu)}) = 0 \}$. We first show $Z^{k,p}_{reg}$ is non-empty. By the Sard-Smale theorem there exists a regular value $\mu \in Z^{k,p}$ of the map $f : N^{k+1,p} \to Z^{k,p}$. Observe that
\[
f^{-1}(\mu) = \{ (A(\mu) , \Phi) \in N^{k+1,p} \; | \; D_{A(\mu)} \Phi = 0, \; \; \Phi \neq 0 \}.
\]
If $f^{-1}(\mu)$ is non-empty, then there exists some $(A ,\Phi)$ for which $f$ is a submersion at $(A,\Phi)$. By the previous lemma, this implies that $Coker(D_A) = 0$.\\

Now suppose that $f^{-1}(\mu)$ is empty. This happens if and only $Ker(D_A) = 0$. It follows that $ind(D_A) \le 0$. However, we are assuming that $ind(D_A) \ge 0$, so this can happen only if $ind(D_A) = 0$. But $ind(D_A) = 0$ and $Ker(D_A) = 0$ also implies that $Coker(D_A) = 0$.\\

We now show that $Z^{k,p}_{reg}$ is open. To see this note that the assignment $\mu \mapsto D_{A(\mu)}$ is a continuous family of Fredholm operators parametrised by $Z^{k,p}$. Then $\mu \mapsto dim Coker( D_{A(\mu)})$ is upper-semicontinuous and hence $Z^{k,p}_{reg}$ is open. To complete the proof of the lemma, observe that $Z = Z^{k,p} \cap \mathcal{C}^{\infty}(X , \wedge^2_+ T^*X)$ is dense in $Z^{k,p}$, hence $Z$ meets $Z^{k,p}_{reg}$. But $Z_{reg} = Z \cap Z^{k,p}_{reg}$, which shows that $Z_{reg}$ is non-empty. It is also clear that $Z_{reg}$ is open.
\end{proof}

\begin{corollary}\label{cor:generic}
Suppose that $\pi : \mathbb{X} \to B$ is a fibrewise oriented family over a compact base of dimension $d = b^+(X)$ and fibres diffeomorphic to $X$. Let $\Gamma \in \mathcal{S}(X)$ be a monodromy invariant Spin$^c$-structure such that $c(\Gamma)^2 \ge \sigma(X)$. Then for any choice of families metric $g = \{g_b \}$ we can choose a families perturbation $\mu = \{\mu_b\}$ with the following properties:
\begin{itemize}
\item[(1)]{$\mu$ intersects $\mathcal{W}$ transversally in finitely many points $b_1, \dots , b_N \in B$.}
\item[(2)]{For $i = 1, \dots , N$, we have that $\mu_{b_i} \in Z_{reg}(g_{b_i} , \Gamma )$.}
\item[(3)]{$\mu$ is generic in the sense that every irreducible solution of the families Seiberg-Witten equations parametrised by $B$ has trivial obstruction space.}
\end{itemize}
\end{corollary}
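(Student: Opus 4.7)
The plan is to achieve the three conditions in sequence, exploiting that each is an open condition that can be attained by a suitable small perturbation of $\mu$, using standard Thom transversality together with Lemma \ref{lem:unobstructed}.

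First I would fix an arbitrary smooth families perturbation $\mu_0$ and consider the associated section $[\mu_0]:B\to H^+(\mathbb{X}/B)$. The assignment $\mu\mapsto [\mu]$ is an affine surjection onto the space of smooth sections, so a standard parametric transversality argument (Thom transversality applied to the submanifold $\mathcal{W}\subset H^+(\mathbb{X}/B)$) yields a residual, hence dense, set of perturbations $\mu$ for which $[\mu]\pitchfork\mathcal{W}$. Since $\dim B = b^+(X)=\mathrm{rank}\,H^+(\mathbb{X}/B)$, the transverse intersection is zero-dimensional; by compactness of $B$ it consists of finitely many points $b_1,\dots,b_N$. This achieves condition (1), and the condition is visibly $C^\infty$-open.

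Next I would modify $\mu$ to achieve (2). At each $b_i$ the fibrewise perturbation $\mu_{b_i}$ lies on the wall $Z(g_{b_i},\Gamma)$, and Lemma \ref{lem:unobstructed} guarantees that $Z_{\mathrm{reg}}(g_{b_i},\Gamma)$ is a nonempty open subset of $Z(g_{b_i},\Gamma)$. Pick $\mu_i^\ast\in Z_{\mathrm{reg}}(g_{b_i},\Gamma)$ for each $i$, together with disjoint trivialising neighbourhoods $U_i\ni b_i$ and cut-off functions $\rho_i$ supported in $U_i$ with $\rho_i(b_i)=1$. Using the local trivialisations of $\wedge^2_+T(\mathbb{X}/B)^*$ over $U_i$, define
\[
\mu' \;=\; \mu + \sum_{i=1}^{N}\rho_i\cdot(\mu_i^\ast-\mu_{b_i}),
\]
so that $\mu'_{b_i}=\mu_i^\ast$. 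Scaling $\rho_i$ to have sufficiently small $C^\infty$-norm (or replacing $\mu_i^\ast$ by an intermediate point on the line from $\mu_{b_i}$ to $\mu_i^\ast$), openness of transversality ensures $[\mu']$ is still transverse to $\mathcal{W}$, with new intersection points $b_i'$ close to $b_i$. Because $\mu'_{b_i'}$ is close to $\mu_i^\ast$ and $b\mapsto D_{A(\mu_b)}$ depends continuously on $b$ and on the perturbation, upper semicontinuity of $\dim\mathrm{Coker}(D_{A(\mu_{b_i'})})$ combined with openness of $Z_{\mathrm{reg}}$ gives $\mu'_{b_i'}\in Z_{\mathrm{reg}}(g_{b_i'},\Gamma)$, achieving (2).

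Finally, to achieve (3), I would invoke the standard parametric Sard--Smale transversality argument for the families Seiberg--Witten equations (as in \cite{nic,sal}, adapted to families as recorded at the start of Section \ref{sec:freducibles}): for a generic perturbation the irreducible part of the parametrised moduli space is cut out transversally, so at each irreducible solution the obstruction space $\hat{H}^2$ vanishes. Since conditions (1) and (2) are open in $\mu$, a sufficiently small generic perturbation satisfying (3) preserves them.

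The main obstacle is the joint achievement of (1) and (2): the explicit perturbation used to move $\mu_{b_i}$ into $Z_{\mathrm{reg}}$ is not tangent to the infinite-codimension wall, so I cannot simply arrange $\mu'_{b_i}=\mu_i^\ast$ at the \emph{old} intersection point. The argument instead requires tracking the new transverse zero $b_i'$ via the implicit function theorem and controlling $\mu'_{b_i'}$ through continuity, so the key quantitative point is that the local bump perturbation can be made small enough that $b_i'$ remains in the open set where the Dirac cokernel vanishes.
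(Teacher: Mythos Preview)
Your overall plan --- achieve (1) by transversality, then (2) by moving each $\mu_{b_i}$ into $Z_{\mathrm{reg}}$, then (3) by density of generic perturbations, using that (1) and (2) are open --- is exactly the paper's strategy. The difference is in how (2) is achieved without disturbing (1), and here your argument has a gap.

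You correctly pick $\mu_i^\ast\in Z_{\mathrm{reg}}(g_{b_i},\Gamma)$, but the fixes you propose to keep the perturbation ``small enough'' do not work as stated: you cannot make $\rho_i$ have small $C^\infty$-norm while also demanding $\rho_i(b_i)=1$, and since Lemma~\ref{lem:unobstructed} only asserts that $Z_{\mathrm{reg}}$ is open and nonempty (not dense), there is no reason to expect a $\mu_i^\ast\in Z_{\mathrm{reg}}$ close to $\mu_{b_i}$, so the intermediate-point option may fail too. More importantly, your diagnosis that the perturbation ``is not tangent to the infinite-codimension wall'' is backwards: both $\mu_{b_i}$ and $\mu_i^\ast$ lie in the affine space $Z(g_{b_i},\Gamma)$, whose tangent space is exactly the $d^+_{g_{b_i}}$-exact forms, so their difference has \emph{zero} harmonic projection. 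This is precisely what the paper exploits: it takes the modification to be $\mu_b=\mu'_b+d^{+_{g_b}}(a_b)$ for a smooth family of $1$-forms $\{a_b\}$, so that $[\mu]=[\mu']$ identically on all of $B$. Condition (1) is then preserved with the \emph{same} intersection points $b_1,\dots,b_N$, there are no new zeros $b_i'$ to track, and no implicit-function or semicontinuity bookkeeping is needed. Once you notice this, your ``main obstacle'' dissolves.
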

\begin{proof}
As the set of perturbations $\mu = \{\mu_b\}$ satisfying $(3)$ is dense in the $\mathcal{C}^\infty$-topology, it suffices to show that the set of perturbations $\mu $ satisfying $(1)$ and $(2)$ is non-empty and open. Openness is straightforward so it remains to show that there exists a perturbation satisfying $(1)$ and $(2)$. Choose some initial family of perturbations $\mu' = \{ \mu'_b \}$ such that $[\mu']$ meets $\mathcal{W}$ transversally in points $b_1 , \dots , b_N \in B$. Observe for each $i$ that $\mu'_{b_i} \in Z(g_{b_i} , \Gamma)$. Notice also that $Z(g_{b_i} , \Gamma)$ is the space of $g_{b_i}$-self-dual $2$-forms on $X_{b_i}$ whose projection to $H^+_{g_{b_i}}(X_{b_i})$ agrees with $[ \mu'_{b_i}]$. It follows from Lemma \ref{lem:unobstructed} that we can find a family of $1$-forms $\{ a_b \}$ such that if we set $\mu_b = \mu'_b + d^{+_{g_b}} (a_b)$, then for each $i$, $\mu_{b_i} \in Z_{reg}( g_{b_i} , \Gamma)$. As adding a $d^{+_{g_b}}$-exact term does not change the class $[\mu']$, we see that $\mu$ satisfies $(1)$ and $(2)$.
\end{proof}

\begin{theorem}\label{t:hplusobstruction}
Let $X$ be a compact, oriented, smooth $4$-manifold with $b_1(X) = 0$ and $b^+(X) > 0$. Suppose that $\pi : \mathbb{X} \to B$ is a fibrewise oriented family over a compact base of dimension $d = b^+(X)$. Let $\Gamma \in \mathcal{S}(X)$ be a monodromy invariant Spin$^c$-structure such that $c(\Gamma)^2 > \sigma(X)$. Suppose that one of the following holds:
\begin{itemize}
\item[(1)]{we have $c^2(\Gamma) - \sigma(X) = 8 \; ({\rm mod} \; 16 )$, or}
\item[(2)]{we have that $\Gamma$ extends to a Spin$^c$-structure on $T(\mathbb{X}/B)$ (for instance, if $H^3(B , \mathbb{Z}) = 0$).}
\end{itemize}
Then $w_d( H^+(\mathbb{X}/B) ) = 0$.
\end{theorem}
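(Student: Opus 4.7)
The plan is to exhibit the mod 2 count of reducible solutions of the family Seiberg-Witten equations as $\langle w_d(H^+(\mathbb{X}/B)), [B]_{\mathbb{Z}_2}\rangle$ for a generic perturbation, and then to show this count is even by a characteristic-number calculation on a manifold with boundary naturally associated with the family moduli space.

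Step 1 (setup). Using Corollary \ref{cor:generic}, fix a family of perturbations $\mu = \{\mu_b\}$ so that $[\mu]$ meets $\mathcal{W}$ transversally in finitely many points $b_1,\dots,b_N \in B$, each reducible $(A_i,0)$ over $b_i$ has $Coker(D_{A_i}) = 0$, and every irreducible solution is unobstructed. Since $[\mu]$ and $\mathcal{W}$ are transverse sections of the rank-$d$ bundle $H^+(\mathbb{X}/B) \to B$, their intersection number mod 2 equals $\langle w_d(H^+(\mathbb{X}/B)), [B]_{\mathbb{Z}_2}\rangle \equiv N \pmod 2$; restricting to a single connected component of $B$ if necessary, vanishing of $w_d$ is equivalent to $N$ being even.

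Step 2 (local model near reducibles). Put $k := (c(\Gamma)^2 - \sigma(X))/8 = ind_{\mathbb{C}}(D_A)$; by hypothesis $k\ge 1$, and $Coker(D_{A_i}) = 0$ gives $\dim_{\mathbb{C}} Ker(D_{A_i}) = k$. By Corollary \ref{cor:obstructionspace}, the families moduli space near $(A_i, 0)$ is modelled via its Kuranishi chart by $Ker(D_{A_i})/S^1$, where $S^1$ acts as the residual constant gauge stabilizer by scalar multiplication; this quotient is the open cone on $\mathbb{CP}^{k-1}$. Away from reducibles, the moduli space is a smooth manifold of dimension $2k-1$. Excising a small open cone neighborhood of each reducible yields a compact smooth (possibly unoriented) manifold $\mathcal{M}^{**}$ with boundary $\partial \mathcal{M}^{**} = \coprod_{i=1}^N L_i$, each $L_i$ a copy of $\mathbb{CP}^{k-1}$.

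Step 3 (characteristic-number count). As $\partial \mathcal{M}^{**}$ bounds $\mathcal{M}^{**}$, its unoriented cobordism class is zero, so all Stiefel-Whitney numbers of $\partial\mathcal{M}^{**}$ taken with coefficients in bundles that extend over $\mathcal{M}^{**}$ vanish mod 2. In case (2), the line bundle $L_B \to \mathcal{C}^*_B(\Gamma)$ of Section \ref{sec:fmoduli} is globally defined and restricts on each $L_i$ to the tautological line bundle of $\mathbb{CP}^{k-1}$, so $\langle w_2(L_B)^{k-1}, [L_i]_{\mathbb{Z}_2}\rangle = 1$; summing over $i$ and using the null-cobordism yields $N \equiv 0 \pmod 2$. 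In case (1), $k$ is odd, and the tangential Stiefel-Whitney number $\langle w_{2k-2}(T\mathbb{CP}^{k-1}), [\mathbb{CP}^{k-1}]_{\mathbb{Z}_2}\rangle = \chi(\mathbb{CP}^{k-1}) = k \equiv 1 \pmod 2$; applying the null-cobordism argument to $w_{2k-2}(T\partial \mathcal{M}^{**})$ again gives $N \equiv 0 \pmod 2$.

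The main technical hurdle is case (1): when $\Gamma$ does not extend to a families Spin$^c$-structure, the line bundle $L_B$ need not be defined globally, its obstruction being $W_3(T(\mathbb{X}/B))$. Replacing the $L_B$-calculation by one using tangential Stiefel-Whitney numbers neatly sidesteps this obstruction, and the mod 16 condition on $c(\Gamma)^2 - \sigma(X)$ is precisely what forces the top Stiefel-Whitney number of the link $\mathbb{CP}^{k-1}$ to be non-zero mod 2.
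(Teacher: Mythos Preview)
Your proposal is correct and follows essentially the same approach as the paper. The only cosmetic difference is in case~(1): the paper simply invokes that $\mathbb{CP}^{k-1}$ is a non-trivial element of the unoriented cobordism ring when $k-1$ is even, whereas you exhibit this concretely via the top tangential Stiefel--Whitney number $\langle w_{2k-2}(T\mathbb{CP}^{k-1}),[\mathbb{CP}^{k-1}]_{\mathbb{Z}_2}\rangle \equiv \chi(\mathbb{CP}^{k-1}) = k \pmod 2$.
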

\begin{proof}
Let $p : \mathcal{M} \to B$ be the families moduli space. We assume that $\mu$ is chosen to satisfy the conditions $(1),(2)$ and $(3)$ of Corollary \ref{cor:generic}. Therefore, $\mathcal{M}$ is smooth away from reducible solutions, by Corollary \ref{cor:generic} $(3)$. From by Corollary \ref{cor:generic} $(1)$, we have that $[\mu]$ meets $\mathcal{W}$ transversally in a finite number of points. Let $N$ be the number of reducible solutions. Then $w_d( H^+(\mathbb{X}/B) ) = N \; ({\rm mod} \; 2)$. Around each of the reducible solutions $(A_i , 0)$, the families obstruction space is $\hat{H}^2(A_i , 0) \cong Coker(D_{A_i})$, by Corollary \ref{cor:obstructionspace}. But $Coker(D_{A_i}) = 0$ by Corollary \ref{cor:generic} (2), hence $\hat{H}^2(A_i , 0) = 0$. It now follows from the Kuranishi model for the families moduli space around $(A_i , 0)$ that a neighbourhood of $(A_i,0)$ in $\mathcal{M}$ is homeomorphic to a cone over $\mathbb{CP}^{m-1}$, where $m = ( c^2(\Gamma) - \sigma(X) )/8$.\\

In case (1), we have that $m-1$ is even. The irreducible moduli space $\mathcal{M}^*$ gives rise to an unoriented null-cobordism of $N$ copies of $\mathbb{CP}^{m-1}$. But if $m-1$ is even, then $\mathbb{CP}^{m-1}$ is a non-trivial element in the unoriented cobordism ring, hence $N$ must be even and hence $w_d( H^+(\mathbb{X}/B) ) = 0$.\\

In case (2), we again consider the unoriented null-cobordism of $N$ copies of $\mathbb{CP}^{m-1}$ obtained from $\mathcal{M}^*$. Let $\mathcal{C}^*_B(\Gamma)$ be the families reduced configuration space modulo gauge transformations and $L_B \to \mathcal{C}^*_B(\Gamma)$ the line bundle constructed as before. The restriction of $L_B$ to $\mathcal{M}^*$ defines a line bundle $L_B \to \mathcal{M}^* $ whose restriction to each copy of $\mathbb{CP}^{m-1}$ around a reducible solution is a generator of $H^2( \mathbb{CP}^{m-1} , \mathbb{Z}_2)$, as can be seen from the Kuranishi model. The pairing of the mod $2$ cohomology class $c_1(L_B)^{m-1}$ against the $N$ copies of $\mathbb{CP}^{m-1}$ must be zero mod $2$, because of the null-cobordism. However, each copy of $\mathbb{CP}^{m-1}$ pairs with $c_1(L_B)^{m-1}$ to give $1 \; ({\rm mod} \; 2)$. Hence $N$ must be even.
\end{proof}

\begin{remark}\label{r:crestriction}
Suppose we are in the setting of Theorem \ref{t:hplusobstruction}. By monodromy invariance, $c = c(\Gamma)$ defines a section of $H^2( \mathbb{X}/B , \mathbb{R})$. If $c(\Gamma)^2 \ge 0$ and $c(\Gamma) \neq 0$, then the $L^2$-projection of $c(\Gamma)$ to $H^+(\mathbb{X}/B)$ gives a non-vanishing section, which automatically implies $w_d( H^+(\mathbb{X}/B) ) = 0$. Therefore Theorem \ref{t:hplusobstruction} is a non-trivial statement only if $c(\Gamma)^2 < 0$ or $c(\Gamma) = 0$. In such cases we also have $\sigma(X) < 0$.
\end{remark}

\section{Obstructions for actions by involutive diffeomorphisms}\label{sec:z2}

Let $f : X \to X$ be an orientation preserving diffeomorphism of $X$ with finite order $k$. Then $f$ generates an action of the finite cyclic group $\mathbb{Z}_k$ on $X$ by diffeomorphisms. We will use Theorem \ref{t:hplusobstruction} to obtain obstructions to the existence of such actions on $X$ under assumptions on how the group acts on $H^2(X , \mathbb{Z})$.\\

In this section we consider the case of involutions ($k=2$) and in the following section the case of cyclic groups of even order greater than $2$. For involutions we have:

\begin{proposition}\label{p:involution}
Let $X$ be a compact, oriented, smooth $4$-manifold with $b_1(X) = 0$ and $b^+(X) > 0$. Suppose that $f : X \to X$ is an orientation preserving involutive diffeomorphism and suppose that $\Gamma \in \mathcal{S}(X)$ is an $f$-invariant Spin$^c$-structure such that $c(\Gamma)^2 > \sigma(X)$. Then for any $f$-invariant maximal positive definite subspace $V \subseteq H^2( X , \mathbb{R})$, there is some non-zero $v \in V$ with $f(v) = v$.
\end{proposition}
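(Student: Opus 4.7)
The plan is to argue by contradiction via Theorem \ref{t:hplusobstruction}, applied to a mapping-torus-type family over $\mathbb{RP}^d$ with $d = b^+(X)$. Suppose $V \subseteq H^2(X, \mathbb{R})$ is an $f$-invariant maximal positive definite subspace on which $f$ has no non-zero fixed vectors. Since $f^2 = 1$, this forces $f|_V = -\mathrm{id}$, so $V$ lies in the $(-1)$-eigenspace of $f^*$ on $H^2(X,\mathbb{R})$.

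First I would take $\tilde B = S^d$ with the antipodal $\mathbb{Z}_2$-action and form the smooth fibre bundle $\pi \colon \mathbb{X} = (X \times S^d)/\mathbb{Z}_2 \to \mathbb{RP}^d$, where $\mathbb{Z}_2$ acts diagonally via $f$ on $X$ and antipodally on $S^d$. Since $f$ is orientation preserving, this is a fibrewise oriented family over a compact base of dimension $d = b^+(X)$. The Spin$^c$-structure $\Gamma$ is monodromy invariant because it is $f$-invariant, and $c(\Gamma)^2 > \sigma(X)$ is assumed. For $d \neq 3$ we have $H^3(\mathbb{RP}^d, \mathbb{Z}) = 0$, and Proposition \ref{p:famspinc} directly yields condition (2) of Theorem \ref{t:hplusobstruction}; the case $d = 3$ is the one subtlety, to be handled either by appealing to condition (1) when applicable, or by noting that the spectral sequence argument of Proposition \ref{p:famspinc} already forces every $E_\infty^{p,q}$-component of $W_3(T(\mathbb{X}/B))$ with $p < 3$ to vanish, so that only the $E_\infty^{3,0}$ piece needs a direct check.

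The main computation is then the following. The local system $R^2 \pi_* \mathbb{R}$ is the flat bundle $H^2(X,\mathbb{R}) \times_{\mathbb{Z}_2} S^d$, and $V$ descends to a flat, maximal positive definite subbundle $\underline V \subset R^2 \pi_* \mathbb{R}$. Because the space of maximal positive definite subbundles is contractible, $\underline V \cong H^+(\mathbb{X}/B)$ as unoriented real vector bundles. Under the hypothesis $f|_V = -\mathrm{id}$, the $\mathbb{Z}_2$-representation $V$ is a direct sum of $d$ copies of the sign representation, and therefore
\[
H^+(\mathbb{X}/B) \;\cong\; \gamma^{\oplus d},
\]
where $\gamma \to \mathbb{RP}^d$ is the tautological line bundle. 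With $x \in H^1(\mathbb{RP}^d, \mathbb{Z}_2)$ the generator, $w(H^+(\mathbb{X}/B)) = (1+x)^d$, so $w_d(H^+(\mathbb{X}/B)) = x^d \neq 0$, contradicting Theorem \ref{t:hplusobstruction}.

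The main obstacle I anticipate is the mod-$2$ Spin$^c$ bookkeeping in the $d=3$ edge case, where $H^3(B,\mathbb{Z})$ does not vanish and Proposition \ref{p:famspinc} does not immediately apply; apart from that, the proof is a clean marriage of the vanishing theorem with the elementary observation that flat bundles over $\mathbb{RP}^d$ pulled back from the sign representation have nonzero top Stiefel-Whitney class.
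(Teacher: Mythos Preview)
Your argument matches the paper's almost exactly: form $\mathbb{X} = X \times_{\mathbb{Z}_2} S^d \to \mathbb{RP}^d$, identify $H^+(\mathbb{X}/B)$ with $d$ copies of the tautological line bundle when $f|_V = -\mathrm{id}$, and read off $w_d = x^d \neq 0$ to contradict Theorem \ref{t:hplusobstruction}. The only substantive difference is the $d=3$ edge case, which you correctly flag as incomplete.

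The paper resolves this not by a spectral-sequence chase but by a clean stabilisation trick: enlarge to the family $\mathbb{X}' = X \times_{\mathbb{Z}_2} S^4 \to \mathbb{RP}^4$. Since $H^3(\mathbb{RP}^4,\mathbb{Z}) = 0$, Proposition \ref{p:famspinc} applies directly and $\Gamma$ extends to a Spin$^c$-structure on $T(\mathbb{X}'/\mathbb{RP}^4)$; restricting along the standard inclusion $\mathbb{RP}^3 \hookrightarrow \mathbb{RP}^4$ then gives the desired Spin$^c$-structure on $T(\mathbb{X}/\mathbb{RP}^3)$. This replaces your proposed alternatives --- condition (1), which would not cover all admissible $\Gamma$, or a direct analysis of the $E_\infty^{3,0}$ contribution --- with a uniform argument that works for every $d$.
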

\begin{proof}
To apply Theorem \ref{t:hplusobstruction}, we need a family of dimension $d = b^+(X)$. Consider the $d$-sphere $S^d$ and let $\mathbb{Z}_2$ act on $S^d$ by the antipodal map. We then take our family to be $\mathbb{X} = X \times_{\mathbb{Z}_2} S^d$ and $B = S^d/\mathbb{Z}_2 = \mathbb{RP}^d$ with $\pi : \mathbb{X} \to \mathbb{RP}^d$ the obvious projection. We have $\pi_1(B) = \mathbb{Z}_2$ and the monodromy action on $\mathcal{S}(X)$ and on the cohomology of $X$ is just the action induced by $f$. Choose a metric $g$ on $X$. By averaging we can assume $g$ is $\mathbb{Z}_2$-invariant and thus determines a fibrewise metric on the family $\mathbb{X}/B$. Furthermore, as $g$ is $\mathbb{Z}_2$-invariant, we have that $f$ acts as an involution on $H^+(X)_g$ (where the subscript means we take forms which are self-dual with respect to $g$). Then $H^+(\mathbb{X}/B)$ is just the induced flat bundle $H^+(\mathbb{X}/B) = H^+(X)_g \times_{\mathbb{Z}_2} S^d$ over $\mathbb{RP}^d$. The mod $2$ cohomology ring of $\mathbb{RP}^d$ is given by $\mathbb{R}[x]/ \langle x^{d+1} \rangle$ where $x = w_1(\mathbb{R_-}) \in H^1( \mathbb{RP}^d , \mathbb{Z}_2)$ is the first Stiefel-Whitney class of the unique non-trivial line real line bundle $\mathbb{R}_-$ over $\mathbb{RP}^d$. Suppose that $f$ acts on $H^+(X)_g$ with $u$ eigenvalues equal to $1$ and $v$ eigenvalues equal to $-1$ (so $d = u + v$). The bundle $H^+(\mathbb{X}/B)$ is isomorphic to $\mathbb{R}^u \oplus \mathbb{R}^v_-$ which has total Stiefel-Whitney class $(1+x)^v$. It follows that $H^+(\mathbb{X}/B)$ has non-trivial $d$-th Stiefel-Whitney class if and only if $u = 0$ and $v = d$, i.e. if and only if $f$ acts on $H^+(X)_g$ as $-Id$. Note that $H^3( \mathbb{RP}^d , \mathbb{Z} ) = 0$, except when $d = 3$. So in all cases where $d \neq 3$, an $f$-invariant Spin$^c$-structure will extend to a Spin$^c$-structure on $T(\mathbb{X}/B)$, so we can apply Theorem \ref{t:hplusobstruction}. In the case $d=3$ it is still true that any $f$-invariant Spin$^c$-structure on $X$ extends to a Spin$^c$-structure on $T(\mathbb{X}/B)$. To see this, one first considers the larger family $\mathbb{X}' = X \times_{\mathbb{Z}_2} S^4 \to \mathbb{RP}^4$ over $\mathbb{RP}^4$ and then restricts the family to $\mathbb{RP}^3 \subset \mathbb{RP}^4$ embedded in the standard way. The result now follows from Theorem \ref{t:hplusobstruction}.
\end{proof}

\section{Obstructions for actions by periodic diffeomorphisms of even order}\label{sec:z2m}

Let $k = 2m$ be an even integer greater than $2$. Suppose that $f : X \to X$ is an orientation preserving diffeomorphism of order $k$. Our obstruction limits the possible actions of $f$ on an $f$-invariant maximal positive definite subspace of $H^2(X , \mathbb{R})$. Recall that a real representation of $\mathbb{Z}_k$ is a direct sum of the following irreducible representations:
\begin{itemize}
\item{The trivial representation $\mathbb{R}$.}
\item{The sign representation $\mathbb{R}_-$, where the generator of $\mathbb{Z}_k$ acts by $-1$.}
\item{Let $d$ be an integer, $0 < d < k$, $d \neq m$. Then we have a $1$-dimensional complex representation $\mathbb{C}_d$ of $\mathbb{Z}_k$ where the generator acts as scalar multiplication by $e^{2\pi i \frac{d}{k}}$. The underlying real vector space of $\mathbb{C}_d$ is a rank $2$ irreducible representation of $\mathbb{Z}_k$. Notice that $\mathbb{C}_d^* \cong \mathbb{C}_{k-d}$ and that as real representations $\mathbb{C}_d \cong \mathbb{C}_{k-d}$ via complex conjugation. Thus we may assume $0 < d < m$.}
\item{Note however that as {\em oriented representations} $\mathbb{C}_d \ncong \mathbb{C}_{k-d}$, as these representations are only isomorphic by an orientation reversing map.}
\end{itemize}

\begin{proposition}\label{p:cyclic}
Let $X$ be a compact, oriented, smooth $4$-manifold with $b_1(X) = 0$ and $b^+(X) = 2u+1$ odd. Suppose that $f : X \to X$ is an orientation preserving diffeomorphism of order $k$ and suppose that $\Gamma \in \mathcal{S}(X)$ is an $f$-invariant Spin$^c$-structure such that $c(\Gamma)^2 > \sigma(X)$. Then for any $f$-invariant maximal positive definite subspace $V \subseteq H^2( X , \mathbb{R})$, the representation of $\mathbb{Z}_k$ given by $f|_V$ is not of the form
\[
\mathbb{R}_- \oplus \mathbb{C}_{d_1} \oplus \cdots \oplus \mathbb{C}_{d_u},
\]
with $d_i$ odd and $0 < d_i < m$ for all $i$. If $m$ is odd (that is, if $k = 2 \; ({\rm mod} \; 4)$), then more generally the representation of $\mathbb{Z}_k$ given by $f|_V$ is not of the form
\[
\mathbb{R}_-^{2a+1} \oplus \mathbb{C}_{d_1} \oplus \cdots \oplus \mathbb{C}_{d_b},
\]
with $d_i$ odd, $0 < d_i < m$ for all $i$ and where $a+b = u$.
\end{proposition}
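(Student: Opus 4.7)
The plan is to apply Theorem \ref{t:hplusobstruction} to a family built from a free $\mathbb{Z}_k$-action on a sphere, generalising the construction in Proposition \ref{p:involution}. Let $\mathbb{Z}_k \subset S^1$ act diagonally on $\mathbb{C}^{u+1}$ and restrict to a free action on $S^{2u+1}$; let $B = S^{2u+1}/\mathbb{Z}_k$ be the resulting lens space of dimension $d = b^+(X)$, and form $\pi : \mathbb{X} = X \times_{\mathbb{Z}_k} S^{2u+1} \to B$ with $\mathbb{Z}_k$ acting on $X$ through $f$. Averaging produces an $f$-invariant Riemannian metric $g$ on $X$ with $H^+(X)_g = V$, and with this choice the bundle $H^+(\mathbb{X}/B)$ is the flat bundle $H^+(X)_g \times_{\mathbb{Z}_k} S^{2u+1}$ over $B$ associated to the prescribed $\mathbb{Z}_k$-representation on $V$. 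Since the integral cohomology of $B$ is concentrated in degrees $0$, positive even integers at most $2u$, and $2u+1$, we have $H^3(B; \mathbb{Z}) = 0$, so Proposition \ref{p:famspinc} extends $\Gamma$ to a Spin$^c$-structure on $T(\mathbb{X}/B)$, placing us in condition (2) of Theorem \ref{t:hplusobstruction}. The problem thus reduces to computing $w_d(H^+(\mathbb{X}/B))$ and showing it is nonzero when $V$ has one of the forbidden forms.

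The computation splits according to $k \bmod 4$. Each sign summand $\mathbb{R}_-$ of $V$ contributes a factor $(1+x)$ where $x \in H^1(B; \mathbb{Z}_2)$ is the class pulled back from the generator of $H^1(B\mathbb{Z}_2; \mathbb{Z}_2)$; each complex summand $\mathbb{C}_{d_i}$ with $d_i$ odd contributes $(1+\bar\eta)$ where $\bar\eta \in H^2(B; \mathbb{Z}_2)$ is the mod $2$ reduction of the generator of $H^2(B; \mathbb{Z}) \cong \mathbb{Z}_k$. When $m$ is even ($k \equiv 0 \bmod 4$), $H^*(B; \mathbb{Z}_2) = \mathbb{Z}_2[x,y]/(x^2, y^{u+1})$ with $\bar\eta = y$ and top class $xy^u$; when $m$ is odd ($k \equiv 2 \bmod 4$), the decomposition $\mathbb{Z}_{2m} \cong \mathbb{Z}_2 \times \mathbb{Z}_m$ combined with vanishing of the mod-$2$ cohomology of $B\mathbb{Z}_m$ yields $H^*(B; \mathbb{Z}_2) = \mathbb{Z}_2[x]/(x^{2u+2})$ with $\bar\eta = x^2$ and top class $x^{2u+1}$.

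In case (i) the total Stiefel-Whitney class is $(1+x)(1+\bar\eta)^u$, whose top-degree coefficient is $\binom{u}{u} = 1$ in either ring. In case (ii) with $m$ odd, the identity $(1+x)^2 = 1+x^2$ modulo $2$ lets me rewrite $(1+x)^{2a+1}(1+x^2)^b = (1+x)(1+x^2)^{a+b} = (1+x)(1+x^2)^u$, again with coefficient $1$ on $x^{2u+1}$. In both situations $w_d(H^+(\mathbb{X}/B)) \neq 0$, contradicting Theorem \ref{t:hplusobstruction} and proving the proposition. The main obstacle I expect is managing the two distinct mod-$2$ cohomology ring structures of the lens space depending on $k \bmod 4$; the identification $\bar\eta = x^2$ when $m$ is odd is precisely what allows the stronger case (ii) to go through there but not when $m$ is even.
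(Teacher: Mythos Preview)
Your argument follows the same route as the paper: build the family over the lens space $B = S^{2u+1}/\mathbb{Z}_k$, identify $H^+(\mathbb{X}/B)$ with the flat bundle associated to $V$, compute its top Stiefel--Whitney class in the mod $2$ cohomology ring of $B$, and invoke Theorem~\ref{t:hplusobstruction}. Your treatment of the ring structure (splitting into the cases $k\equiv 0$ and $k\equiv 2 \pmod 4$) is a bit more explicit than the paper's, but the content is the same: the paper just records that $\alpha^2 = \beta$ when $m$ is odd, which is your identity $\bar\eta = x^2$.

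There is one genuine gap. Your claim that $H^3(B;\mathbb{Z}) = 0$ fails when $u=1$, since then $2u+1 = 3$ and $H^3(L^3;\mathbb{Z}) = \mathbb{Z}$. So Proposition~\ref{p:famspinc} does not directly apply to extend $\Gamma$ when $b^+(X)=3$. The paper handles this by the same trick used in Proposition~\ref{p:involution}: embed the $3$-dimensional lens space into a higher-dimensional one (e.g.\ $L^5$), extend $\Gamma$ there, and restrict back. You should add this step.

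A smaller point: averaging the metric does not give $H^+(X)_g = V$ on the nose; it only gives some $f$-invariant maximal positive definite subspace. This is harmless, since any two such subspaces are isomorphic as $\mathbb{Z}_k$-representations (both are $f$-equivariantly isomorphic to $H^2(X,\mathbb{R})/W$ for any $f$-invariant maximal negative definite $W$), and the isomorphism class of $H^+(\mathbb{X}/B)$ as a bundle depends only on this representation. But you should say so rather than assert equality.
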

\begin{proof}
As in the proof of Proposition \ref{p:involution}, we need a family of dimension $d = b^+(X) = 2u+1$. Consider a lens space $L^{2u+1} = L(k ; 1 , \dots , 1) = S^{2u+1}/\mathbb{Z}_k$. We take our family to be $\mathbb{X} = X \times_{\mathbb{Z}_k} S^d$ and $B = L^{2u+1}$, so that $\pi_1(B) = \mathbb{Z}_k$. Choose a metric $g$ on $X$. By averaging we can assume $g$ is $\mathbb{Z}_k$-invariant. Then $f$ acts on $H^+(X)_g$ and $H^+(\mathbb{X}/B)$ is the induced flat bundle $H^+(\mathbb{X}/B) = H^+(X)_g \times_{\mathbb{Z}_k} S^d$. As $k$ is even, one finds that the mod $2$ cohomology of $L^{2u+1}$ is $\mathbb{Z}_2$ in degrees $0 , 1 , \dots , 2u+1$. Let $\alpha \in H^1(L^{2u+1} , \mathbb{Z}_2)$ and $\beta \in H^2(L^{2u+1} , \mathbb{Z}_2)$ be generators in degrees $1$ and $2$. Then for $0 \le j \le 2u+1$, we have that $H^j( L^{2u+1} , \mathbb{Z}_2)$ is generated by $\beta^i$ if $j = 2i$ and by $\alpha \cup \beta^i$ if $j = 2i+1$. To each representation $V$ of $\mathbb{Z}_k$, we obtain an associated flat bundle $E(V) = V \times_{\mathbb{Z}_k} S^d$ over $L^{2u+1}$. The sign representation $\mathbb{R}_-$ defines the unique non-trivial real line bundle on $L^{2u+1}$. Therefore the total Stiefel-Whitney class of $E(\mathbb{R}_-)$ is $w( E(\mathbb{R}_-) ) = 1 + \alpha$. Note that the lens space $L^{2u+1}$ is a circle bundle $q : L^{2u+1} \to \mathbb{CP}^u$ over $\mathbb{CP}^u$. The representation $\mathbb{C}_d$ defines a complex line bundle on $L^{2u+1}$ which is easily seen to be $E(\mathbb{C}_d) = q^* \mathcal{O}(d)$. Moreover, one sees via the Gysin sequence for $q : L^{2u+1} \to \mathbb{CP}^u$ that the first Chern class of $q^* \mathcal{O}(1)$ taken mod $2$ equals $\beta$. It follows that the total Stiefel-Whitney class of $E(\mathbb{C}_d)$ is $w( E(\mathbb{C}_d) ) = (1 + d \beta)$. Now if the representation of $\mathbb{Z}_k$ on $H^+(X)_g$ given by $f$ is of the form
\[
\mathbb{R}_- \oplus \mathbb{C}_{d_1} \oplus \cdots \oplus \mathbb{C}_{d_u},
\]
with $d_i$ odd and $0 < d_i < m$ for all $i$. Then it follows that the total Stiefel-Whitney class of the associated bundle $H^+(\mathbb{X}/B)$ is
\[
w( H^+( \mathbb{X}/B ) ) = (1+\alpha)(1+d_1 \beta) \cdots (1 + d_u \beta)
\]
and in particular
\[
w_d( H^+( \mathbb{X}/B ) ) = d_1 d_2 \cdots d_u \alpha \cup \beta^u.
\]
Thus, if $d_1 , \dots , d_u$ are all odd then $w_d( H^+(\mathbb{X}/B) ) \neq 0$. In the case that $m$ is odd then it can be shown that $\alpha^2 = \beta$. Therefore if the action of $\mathbb{Z}_k$ on $H^+(X,g)$ given by $f$ is of the form
\[
\mathbb{R}_-^{2a+1} \oplus \mathbb{C}_{d_1} \oplus \cdots \oplus \mathbb{C}_{d_b},
\]
with $d_i$ odd, $0 < d_i < m$ for all $i$ and where $a+b = u$, then by a computation similar to the above, we find that $w_d( H^+(\mathbb{X}/B) ) \neq 0$.\\

Next, we observe that $H^3( L^{2u+1} , \mathbb{Z} ) = 0$, except when $2u+1 = 3$. So in all cases where $2u+1 \neq 3$, an $f$-invariant Spin$^c$-structure will extend to a Spin$^c$-structure on $T(\mathbb{X}/B)$. However, as in the proof of Proposition \ref{p:involution}, when $2u+1 = 3$, we can embed our family into a larger family given by a higher-dimensional lens space, where it is clear that the Spin$^c$-structure extends. We see that Theorem \ref{t:hplusobstruction} can be applied in all cases and so the result follows.
\end{proof}

\section{Obstructions for actions by commuting diffeomorphisms}\label{sec:freeab}

Let $f_1 , \dots , f_d : X \to X$ be $d$ commuting orientation preserving diffeomorphisms of $X$ and assume that $d = b^+(X)$. Then $f_1 , \dots , f_d$ generate an action of the group $\mathbb{Z}^d$ on $X$ by diffeomorphisms. Since the group $\mathbb{Z}^d$ is non-compact, we can not assume that $X$ has a $\mathbb{Z}^d$-invariant metric. As a replacement, we will need to assume that $f_1 , \dots , f_d$ preserve a maximal positive definite subspace $V \subseteq H^2( X , \mathbb{R})$. Thus $V$ has the structure of a real orthogonal representation of $\mathbb{Z}^d$. In order to get a non-trivial obstruction, we are lead to consider only those representations where each $f_i$ acts on $V$ with eigenvalues $\pm 1$. For such a representation $V$ can be simultaneously diagonalised. That is, there exists a basis in which
\begin{equation}\label{equ:diag}
f_i = diag( (-1)^{\epsilon^1_i} , \dots , (-1)^{\epsilon^d_i} ),
\end{equation}
for some $\epsilon^j_i \in \mathbb{Z}_2 = \{ 0 , 1\}$.

\begin{proposition}\label{p:commutingdiffeos}
Let $X$ be a compact, oriented, smooth $4$-manifold with $b_1(X) = 0$ and $b^+(X) = d > 0$. Suppose that $f_1 , \dots , f_d : X \to X$ are commuting diffeomorphisms and suppose that $\Gamma \in \mathcal{S}(X)$ is a Spin$^c$-structure on $X$ which is preserved by $f_1 , \dots , f_d$ and satisfies $c(\Gamma)^2 > \sigma(X)$. If $d \ge 3$, assume in addition that $c^2(\Gamma) - \sigma(X) = 8 \; ({\rm mod} \; 16)$. Suppose that $V \subseteq H^2(X , \mathbb{R})$ is a maximal positive definite subspace preserved by $f_1 , \dots , f_d$. Assume also that in some basis of $V$, the $f_i$ are given by Equation (\ref{equ:diag}). Then $det( \epsilon^j_i) = 0$, where we view $\epsilon^j_i$ as elements of the field $\mathbb{Z}_2 = \{0,1\}$.
\end{proposition}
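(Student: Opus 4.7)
The plan is to apply Theorem \ref{t:hplusobstruction} with base $B = T^d$, the $d$-dimensional torus. The key point is that $\mathbb{Z}^d$ acts freely on $\mathbb{R}^d$ by translation with compact quotient $T^d$, so even though the group is non-compact we obtain a compact base of the correct dimension $d = b^+(X)$. I form the associated family $\mathbb{X} = X \times_{\mathbb{Z}^d} \mathbb{R}^d \to T^d$, whose monodromy representation $\pi_1(T^d) = \mathbb{Z}^d \to \mathrm{Diff}^+(X)$ is exactly the action generated by $f_1,\dots,f_d$. The hypothesis that $\Gamma$ is preserved by each $f_i$ ensures that $\Gamma$ is monodromy invariant.

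Since the $\mathbb{Z}^d$-invariant subspace $V \subseteq H^2(X,\mathbb{R})$ is maximal positive definite, the associated flat subbundle $\mathbb{V} = V \times_{\mathbb{Z}^d} \mathbb{R}^d \subseteq H^2(\mathbb{X}/B,\mathbb{R})$ is a maximal positive definite subbundle. Since the space of such subbundles is contractible, we may identify $H^+(\mathbb{X}/B) \cong \mathbb{V}$ as real vector bundles. In the diagonalising basis of $V$, this decomposes as $\mathbb{V} = \bigoplus_{j=1}^d L_j$, where $L_j$ is the real line bundle corresponding to the character $\chi_j : \mathbb{Z}^d \to \{\pm 1\}$ defined by $\chi_j(e_i) = (-1)^{\epsilon^j_i}$.

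Next I compute Stiefel-Whitney classes. Writing $x_1,\dots,x_d$ for the standard generators of $H^1(T^d,\mathbb{Z}_2)$, the mod $2$ cohomology ring $H^*(T^d,\mathbb{Z}_2) = \Lambda_{\mathbb{Z}_2}(x_1,\dots,x_d)$ is an exterior algebra. Under the identification of $H^1(T^d,\mathbb{Z}_2)$ with $\mathrm{Hom}(\mathbb{Z}^d,\mathbb{Z}_2)$, one reads off $w_1(L_j) = \sum_i \epsilon^j_i x_i$. Hence
\[
w_d(H^+(\mathbb{X}/B)) = \prod_{j=1}^d \Bigl( \sum_{i=1}^d \epsilon^j_i x_i \Bigr).
\]
Expanding this product and using $x_i^2 = 0$, only permutations survive, and over $\mathbb{Z}_2$ the sign of each permutation is irrelevant, so the product equals $\det(\epsilon^j_i) \cdot x_1 x_2 \cdots x_d$ in $H^d(T^d,\mathbb{Z}_2) \cong \mathbb{Z}_2$. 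This is the only nontrivial computation; it is the standard multilinear identity together with the observation that permanent and determinant agree mod $2$.

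Finally I verify the hypothesis of Theorem \ref{t:hplusobstruction}. For $d = 1,2$ we have $H^3(T^d,\mathbb{Z}) = 0$, so $\Gamma$ extends to a Spin$^c$-structure on $T(\mathbb{X}/B)$ via Proposition \ref{p:famspinc}, putting us in case (2). For $d \ge 3$, the hypothesis $c^2(\Gamma) - \sigma(X) = 8 \pmod{16}$ places us in case (1). Either way, the theorem gives $w_d(H^+(\mathbb{X}/B)) = 0$, and since $x_1 \cdots x_d$ generates $H^d(T^d,\mathbb{Z}_2)$, we conclude $\det(\epsilon^j_i) = 0$ in $\mathbb{Z}_2$, as required. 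The main subtlety is bookkeeping around the Spin$^c$ extension issue in dimensions $d \ge 3$, which is precisely what motivates the auxiliary mod $16$ hypothesis in the statement.
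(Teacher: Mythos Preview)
Your proof is correct and follows essentially the same approach as the paper: build the family $\mathbb{X} = X \times_{\mathbb{Z}^d} \mathbb{R}^d \to T^d$, identify $H^+(\mathbb{X}/B)$ with the flat bundle associated to $V$, compute $w_d$ as $\det(\epsilon^j_i)\,x_1\cdots x_d$ via the exterior algebra structure of $H^*(T^d,\mathbb{Z}_2)$, and invoke Theorem~\ref{t:hplusobstruction} using case~(2) for $d\le 2$ and case~(1) for $d\ge 3$. Your write-up is in fact a bit more explicit than the paper's in justifying the isomorphism $H^+(\mathbb{X}/B)\cong\mathbb{V}$ and in expanding the determinant identity.
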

\begin{proof}
Consider the family $\pi : \mathbb{X} \to B = T^d$ given by $\mathbb{X} = X \times_{\mathbb{Z}^d} \mathbb{R}^d$. Then $V$ determines a maximal positive definite sub-bundle $\widetilde{V}$ of $H^2( \mathbb{X}/B , \mathbb{R})$. If $d=1$ or $2$ then we can apply Theorem \ref{t:hplusobstruction} $(2)$, since $H^3(B,\mathbb{Z}) = 0$. If $d \ge 3$, then our assumptions are such that we may apply Theorem \ref{t:hplusobstruction} $(1)$. In either case we deduce that $w_d( \widetilde{V} ) = 0 \in H^d( T^d , \mathbb{Z}_2)$. Next view $T^d$ as the product of $d$ circles and $\pi_i : T^d \to S^1$ the projection to the $i$-th circle. Let $x \in H^1(S^1 , \mathbb{Z}_2) \cong \mathbb{Z}_2$ be the generator and set $x_i = \pi_i^*(x)$. Then $H^*( T^d , \mathbb{Z}_2)$ is an exterior algebra over $\mathbb{Z}_2$ in $x_1 , \dots , x_d$. Now one easily sees that if the $f_i$ act on $V$ according to Equation \ref{equ:diag}, then
\[
w_d( \widetilde{V} ) = ( \epsilon_1^1 x_1 + \cdots + \epsilon^1_d x_d) \cdots ( \epsilon_1^d x_1 + \cdots + \epsilon^d_d x_d) = det(\epsilon^j_i) x_1 \cdots x_d,
\]
so that $w_d( \widetilde{V} ) = 0$ if and only if $det(\epsilon^j_i) = 0$.
\end{proof}

\begin{remark}
Proposition \ref{p:commutingdiffeos} generalises Theorems 1.1 and 1.2 of \cite{na1} which concern non-smoothability of $\mathbb{Z}$ and $\mathbb{Z}^2$-actions on $4$-manifolds. However the assumptions placed on $X$ were more restrictive than for those of our proposition.
\end{remark}

\section{Obstructions to smooth $\mathbb{Z}_2 \times \mathbb{Z}_2$-actions}\label{sec:z2z2}

One can use families Seiberg-Witten theory to obtain an obstruction to smooth actions by a product of finite cyclic groups of even order. For simplicity, we will restrict ourselves to the simple case of $\mathbb{Z}_2 \times \mathbb{Z}_2$-actions. 

\begin{proposition}\label{p:z2z2}
Let $X$ be a compact, oriented, smooth $4$-manifold with $b_1(X) = 0$ and $b^+(X) > 0$. Suppose that $f_1 , f_2 : X \to X$ are orientation preserving involutive diffeomorphisms which commute and thus define a $\mathbb{Z}_2 \times \mathbb{Z}_2$-action. Suppose that $\Gamma \in \mathcal{S}(X)$ is a Spin$^c$-structure which is invariant under $f_1$ and $f_2$ which satisfies $c(\Gamma)^2 > \sigma(X)$ and $c(\Gamma)^2 - \sigma = 8 \; ({\rm mod} \; 16)$. Then for any maximal positive definite subspace $V \subseteq H^2( X , \mathbb{R})$ preserved by $f_1,f_2$, there is some non-zero $v \in V$ with $f_1(v) = f_2(v) = v$.
\end{proposition}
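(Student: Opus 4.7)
My plan is to adapt the method of Proposition \ref{p:involution} to $G = \mathbb{Z}_2 \times \mathbb{Z}_2$, using a product of real projective spaces as base. As a preliminary step, I will average to obtain a $G$-invariant metric $g$ on $X$; then $H^+(X)_g$ is a $G$-invariant maximal positive definite subspace of $H^2(X;\mathbb{R})$, and as in Proposition \ref{p:involution} the $G$-representation type of any such invariant subspace is determined by the signature data on the isotypical components of $H^2(X;\mathbb{R})$, so it suffices to produce a common fixed vector in $V = H^+(X)_g$. Arguing by contradiction, I will assume $V^G = \{v : f_1(v) = f_2(v) = v\}$ vanishes and decompose $V = V^{-+} \oplus V^{+-} \oplus V^{--}$ into the three nontrivial isotypical components under $G$, with dimensions $e_1, e_2, e_3$ satisfying $e_1 + e_2 + e_3 = d = b^+(X)$.

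To build the family, I set $a = e_1 + e_3$ and $b = e_2$, so $a + b = d$, and let $\tilde{B} = S^a \times S^b$ carry the free $G$-action where each generator acts antipodally on its own sphere and trivially on the other. Then $B = \mathbb{RP}^a \times \mathbb{RP}^b$ and $\mathbb{X} = X \times_G \tilde{B} \to B$ is a fibrewise oriented family over a compact $d$-manifold; the monodromy action factors through $G$, so the $G$-invariant Spin$^c$-structure $\Gamma$ is monodromy invariant. Writing $\alpha, \beta$ for the pulled-back generators of $H^1$ of the two projective factors, the three nontrivial real characters of $G$ correspond to real line bundles on $B$ with first Stiefel-Whitney classes $\alpha$, $\beta$, and $\alpha + \beta$, giving
$$w\bigl(H^+(\mathbb{X}/B)\bigr) = (1+\alpha)^{e_1}(1+\beta)^{e_2}(1+\alpha+\beta)^{e_3}.$$

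The heart of the argument is to verify $w_d(H^+(\mathbb{X}/B)) \neq 0$ in the one-dimensional group $H^d(B;\mathbb{Z}_2)$, which is spanned by $\alpha^{e_1+e_3}\beta^{e_2}$. With my choice of $(a,b)$, the constraints on the multi-index contributions to the monomial $\alpha^a \beta^b$ pin down $\alpha^{e_1}$ from the first factor, $\beta^{e_2}$ from the second, and $\alpha^{e_3}$ from the third (using $i \le e_1$ together with $p + q \le e_3$ to force $p = e_3$, $q = 0$), leaving a unique contribution of multinomial coefficient $1 \pmod 2$. Because $c(\Gamma)^2 > \sigma(X)$ and $c^2(\Gamma) - \sigma(X) \equiv 8 \pmod{16}$, I can then apply Theorem \ref{t:hplusobstruction}(1) to conclude $w_d(H^+(\mathbb{X}/B)) = 0$, contradicting the computation. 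I do not anticipate a serious obstacle: the only subtle point is the choice of $(a, b)$ that isolates a single nonvanishing monomial, and the degenerate cases where some $e_i$ vanishes collapse harmlessly to lower-dimensional subproducts.
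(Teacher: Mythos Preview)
Your proposal is correct and follows essentially the same approach as the paper: build a family over a product of real projective spaces, compute $w_d$ of the associated bundle from the isotypical decomposition of $V$, and derive a contradiction via Theorem \ref{t:hplusobstruction}(1). The only cosmetic difference is the choice of dimensions --- you take $(a,b)=(e_1+e_3,\,e_2)$ and kill terms using $\beta^{e_2+1}=0$, while the paper takes $(d_1,d_2)=(q,\,r+s)$ (i.e.\ $(e_1,\,e_2+e_3)$) and uses $x^{q+1}=0$; both pin down the single surviving monomial $\alpha^{a}\beta^{b}$, and your uniform treatment of the degenerate cases (including $b^+(X)=1$) avoids the paper's separate reduction to Proposition~\ref{p:involution}.
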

\begin{proof}
If $b^+(X) = 1$, then the result follows from Proposition \ref{p:involution} applied separately to $f_1$ and $f_2$. Thus assume $b^+(X) \ge 2$. Choose positive integers $d_1,d_2$ such that $d_2 + d_2 = d = b^+(X)$. Let $\mathbb{Z}_2 \times \mathbb{Z}_2$ act on $S^{d_1} \times S^{d_2}$, where the first $\mathbb{Z}_2$ acts on $S^{d_1}$ by the antipodal map and similarly the second $\mathbb{Z}_2$ acts by the antipodal map on $S^{d_2}$. Consider the family $\mathbb{X} = X \times_{\mathbb{Z}_2 \times \mathbb{Z}_2} S^{d_1} \times S^{d_2} \to B = \mathbb{RP}^{d_1} \times \mathbb{RP}^{d_2}$. Let $\widetilde{V} = V \times_{\mathbb{Z}_2 \times \mathbb{Z}_2} S^{d_1} \times S^{d_2} \to B$ be the vector bundle associated to $V$. Our assumptions are such that we may apply Theorem \ref{t:hplusobstruction} to deduce that $w_d(\widetilde{V}) = 0$. For $\epsilon_1 , \epsilon_2 \in \{ 1 , -1 \}$, let $\mathbb{R}_{\epsilon_1 , \epsilon_2}$ be the $1$-dimensional real representation of $\mathbb{Z}_2 \times \mathbb{Z}_2 = \langle f_1 \rangle \times \langle f_2 \rangle$, where $f_i$ acts as multiplication by $\epsilon_i$. Let $\widetilde{\mathbb{R}}_{\epsilon_1 , \epsilon_2} \to B$ be the real line bundle on $B$ associated to $\mathbb{R}_{\epsilon_1 , \epsilon_2}$. Note that $V$ can be written as a direct sum of such $1$-dimensional representations and thus $\widetilde{V}$ is a direct sum of the associated line bundles. Next, observe that $H^*(B , \mathbb{Z}_2) \cong \mathbb{Z}_2[x,y]/\langle x^{d_1+1}, y^{d_2+1} \rangle$, where $x,y \in H^1( B , \mathbb{Z}_2)$ correspond to the generators of $H^1( \mathbb{RP}^{d_1} , \mathbb{Z}_2)$ and $H^1( \mathbb{RP}^{d_2} , \mathbb{Z}_2)$ respectively. Clearly, we have:
\[
w( \widetilde{\mathbb{R}}_{-1 , 1} ) = 1+x, \quad w( \widetilde{\mathbb{R}}_{1 , -1} ) = 1+y, \quad w( \widetilde{\mathbb{R}}_{-1 , -1} ) = 1+x+y.
\]
Now suppose that $V$ decomposes as:
\[
V = \mathbb{R}_{1,1}^{p} \oplus \mathbb{R}_{-1,1}^q \oplus \mathbb{R}_{1,-1}^r \oplus \mathbb{R}_{-1,-1}^s
\]
where $p+q+r+s = d$. Then $\widetilde{V}$ similarly decomposes and thus
\[
w( \widetilde{V}) = (1+x)^q(1+y)^r(1+x+y)^s.
\]
In particular, we find that
\begin{equation}\label{equ:qrs}
w_d( \widetilde{V} ) = x^q y^r (x+y)^s.
\end{equation}
The statement of the proposition is equivalent to saying $p \neq 0$. Suppose on the contrary that $p = 0$, so that $q+r+s = d$. We can also assume that $q \neq 0$, for if $p = q = 0$, then $f_1$ acts as the identity on $V$ and the proposition follows by applying Proposition \ref{p:involution} to $f_2$. Similarly, we can assume $r \neq 0$. Let us choose $d_1 = q$ and $d_2 = r+s$. Then $x^{q+1} = 0$, so we deduce from (\ref{equ:qrs}) that $w_d(\widetilde{V}) = x^q y^{r+s} = x^{d_1} y^{d_2} \neq 0$, a contradiction. Hence $p \neq 0$, as required.
\end{proof}

\section{Some applications}\label{sec:appl}

We consider some applications of the obstruction theorems obtained in the previous sections. Many further variations of the examples presented here can be constructed.

\subsection{$\mathbb{Z}_2$-actions}

Let $f : H^2(X , \mathbb{Z}) \to H^2(X , \mathbb{Z})$ be an involutive isometry. Proposition \ref{p:involution} can be interpreted as an obstruction to realising $f$ as the map induced by an involutive diffeomorphism of $X$ (note that any such diffeomorphism must be orientation preserving, provided $b_2(X) \neq 0$). For simplicity we consider the case where $X$ is simply connected. Then an $f$-invariant Spin$^c$-structure is uniquely determined by an $f$-invariant integral lift $c \in H^2(X , \mathbb{Z})$ of $w_2(TX)$.\\

Case (i): $X$ is spin. Then $w_2(TX) = 0$ and $c = 0$ is an invariant integral lift. Suppose in addition that $\sigma(X) < 0$. Proposition \ref{p:involution} implies the following:
\begin{proposition}\label{prop:involminus1}
Let $X$ be a compact, smooth, simply-connected spin $4$-manifold with $\sigma(X) < 0$. Let $f : H^2(X , \mathbb{Z}) \to H^2(X , \mathbb{Z})$ be an involutive isometry. If $f$ acts on a maximal positive definite subspace of $H^2(X , \mathbb{R})$ as $-Id$, then $f$ can not be realised by an involutive diffeomorphism of $X$. 
\end{proposition}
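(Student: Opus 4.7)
The plan is a direct appeal to Proposition \ref{p:involution}, used contrapositively. Assume for contradiction that some involutive diffeomorphism $\tilde f : X \to X$ induces $f$ on $H^2$. The first thing I would observe is that $\tilde f$ must be orientation preserving: the hypotheses (simply connected, spin, $\sigma(X) < 0$) together with Donaldson's diagonalisation theorem rule out $b^+(X) = 0$ (a negative definite intersection form on a simply connected $4$-manifold is standard diagonal, hence odd, contradicting spin unless $b_2 = 0$, which is excluded by $\sigma(X) < 0$). So $b_2(X) > 0$ and the intersection form is non-trivial; since $f$ is assumed to preserve the intersection form with no sign change, $\tilde f^*$ must act trivially on $H^4(X, \mathbb{Z})$.

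Next I would produce the invariant Spin$^c$-structure needed for Proposition \ref{p:involution}. Because $X$ is spin, $w_2(TX) = 0$, and the class $c = 0 \in H^2(X, \mathbb{Z})$ is trivially $f$-invariant and lifts $w_2(TX)$. Since $X$ is simply connected, $H^2(X, \mathbb{Z})$ is torsion-free and Spin$^c$-structures on $X$ are in bijection with integral lifts of $w_2(TX)$, giving a unique Spin$^c$-structure $\Gamma$ with $c(\Gamma) = 0$. By uniqueness, $\Gamma$ is $\tilde f$-invariant. The hypothesis $\sigma(X) < 0$ yields $c(\Gamma)^2 = 0 > \sigma(X)$, and the conditions $b_1(X) = 0$, $b^+(X) > 0$ are in hand. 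Thus Proposition \ref{p:involution} applies to the involution $\tilde f$ and the Spin$^c$-structure $\Gamma$: for any $\tilde f$-invariant maximal positive definite subspace $V \subseteq H^2(X, \mathbb{R})$, it produces a non-zero $v \in V$ with $f(v) = v$. This directly contradicts the standing assumption $f|_V = -\mathrm{Id}$.

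There is no serious obstacle; the work is entirely in verifying the hypotheses of Proposition \ref{p:involution}. The spin condition is precisely what makes an $f$-invariant Spin$^c$-structure with trivial characteristic class available canonically (eliminating any monodromy cocycle obstruction in $H^1(\pi_1(B), H^2(X, \mathbb{Z})_2)$), and $\sigma(X) < 0$ does double duty: it supplies $c(\Gamma)^2 > \sigma(X)$ for the chosen $\Gamma$, and, via Donaldson plus spin, forces $b^+(X) > 0$ so that the orientation-preserving reduction in the opening paragraph is legitimate.
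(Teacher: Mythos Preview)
Your proof is correct and follows essentially the same route as the paper: apply Proposition \ref{p:involution} with the Spin$^c$-structure $\Gamma$ having $c(\Gamma)=0$, which is canonically available and $f$-invariant since $X$ is simply connected and spin, and note $c(\Gamma)^2=0>\sigma(X)$. The paper's argument is a one-line reduction to Proposition \ref{p:involution}; your additional verifications that $\tilde f$ is orientation preserving and that $b^+(X)>0$ (via Donaldson's theorem ruling out a negative definite even form) are details the paper leaves implicit but which indeed need to be checked.
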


As a special case we obtain:
\begin{corollary}
Let $X$ be a compact, smooth, simply-connected spin $4$-manifold with $\sigma(X) \neq 0$. Then there does not exist an involutive diffeomorphism of $X$ which acts as minus the identity on $H^2(X , \mathbb{Z})$.
\end{corollary}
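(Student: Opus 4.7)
The plan is to reduce the corollary to Proposition \ref{prop:involminus1} by a case split on the sign of $\sigma(X)$, using orientation reversal in one of the cases. Suppose for contradiction that $f : X \to X$ is an involutive diffeomorphism with $f^* = -\mathrm{Id}$ on $H^2(X,\mathbb{Z})$. Since $f^*$ preserves the intersection form (it sends $\alpha \cdot \beta$ to $(-\alpha)\cdot(-\beta) = \alpha\cdot\beta$), $f$ must be orientation-preserving. First I would handle the case $\sigma(X) < 0$: this is an immediate application of Proposition \ref{prop:involminus1}, because $f^* = -\mathrm{Id}$ on all of $H^2(X,\mathbb{R})$ certainly acts as $-\mathrm{Id}$ on any maximal positive-definite subspace.

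For the case $\sigma(X) > 0$, the plan is to pass to $\overline{X}$, the same smooth manifold with the opposite orientation. Then $\overline{X}$ is simply-connected, spin, and has $\sigma(\overline{X}) = -\sigma(X) < 0$. The map $f$ remains an involutive self-diffeomorphism of $\overline{X}$ and remains orientation-preserving (reversing orientation on both source and target does not change this). Since $f^*$ on $H^2$ depends only on the smooth structure, it still acts as $-\mathrm{Id}$ on $H^2(\overline{X},\mathbb{Z})$, so Proposition \ref{prop:involminus1} applied to $\overline{X}$ again yields a contradiction.

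The only subtle point — and the one I would verify carefully — is that the hypothesis $b^+(X) > 0$ implicit in Proposition \ref{p:involution}, and similarly $b^+(\overline{X}) = b^-(X) > 0$ in the second case, is automatic here. Indeed, if $X$ is simply-connected and spin with $b^+(X) = 0$, then $H^2(X,\mathbb{R})$ is negative-definite and even, so Donaldson's diagonalization theorem forces it to be trivial, giving $\sigma(X) = 0$; a symmetric argument rules out $b^-(X) = 0$. Thus $\sigma(X) \neq 0$ forces both $b^\pm(X) > 0$, and the above argument goes through in both sign cases.
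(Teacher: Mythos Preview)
Your proof is correct and matches the paper's implicit argument. The paper simply writes ``As a special case we obtain'' and gives no further proof, so the orientation-reversal step for $\sigma(X) > 0$ and the Donaldson check that $b^{\pm}(X) > 0$ are exactly the details a reader would need to supply; you have supplied them correctly.
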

The condition that $\sigma(X) \neq 0$ in the above corollary is necessary, for example the involution on $S^2 \times S^2$ which acts as the antipodal map on each $S^2$-factor acts as $-Id$ on $H^2( S^2 \times S^2 , \mathbb{Z})$.\\

Case (ii): $X$ is not spin. Then the intersection form of $X$ is diagonal. That is, there is an integral basis in which the intersection form is the diagonal matrix $diag( 1  , \dots , 1 , -1 , \dots , -1)$ with $b^+(X)$ entries equal to $1$ and $b^-(X)$ entries equal to $-1$. According to Remark \ref{r:crestriction}, in order to obtain non-trivial results we may as well assume $\sigma(X) < c^2 \le 0$. To obtain a non-existence result via Proposition \ref{p:involution}, we need to find involutive isometries $f : H^2(X , \mathbb{Z}) \to H^2(X , \mathbb{Z})$ such that $f(c) = c$ for some $c \in H^2(X , \mathbb{Z})$ with $c = (1  , 1 , \dots , 1) \; ({\rm mod} \; 2)$, $\sigma(X) < c^2 \le 0$ and such that $f$ acts as $-Id$ on some maximal positive definite subspace of $H^2(X , \mathbb{R})$. Such an isometry can not be realised as an involutive diffeomorphism of $X$. Suppose $e \in H^2(X , \mathbb{Z})$ satisfies $e^2 = 2$ and let $r_e : H^2(X , \mathbb{Z} ) \to H^2(X , \mathbb{Z})$ be the corresponding reflection
\[
r_e(x) = x - \langle x , e \rangle e,
\]
then $r_e$ is an involutive isometry and $r_e(e) = -e$. Using this, we obtain:
\begin{proposition}\label{prop:reflections}
Let $X$ be a compact, smooth, simply-connected, non-spin $4$-manifold with $b^+(X) > 0$ (so $X$ is homeomorphic to $\# m \mathbb{CP}^2 \# n \overline{\mathbb{CP}^2}$ for some $m,n$ with $m > 0$). Suppose that there exists $e_1 , \dots , e_m \in H^2(X , \mathbb{Z})$ with $e_i^2 = 2$ for all $i$ and $\langle e_i , e_j \rangle = 0$ for all $i \neq j$. Suppose there is also a characteristic $c \in H^2(X , \mathbb{Z})$ with $\sigma(X) < c^2 \le 0$ such that $\langle c , e_i \rangle = 0$. Then there does not exist an involutive diffeomorphism of $X$ which acts on $H^2(X , \mathbb{Z})$ as the product of reflections $r_{e_1} r_{e_2} \cdots r_{e_m}$.
\end{proposition}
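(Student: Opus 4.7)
The plan is to reduce directly to Proposition \ref{p:involution}, in the same spirit as Proposition \ref{prop:involminus1} but in the non-spin case. Suppose for contradiction that there exists an involutive diffeomorphism $\phi : X \to X$ whose induced action on $H^2(X,\mathbb{Z})$ is $f := r_{e_1} r_{e_2} \cdots r_{e_m}$. Since each $r_{e_i}$ is an isometry of the intersection form and the orthogonality $\langle e_i, e_j \rangle = 0$ for $i \ne j$ makes the reflections pairwise commute, $f$ is a well-defined involutive isometry. Because $b_2(X) > 0$ and an orientation-reversing diffeomorphism would negate the intersection form under pullback, any $\phi$ realising $f$ must be orientation preserving, as required by Proposition \ref{p:involution}.

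To apply Proposition \ref{p:involution} one must exhibit an $f$-invariant Spin$^c$-structure whose characteristic class satisfies $c(\Gamma)^2 > \sigma(X)$. Since $X$ is simply connected, Spin$^c$-structures correspond bijectively to characteristic elements of $H^2(X,\mathbb{Z})$; the hypothesis furnishes such a characteristic $c$ with $\langle c, e_i \rangle = 0$, and hence $r_{e_i}(c) = c - \langle c, e_i \rangle e_i = c$ for every $i$. Thus $c$ is $f$-fixed, the associated $\Gamma \in \mathcal{S}(X)$ with $c(\Gamma) = c$ is $f$-invariant, and $c^2 > \sigma(X)$ holds by assumption.

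The decisive step is to produce an $f$-invariant maximal positive definite subspace on which $f$ has no non-zero fixed vector. I take $E := \mathrm{span}_{\mathbb{R}}(e_1, \dots, e_m) \subseteq H^2(X, \mathbb{R})$. Mutual orthogonality together with $e_i^2 = 2 > 0$ makes the $e_i$ linearly independent and $E$ positive definite of dimension $m$. Since $X$ is homeomorphic to $\# m \mathbb{CP}^2 \# n \overline{\mathbb{CP}^2}$ we have $b^+(X) = m$, so $E$ is automatically a maximal positive definite subspace. Moreover $r_{e_j}$ fixes each $e_i$ with $i \ne j$ and sends $e_j \mapsto -e_j$, so $f(e_i) = -e_i$ for all $i$; hence $f|_E = -\mathrm{Id}$, which contains no non-zero fixed vector. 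Applying Proposition \ref{p:involution} to $\phi$, $\Gamma$ and $V = E$ gives the required contradiction.

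I expect no genuine obstacle: the argument is essentially a verification of the hypotheses of Proposition \ref{p:involution}. The two points worth highlighting are (a) why $\phi$ must be orientation preserving, which uses that $f$ is an isometry of the non-degenerate intersection form (with $b_2(X) > 0$), and (b) why $E$ is maximal, which relies on the topological identification $b^+(X) = m$ arising from the homeomorphism type of $X$.
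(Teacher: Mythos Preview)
Your proof is correct and follows essentially the same route as the paper: verify that $f = r_{e_1}\cdots r_{e_m}$ fixes the characteristic $c$ (hence the corresponding Spin$^c$-structure), that the span of the $e_i$ is an $f$-invariant maximal positive definite subspace on which $f$ acts as $-\mathrm{Id}$, and then invoke Proposition~\ref{p:involution}. The paper's proof is terser but identical in content; your added remarks on orientation preservation and maximality of $E$ are correct elaborations.
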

\begin{proof}
Let $f = r_{e_1} r_{e_2} \cdots f_{e_m}$. Then $f$ is an involutive isometry and acts as $-Id$ on the maximal positive definite subspace spanned by $e_1 , \dots ,e_m$. Moreover $f(c) = c$ and $c^2 > \sigma$. Thus by Proposition \ref{p:involution} we see that $f$ can not be realised by an involutive diffeomorphism of $X$.
\end{proof}

\begin{example}\label{ex:e1e2}
Many examples of $c , e_1 , \dots , e_m$ satisfying the assumptions of Proposition \ref{prop:reflections} can be constructed. For instance, if $X = \# 2 \mathbb{CP}^2 \# 11 \overline{\mathbb{CP}^2}$ we have $H^2(X , \mathbb{Z}) = 2(1) \oplus 11(-1)$ and we can take
\begin{equation*}
\begin{aligned}
c &= (3, 1 ; 1 , \dots , 1)\\
e_1 &= (0 , 2 ; 1 , 1 , 0 , \dots , 0) \\
e_2 &= (6,1 ; 1 , 1 , 2 , 2 , 2 , 2 , 2 , 2 , 2 , 2 , 1).
\end{aligned}
\end{equation*}
We conclude that for any smooth structure on $X$, there is no differentiable involution inducing $r_{e_1} r_{e_2}$ on $H^2(X , \mathbb{Z})$.
\end{example}

\begin{example}
Another method to construct involutions which can not be realised by diffeomorphisms is as follows: Let $P , Q$ be positive definite unimodular symmetric bilinear forms of positive rank. Suppose that $P$ is even and $Q$ is odd. Then $P \oplus (-Q)$ is an odd indefinite unimodular symmetric bilinear form, so must be isomorphic to $m (1) \oplus n (-1)$, where $m = rk(P)$, $n = rk(Q)$. In particular, $P \oplus (-Q)$ is the intersection form of the topological $4$-manifold $X = \# m \mathbb{CP}^2 \# n \overline{\mathbb{CP}^2}$. Fix an identification $H^2(X , \mathbb{Z}) = P \oplus (-Q)$ and let $f : H^2(X , \mathbb{Z}) \to H^2(X , \mathbb{Z})$ be the involution given by $f(p,q) = (-p , q)$. Then $f$ acts as $-Id$ on a maximal positive definite subspace of $H^2(X , \mathbb{R})$, namely $P \otimes \mathbb{R}$. Suppose that $Q$ has a characteristic $c \in Q$ such that $c^2 < rk(Q) - rk(P)$. Then $\tilde{c} = (0,c)$ is an $f$-invariant characteristic of $H^2(X , \mathbb{Z})$ which satisfies $\tilde{c}^2 > \sigma(X)$. Then by Proposition \ref{p:involution}, $f$ can not be realised by an differentiable involution on $X$, for any smooth structure.
\end{example}

Denote by $E_8$ the unique compact simply connected topological $4$-manifold with intersection form $E_8$ and let $-E_8$ denote the same manifold but with the opposite orientation.

\begin{proposition}\label{p:z2nonrealisation}
Let $X$ be the topological $4$-manifold $X = \# a (S^2 \times S^2) \# 2b(-E_8)$ where $a > 3b$ and $b\ge 1$. Then $H^2(X , \mathbb{Z})$ admits an involution $f : H^2(X , \mathbb{Z}) \to H^2(X , \mathbb{Z})$ with the following properties:
\begin{itemize}
\item[(i)]{$f$ can be realised by the induced action of a continuous, locally linear involution $X \to X$.}
\item[(ii)]{$f$ can be realised by the induced action of a diffeomorphism $X \to X$, where the smooth structure is obtained by viewing $X$ as $\# (a-3b)(S^2 \times S^2) \# b(K3)$.}
\item[(iii)]{$f$ can not be realised by the induced action of an involutive diffeomorphism $X \to X$ for any smooth structure on $X$.}
\end{itemize}
\end{proposition}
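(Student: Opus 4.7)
The plan is to define $f$ explicitly, deduce (iii) directly from Proposition~\ref{prop:involminus1}, realize $f$ diffeomorphically on the K3-based smooth model in (ii) via Wall's stabilization theorem, and construct the locally linear involution in (i) by equivariant connected sum of pieces with compatible involutions. The main work lies in (i).

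I would identify $H^2(X,\mathbb{Z}) \cong aH \oplus 2b(-E_8)$, where $H$ denotes the hyperbolic plane, and group the $2b$ copies of $-E_8$ into $b$ ordered pairs. Define $f$ to act as $-\mathrm{Id}$ on the $aH$ summand and to interchange the two factors in each pair of $-E_8$'s; this is evidently an involutive isometry. Since $H$ and $-E_8$ are even lattices, the intersection form of $X$ is even, so $X$ is topologically spin and $c=0$ is an $f$-invariant integral lift of $w_2(TX)$. One has $\sigma(X)=-16b<0$ and $c^2=0>\sigma(X)$, while the maximal positive definite subspace of $H^2(X,\mathbb{R})$ lies in $aH\otimes\mathbb{R}$, on which $f$ acts as $-\mathrm{Id}$. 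Proposition~\ref{prop:involminus1} (equivalently, Proposition~\ref{p:involution} applied with $\Gamma$ the spin structure) then immediately yields (iii).

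For (ii), I observe that $a-3b \ge 1$, so the smooth model $Y=\#(a-3b)(S^2\times S^2)\#b(K3)$ splits off at least one $S^2\times S^2$ summand. Wall's theorem then guarantees that every orientation-preserving isometry of $H^2(Y,\mathbb{Z})$ is induced by a self-diffeomorphism of $Y$. Fixing any isometric identification $H^2(X,\mathbb{Z})\cong H^2(Y,\mathbb{Z})$, the involution $f$ is thereby realized by a diffeomorphism of $Y$, establishing (ii).

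For (i), the plan is to construct a locally linear topological involution $\tau$ on $X$ by equivariant connected sum. On each $S^2\times S^2$ factor I would use the smooth antipodal-antipodal involution, which is fixed-point-free and acts as $-\mathrm{Id}$ on $H^2$. On each topological pair $(-E_8)\#(-E_8)$ I would use the swap homeomorphism interchanging the two summands and fixing the separating $3$-sphere; by choosing a symmetric collar model of the connected sum, the swap is conjugate to a reflection in each local chart and is hence locally linear at every fixed point. One then assembles these pieces by removing invariant pairs of open $4$-balls and gluing equivariantly; the resulting $\tau$ is a locally linear involution of $X$ inducing $f$ on $H^2$. The main difficulty will be setting up the locally linear swap on $(-E_8)\#(-E_8)$ carefully, which requires a symmetric construction of Freedman's $E_8$-manifold within Freedman--Quinn topology but should follow from standard techniques.
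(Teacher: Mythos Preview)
Your treatment of (ii) and (iii) matches the paper's: Wall's stabilization theorem for (ii) and Proposition~\ref{prop:involminus1} for (iii). (For (ii) you should also note that the complementary summand $N$ is indefinite, since Wall's theorem requires this; here $b\ge 1$ ensures it.)

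However, there is a genuine gap in your plan for (i). The antipodal\,$\times$\,antipodal involution on $S^2\times S^2$ is \emph{fixed-point free}, and this makes equivariant connected sum impossible in the way you describe. To form an equivariant connected sum $M_1\# M_2$ along a single neck you must glue at a fixed point in each summand; if instead you remove an invariant \emph{pair} of balls from each piece and glue in pairs, the result is not $M_1\# M_2$ but rather has nontrivial $\pi_1$ (an extra $S^1\times S^3$ summand appears). Concretely, gluing two copies of $S^2\times S^2$ along a free orbit yields a manifold with $\pi_1\cong\mathbb{Z}$, not $\#2(S^2\times S^2)$. The alternative --- attaching two swapped copies of $S^2\times S^2$ at a free orbit in some other piece --- produces an involution that \emph{exchanges} those summands rather than acting as $-\mathrm{Id}$ on their $H^2$, so the induced action on cohomology is wrong.

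The paper avoids this by using an involution on $S^2\times S^2=\mathbb{CP}^1\times\mathbb{CP}^1$ \emph{with} fixed points: complex conjugation on each factor. This is orientation-preserving on the product, acts as $-\mathrm{Id}$ on $H^2$, and has a $2$-torus of fixed points, so one can take the equivariant connected sum of $a$ copies of $(S^2\times S^2,\iota)$ at fixed points to obtain $\#a(S^2\times S^2)$ with an involution inducing $-\mathrm{Id}$ on $H^2$. The $-E_8$ summands are then attached in swapped pairs at a free orbit of this involution, exactly as you propose. So the fix is simply to replace the antipodal involution by complex conjugation (or any reflection) on each $S^2$ factor; the rest of your outline for (i) then goes through.
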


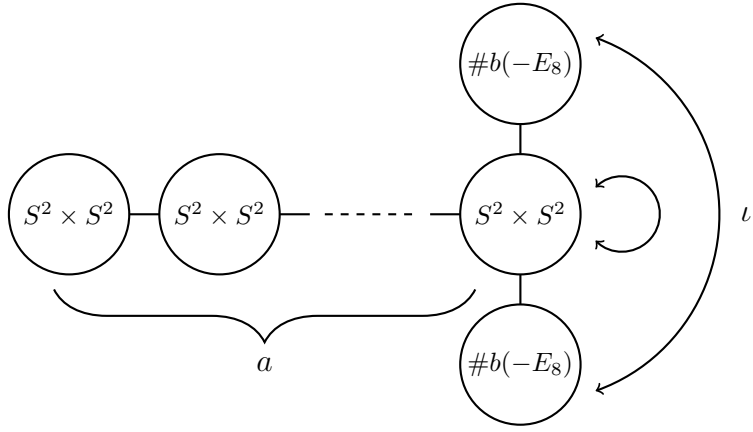
\begin{figure}
\begin{tikzpicture}

\draw[thick] (0,0) circle (0.8);
\node at (0,0) {$S^2 \times S^2$};

\draw[thick] (2,0) circle (0.8);
\node at (2,0) {$S^2 \times S^2$};

\draw[thick] (6,0) circle (0.8);
\node at (6,0) {$S^2 \times S^2$};

\draw[thick] (6,2) circle (0.8);
\node at (6,2) {$\# b(-E_8)$};

\draw[thick] (6,-2) circle (0.8);
\node at (6,-2) {$\#b(-E_8)$};

\draw[thick](0.8,0) -- (1.2,0); 
\draw[thick](2.8,0) -- (3.2,0); 
\draw[thick, dashed](3.4,0) -- (4.6,0); 
\draw[thick](4.8,0) -- (5.2,0); 

\draw[thick](6,0.8) -- (6,1.2); 
\draw[thick](6,-0.8) -- (6,-1.2);

\draw[thick, <->]
(7 , -0.35) arc (-135:135: 0.5 and 0.5);

\draw[thick, <->]
(7 , -2.35) arc (-70:70: 2.5 and 2.5);

\node at (9,0) {{\Large $\iota$}};

\draw [snake=brace,thick,segment amplitude=20pt,segment length=20pt] (5.4 , -1) -- (-0.2, -1);

\node at (2.6 , -2){{\Large $a$}};

\end{tikzpicture}
\caption{The $4$-manifold $X = \# a (S^2 \times S^2) \# 2b(-E_8)$ with $\mathbb{Z}_2$-action.}\label{fig:1}
\end{figure}

\begin{proof}
We explain the construction of the desired $4$-manifold $X$, which is illustrated in Figure \ref{fig:1}. First consider $S^2 \times S^2 = \mathbb{CP}^1 \times \mathbb{CP}^1$ equipped with the involution $\iota : S^2 \times S^2 \to S^2 \times S^2$ which acts as complex conjugation on each $\mathbb{CP}^1$ factor. This action has multiple fixed points and by considering the action of $\iota$ on the tangent space of any fixed point, we see that it is possible to take an equivariant connected sum of $(S^2 \times S^2 , \iota)$ with itself any number of times to obtain an involution $\iota$ on $\# a (S^2 \times S^2)$ which acts as $-Id$ on $H^2( \# a(S^2 \times S^2) , \mathbb{Z} )$. Choose a point $x \in \# a (S^2 \times S^2)$ which is not fixed by $\iota$ and attach copies of $-E_8$ at $x$ and $\iota(x)$. Then we can extend $\iota$ to an involution on $\# a(S^2 \times S^2) \# 2(-E_8)$ which swaps the two copies of $-E_8$. Repeating this $b$ times we obtain a continuous involution $\iota$ on $X = \# a (S^2 \times S^2) \# 2b(-E_8)$. Let $f = \iota_* : H^2(X , \mathbb{Z}) \to H^2(X , \mathbb{Z})$ be the induced involution on $H^2(X , \mathbb{Z})$. Then (i) holds by our construction of $f$. The connected sum decomposition of $X$ gives an identification $H^2(X , \mathbb{Z}) = a H \oplus 2b (-E_8)$, where we define $H$ to be the intersection form of $S^2 \times S^2$. By construction of $\iota$, $f$ acts as $-Id$ on $a H$ and swaps pairs of copies of $-E_8$. If $a > 3b$ then $X$ admits at least one smooth structure since we can view $X$ as $\# (a-3b)(S^2 \times S^2) \# b(K3)$. Moreover if $a > 3b$ and $b \ge 1$, then with respect to this smooth structure $X$ is of the form $X = (S^2 \times S^2) \# N$, where $N$ is a smooth simply-connected compact $4$-manifold with indefinite intersection form. By a theorem of Wall \cite[Theorem 2]{wall}, every isometry of $H^2(X , \mathbb{Z})$ is realised by a diffeomorphism. In particular $f$ is realised by some diffeomorphism of $X$, which proves (ii). Part (iii) follows from our obstruction theorem for involutive diffeomorphisms, or more specifically, Proposition \ref{prop:involminus1}.
\end{proof}

\subsection{$\mathbb{Z}_{2k}$-actions}

Let $k \ge 2$ be an integer. We consider $\mathbb{Z}_{2k}$-actions on $4$-manifolds.

\begin{proposition}\label{p:z2knonrealisation}
Let $X$ be the topological $4$-manifold $X = \# a (S^2 \times S^2) \# 2kb(-E_8)$ where $a > 3kb$, $b\ge 1$ and suppose that $a$ and $k$ are odd. Then $H^2(X , \mathbb{Z}) = a H \oplus 2kb(-E_8)$. Let $f : H^2(X , \mathbb{Z}) \to H^2(X , \mathbb{Z})$ be an isometry of order $2k$ which acts as $\left( \begin{matrix} 0 & -1 \\ -1 & 0 \end{matrix} \right)$ on each $H$ summand and acts as a permutation of the $-E_8$ summands such that each cycle of the permutation has length $2k$. Then $f$ has the following properties:
\begin{itemize}
\item[(i)]{$f$ can be realised by a continuous, locally linear action of $\mathbb{Z}_{2k}$ on $X$.}
\item[(ii)]{$f$ can be realised by the induced action of a diffeomorphism $X \to X$, where the smooth structure is obtained by viewing $X$ as $\# (a-3kb)(S^2 \times S^2) \# bk(K3)$.}
\item[(iii)]{$f$ can not be realised by a smooth $\mathbb{Z}_{2k}$-action for any smooth structure on $X$.}
\end{itemize}
\end{proposition}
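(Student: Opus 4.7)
The strategy mirrors Proposition~\ref{p:z2nonrealisation}: construct the continuous $\mathbb{Z}_{2k}$-action in~(i), apply Wall's theorem in~(ii), and invoke Proposition~\ref{p:cyclic} in~(iii).

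For part~(i), I would exploit the splitting $\mathbb{Z}_{2k}\cong\mathbb{Z}_2\times\mathbb{Z}_k$ available since $k$ is odd. The $\mathbb{Z}_2$-factor is an involution $\iota$ on $\#a(S^2\times S^2)$ built by iterated equivariant connected sum from the involution $(x,y)\mapsto(\bar y,\bar x)$ on $\mathbb{CP}^1\times\mathbb{CP}^1$; this map is orientation preserving, has a circle of fixed points available for equivariant connected sum, and induces $\begin{pmatrix}0&-1\\-1&0\end{pmatrix}$ on each hyperbolic summand. Following the pattern of Proposition~\ref{p:z2nonrealisation}, one then equivariantly attaches the $-E_8$ summands in $b$ orbits of size $2k$, using the $\mathbb{Z}_k$-factor to cyclically rotate the $k$ summands attached at each endpoint of a two-point $\iota$-orbit (a locally linear action commuting with $\iota$). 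The resulting faithful $\mathbb{Z}_{2k}$-action induces $f$ on $H^2(X,\mathbb{Z})$.

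For part~(ii), a direct intersection-form computation gives that $bk$ copies of $K3$ have intersection form $2bk(-E_8)\oplus 3bk\cdot H$, so adding $(a-3bk)H$ reproduces $aH\oplus 2kb(-E_8)$ and confirms the stated homeomorphism. Since $a>3kb$ this smooth $X$ splits as $(S^2\times S^2)\#N$ for some simply connected indefinite $N$, and Wall's theorem realises every isometry of $H^2(X,\mathbb{Z})$ by a diffeomorphism.

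For part~(iii), suppose some smooth $\mathbb{Z}_{2k}$-action on $X$ induces $f$; its generator $g$ is a diffeomorphism of order exactly $2k$. I would apply Proposition~\ref{p:cyclic} with its cyclic order parameter $k$ replaced by $2k$ (so that $m=k$). The hypotheses are readily verified: $b_1(X)=0$, $b^+(X)=a$ is odd, and $X$ is spin so the trivial Spin$^c$-structure $\Gamma$ has $c(\Gamma)^2=0>\sigma(X)=-16kb$; moreover $k$ odd gives $2k\equiv 2\pmod 4$, putting us in the second clause of the proposition. Finally, since $2kb(-E_8)$ is negative definite any maximal positive definite $f$-invariant $V\subseteq H^2(X,\mathbb{R})$ lies in $aH\otimes\mathbb{R}$; on each $H$ the matrix $\begin{pmatrix}0&-1\\-1&0\end{pmatrix}$ has $(1,1)$ (square $+2$) as a $-1$-eigenvector and $(1,-1)$ (square $-2$) as a $+1$-eigenvector, so $V$ is the span of the $(1,1)$-vectors and $f|_V=-\mathrm{Id}$. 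Thus $V\cong\mathbb{R}_-^{a}=\mathbb{R}_-^{2((a-1)/2)+1}$ as a $\mathbb{Z}_{2k}$-representation, with no $\mathbb{C}_{d_i}$-summands; this is exactly the forbidden form in the second clause of Proposition~\ref{p:cyclic}, yielding the contradiction. The only delicate step is the equivariant connected sum construction in~(i); once it is in place, parts~(ii) and~(iii) follow cleanly from Wall's theorem and Proposition~\ref{p:cyclic} respectively.
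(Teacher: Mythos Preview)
Your arguments for (ii) and (iii) are correct and match the paper's closely. The minor slip in (iii)---not every $f$-invariant maximal positive definite $V$ need lie inside $aH\otimes\mathbb{R}$---is harmless, since Proposition~\ref{p:cyclic} only requires you to exhibit \emph{one} such $V$ of the forbidden type, and the span of the $(1,1)$-vectors does the job.

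The genuine gap is in (i). Your involution $(x,y)\mapsto(\bar y,\bar x)$ does induce $\left(\begin{smallmatrix}0&-1\\-1&0\end{smallmatrix}\right)$ on $H$, though its fixed set is the anti-diagonal $2$-sphere $\{(x,\bar x)\}$, not a circle. The real problem is the $\mathbb{Z}_k$-factor: you have not defined a $\mathbb{Z}_k$-action on $\#a(S^2\times S^2)$, and one cannot simply let $\mathbb{Z}_k$ ``act only on the attached $-E_8$ summands''. Any locally linear $\mathbb{Z}_k$-action that cyclically permutes $k$ copies of $-E_8$ attached near a point $x$ must act nontrivially on the attaching $S^3$ (as a rotation, not the identity), hence nontrivially on a neighbourhood of $x$ in $\#a(S^2\times S^2)$. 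So you need a global $\mathbb{Z}_k$-action on $\#a(S^2\times S^2)$ commuting with $\iota$, acting trivially on cohomology, and compatible with all the equivariant connected sums at fixed points. Constructing this action and matching tangent representations is exactly the work you have omitted, and it is not automatic.

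The paper handles (i) by a different route: it builds the full $\mathbb{Z}_{2k}$-action directly on each copy of $S^2\times S^2$, using two explicit order-$2k$ diffeomorphisms $\iota_1(x,y)=(r(y),s(x))$ (with $r,s$ reflections of $S^2$ generating a dihedral group of order $2k$) and $\iota_2(x,y)=(\varphi\alpha y,\varphi\alpha x)$ (with $\varphi$ a rotation of order $2k$ and $\alpha$ the antipodal map). Both induce $\left(\begin{smallmatrix}0&-1\\-1&0\end{smallmatrix}\right)$ on $H$ and have two isolated fixed points; a computation shows their tangent representations are $\mathbb{C}_1\oplus\mathbb{C}_{k-1}$ and $\mathbb{C}_1\oplus\mathbb{C}_{k+1}$, which agree via an orientation-reversing isomorphism. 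The equivariant connected sum $\#a(S^2\times S^2)$ is then formed by alternating between the two models, and the $-E_8$ summands are attached along free $\mathbb{Z}_{2k}$-orbits.
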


\begin{proof}
We will construct the desired $4$-manifold $X$, which is illustrated in Figure \ref{fig:2}. First of all we consider two different $\mathbb{Z}_{2k}$-actions on $S^2 \times S^2$. Then we take an equivariant connected sum $\# a(S^2 \times S^2)$, where the $\mathbb{Z}_{2k}$-action on each $S^2 \times S^2$ factor alternates between the two types. Finally we attach to this $2kb$ copies of $-E_8$.\\

We now describe the two different $\mathbb{Z}_{2k}$-actions on $S^2 \times S^2$. We will denote the generators of these two actions by $\iota_1$ and $\iota_2$.\\

{\em Construction of $\iota_1$}: let $r,s : \mathbb{R}^2 \to \mathbb{R}^2$ be a pair of reflections generating the dihedral group of order $2k$. So $r^2 = s^2 = 1$ and $(sr)^k = 1$. Let $r,s$ act on the unit sphere $S^2 \subset \mathbb{R}^3$ by $r(x,y,z) = ( r(x,y) , z )$, $s(x,y,z) = (s(x,y) , z)$. The two fixed points of this action on $S^2$ are $u = (0,0,1)$ and $v = (0,0,-1)$. Now define the diffeomorphism $\iota_1 : S^2 \times S^2 \to S^2 \times S^2$ to be given by $\iota_1(x,y) = (r(y) , s(x))$. Clearly $\iota_1$ generates a $\mathbb{Z}_{2k}$-action on $S^2 \times S^2$. We see that $\iota_1$ has precisely two fixed points, which are $(u,u)$ and $(v,v)$. We obtain induced actions of $\mathbb{Z}_{2k}$ on the tangent spaces $T_{(u,u)} (S^2 \times S^2)$ and $T_{(v,v)} (S^2 \times S^2)$ of the fixed points. By a direct calculation one finds, as oriented representations of $\mathbb{Z}_{2k}$, that:
\begin{equation}\label{equ:rep1}
T_{(u,u)} (S^2 \times S^2) \cong T_{(v,v)} (S^2 \times S^2) \cong \mathbb{C}_1 \oplus \mathbb{C}_{-1+k}.
\end{equation}
We note that the action of $\iota_1$ on $H = H^2(S^2 \times S^2 , \mathbb{Z})$ is of the form $\left[ \begin{matrix} 0 & -1 \\ -1 & 0 \end{matrix} \right]$.\\

{\em Construction of $\iota_2$}: let $\varphi : \mathbb{R}^2 \to \mathbb{R}^2$ be a rotation of order $2k$. We let $\varphi$ act on the unit sphere $S^2 \subset \mathbb{R}^3$ by $\varphi(x,y,z) = ( \varphi(x,y) , z)$. Let $\alpha : S^2 \times S^2$ be the antipodal map and note that $\alpha$ commutes with $\varphi$. We let $\iota_2 : S^2 \times S^2$ be given by $\iota_2(x,y) = (\varphi \alpha y , \varphi \alpha x)$. Then $\iota_2$ has two fixed points, which are $(u,v)$ and $(v,u)$. A short calculation shows that, as oriented representations of $\mathbb{Z}_{2k}$, we have:
\begin{equation}\label{equ:rep2}
T_{(u,v)} (S^2 \times S^2) \cong T_{(v,u)} (S^2 \times S^2) \cong \mathbb{C}_1 \oplus \mathbb{C}_{1+k}.
\end{equation}

\begin{figure}
\begin{tikzpicture}

\draw[thick] (0,0) circle (0.8);
\node at (0,0) {$S^2 \times S^2$};

\draw[thick] (2,0) circle (0.8);
\node at (2,0) {$S^2 \times S^2$};

\draw[thick] (6,0) circle (0.8);
\node at (6,0) {$S^2 \times S^2$};

\draw[thick] (7.2,1.6) circle (0.8);
\node at (7.2,1.6) {$\# b(-E_8)$};

\draw[thick] (7.2,-1.6) circle (0.8);
\node at (7.2,-1.6) {$\#b(-E_8)$};

\draw[thick](0.8,0) -- (1.2,0); 
\draw[thick](2.8,0) -- (3.2,0); 
\draw[thick, dashed](3.4,0) -- (4.6,0); 
\draw[thick](4.8,0) -- (5.2,0); 

\draw[thick](6.48,0.64) -- (7.2-0.48,1.6-0.64);
\draw[thick](6.48,-0.64) -- (7.2-0.48,-1.6+0.64);

\draw [fill] (7.4,0.4) circle(0.04);
\draw [fill] (7.4,0.0) circle(0.04);
\draw [fill] (7.4,-0.4) circle(0.04);

\draw[thick, ->]
(9.5, -0.45) arc (-170:170: 0.5 and 2);

\node at (11,0) {{\Large $\iota$}};

\draw [snake=brace,thick,segment amplitude=10pt,segment length=10pt] (8.3 , 2) -- (8.3, -2);

\node at (9,0){{\Large $2k$}};

\draw [snake=brace,thick,segment amplitude=20pt,segment length=20pt] (5.4 , -1) -- (-0.2, -1);

\node at (2.6 , -2){{\Large $a$}};

\node at (0,1.5){{\Large $(u,u)$}};
\node at (1.4,1.5){{\Large $(u,v)$}};
\node at (2.8,1.5){{\Large $(v,u)$}};

\draw[thin, ->](0.2 , 1.2) -- (0.7,0.1);
\draw[thin, ->](1.4 , 1.2) -- (1.3,0.1);
\draw[thin, ->](2.8 , 1.2) -- (2.7,0.1);

\node at (4,1.5){{\Large $\cdots$}};

\end{tikzpicture}
\caption{The $4$-manifold $X = \# a (S^2 \times S^2) \# 2kb(-E_8)$ with $\mathbb{Z}_{2k}$-action.}\label{fig:2}
\end{figure}
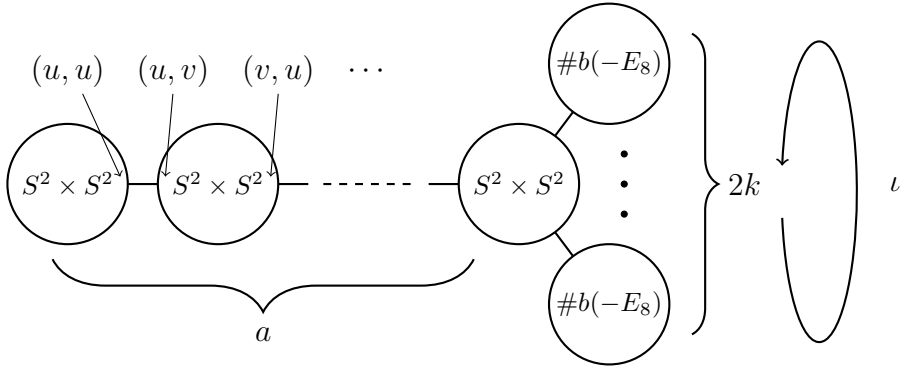

From (\ref{equ:rep1}) and (\ref{equ:rep2}), we see that the $\mathbb{Z}_{2k}$-actions of $\iota_1 , \iota_2$ on the tangent spaces of their stabilisers are isomorphic by an orientation reversing isomorphism. We note that the action of $\iota_2$ on $H = H^2(S^2 \times S^2 , \mathbb{Z})$ is of the form $\left[ \begin{matrix} 0 & -1 \\ -1 & 0 \end{matrix} \right]$.\\

We form an equivariant connected sum $( \# a (S^2 \times S^2) , \iota )$ as follows. Let $(S^2 \times S^2)_1 , \dots , (S^2 \times S^2)_a$ be the $a$ copies of $S^2 \times S^2$. We let $\iota_1$ act on $(S^2 \times S^2)_i$ for odd $i$ and let $\iota_2$ act on $(S^2 \times S^2)_i$ for even $i$. We identify a $3$-sphere around $(u,u)$ in $(S^2 \times S^2)_1$ with a $3$-sphere around $(u,v)$ in $(S^2 \times S^2)_2$ via an orientation reversing diffeomorphism which intertwines the representations given by (\ref{equ:rep1}) and (\ref{equ:rep2}). In a similar manner, we identify a $3$-sphere around $(v,u)$ in $(S^2 \times S^2)_2$ with a $3$-sphere around $(v,v)$ in $(S^2 \times S^2)_3$. Continuing in this manner, we obtain the connected sum $\# a(S^2 \times S^2)$ equipped with a $\mathbb{Z}_{2k}$-action. Let $\iota : \# a(S^2 \times S^2) \to \# a(S^2 \times S^2)$ denote the generator of this action.\\

Next, choose a point $x \in \# a (S^2 \times S^2)$ such that the orbit of $x$ under the $\mathbb{Z}_{2k}$-action has size $2k$ (there are infinitely many such orbits). Attach a copy of $-E_8$ at each point of the orbit of $x$. Then we can extend $\iota$ to a $\mathbb{Z}_{2k}$-action on $\# a(S^2 \times S^2) \# 2k(-E_8)$ which cyclically permutes the $2k$ copies of $-E_8$. Repeating this $b$ times we obtain a continuous $\mathbb{Z}_{2k}$-action $\iota$ on $X = \# a (S^2 \times S^2) \# 2kb(-E_8)$. Let $f = \iota_* : H^2(X , \mathbb{Z}) \to H^2(X , \mathbb{Z})$ be the induced action on $H^2(X , \mathbb{Z})$, then (i) holds by construction. The connected sum decomposition of $X$ gives an isomorphism $H^2(X , \mathbb{Z}) = a H \oplus 2kb (-E_8)$. In each copy of $H$, consider the diagonal subspace $H_{\Delta} \subset H$ corresponding to the diagonal copy of $S^2$ in $S^2 \times S^2$. These span a maximal positive definite subspace $H^+ \subset H^2(X , \mathbb{R})$ and it is easily seen that $\iota$ acts as $-Id$ on $H^+$. If $a > 3kb$ then $X$ admits at least one smooth structure since we can view $X$ as $\# (a-3kb)(S^2 \times S^2) \# bk(K3)$. If $a > 3kb$ and $b \ge 1$, then as in the proof of Proposition \ref{p:z2nonrealisation} we can use Wall's theorem \cite[Theorem 2]{wall}, to see that $f$ is realised by some diffeomorphism of $X$, proving (ii). Part (iii) follows from Proposition \ref{p:cyclic}, which applies here since we assume $b^+(X) =a$ and  $k$ are odd and we can take the Spin$^c$-structure $\Gamma$ with $c(\Gamma) = 0$ so that $\Gamma$ is preserved by $\iota$ and $0 = c^2(\Gamma) > \sigma(X) = -16kb$.
\end{proof}

\begin{proposition}
Let $X$ be the topological $4$-manifold $X = \# 3 \mathbb{CP}^2 \# 12 \overline{\mathbb{CP}^2}$. Then there exists an isometry $f : H^2(X , \mathbb{Z}) \to H^2(X , \mathbb{Z})$ of order $4$ which has the following properties: 
\begin{itemize}
\item[(i)]{$f$ acts on an invariant maximal positive definite subspace as the representation $\mathbb{R}_{-} \oplus \mathbb{C}_1$.}
\item[(ii)]{$f$ can be realised by the induced action of a diffeomorphism $X \to X$, where the smooth structure is obtained by viewing $X$ as $\# 3 \mathbb{CP}^2 \# 12 \overline{\mathbb{CP}^2}$.}
\item[(iii)]{$f$ can not be realised by a smooth $\mathbb{Z}_4$-action for any smooth structure on $X$.}
\end{itemize}
\end{proposition}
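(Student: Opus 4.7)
My strategy mirrors that of Propositions~\ref{p:z2nonrealisation} and~\ref{p:z2knonrealisation}. I would first construct an explicit order-$4$ isometry $f$ of $H^2(X,\mathbb{Z})$ together with an $f$-invariant Spin$^c$-structure whose characteristic has the right square; then I would realise $f$ by a self-diffeomorphism via Wall's theorem; and finally I would rule out smooth $\mathbb{Z}_4$-realisations by invoking Proposition~\ref{p:cyclic}.

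The construction of $f$ is the core step. Since $H^2(X,\mathbb{Z}) \cong I_{3,12}$ is an indefinite odd unimodular lattice, Milnor's classification allows one to write it orthogonally as $L_0 \oplus L_1$, where $L_0 = U \oplus U$ is the rank-$4$ even unimodular lattice of signature $(2,2)$ and $L_1 = I_{1,10}$. I would take $g_0 : L_0 \to L_0$ to be the order-$4$ isometry $(u,v) \mapsto (v, -u)$; a direct computation shows $g_0^2 = -\mathrm{id}$ and that $g_0$ acts as $\mathbb{C}_1$ on both the maximal positive and maximal negative definite $2$-planes of $L_0$. On $L_1$ I would pick a characteristic vector $c_1$ of square $-1$, for example $c_1 = 3e + f_1 + \cdots + f_{10}$ in a standard basis for $I_{1,10}$, and set $g_1(x) = -x - 2(x \cdot c_1) c_1$; this is an integer-valued involution fixing $c_1$ and acting as $-\mathrm{id}$ on $c_1^\perp$. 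With $f = g_0 \oplus g_1$ one checks that $f$ has order $4$, and $f|_{V^+} \cong \mathbb{R}_- \oplus \mathbb{C}_1$ on the natural $f$-invariant maximal positive definite subspace $V^+ = V^+(L_0) \oplus V^+(L_1)$, giving~(i).

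For~(ii), I would use the classical diffeomorphism $\mathbb{CP}^2 \# 2\overline{\mathbb{CP}^2} \cong (S^2 \times S^2) \# \overline{\mathbb{CP}^2}$ to rewrite $X$ smoothly as $(S^2 \times S^2) \# 2\mathbb{CP}^2 \# 11 \overline{\mathbb{CP}^2}$. Since this contains an $S^2 \times S^2$ summand, Wall's theorem \cite[Theorem 2]{wall} realises every isometry of $H^2(X,\mathbb{Z})$, and in particular $f$, by a self-diffeomorphism of $X$, yielding~(ii).

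For~(iii), I would apply Proposition~\ref{p:cyclic} with $k=4$, $m=2$ and $u=1$; the forbidden representation there is exactly $\mathbb{R}_- \oplus \mathbb{C}_1$. To verify the hypotheses I need an $f$-invariant Spin$^c$-structure $\Gamma$ on $X$ with $c(\Gamma)^2 > \sigma(X) = -9$. Because $L_0$ is even, a characteristic of $L_0 \oplus L_1$ has $L_0$-component in $2L_0$ and $L_1$-component characteristic in $L_1$; hence $c = c_1 \in L_1 \subset L$ is itself a characteristic of $L$, with $c^2 = -1 > -9$. It is $f$-invariant because $g_0$ has no nonzero fixed vector and $g_1(c_1) = c_1$. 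Therefore, if $f$ were induced by a smooth $\mathbb{Z}_4$-action on some smooth structure on $X$, Proposition~\ref{p:cyclic} would contradict~(i).

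The main obstacle is the simultaneous satisfaction of two competing constraints in constructing $f$: the representation $f|_{V^+}$ must be the forbidden $\mathbb{R}_- \oplus \mathbb{C}_1$, while there must also exist an $f$-invariant characteristic of square greater than $\sigma(X)$. The first condition forces the $(+1)$-eigenspace of $f$ to be negative definite, so any invariant characteristic lies in this eigenspace and has $c^2 \leq 0$, hence $c^2 = -1$ by the mod $8$ constraint. Localising the $\mathbb{C}_1$-action on the even summand $U \oplus U$ is the key trick: all characteristic data then concentrates on the $I_{1,10}$-summand, where the involution $g_1$ can be chosen to fix a specific $c_1$ of square $-1$.
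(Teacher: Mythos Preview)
Your argument is correct and follows the same overall architecture as the paper's proof: construct an explicit order-$4$ isometry $f$ with the forbidden action on $V^+$, produce an $f$-invariant characteristic $c$ with $c^2>-9=\sigma(X)$, invoke Proposition~\ref{p:cyclic} for (iii), and invoke Wall's theorem for (ii). The difference lies entirely in the construction of $f$ and $c$.

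The paper works directly in the diagonal basis $3(1)\oplus 12(-1)$: it writes down three explicit vectors $x,y,z$ of square $2$ with $\langle x,y\rangle=2$ and $\langle x,z\rangle=\langle y,z\rangle=0$, and a characteristic $c=(3,1,1;1,\dots,1)$ orthogonal to all three, then sets $f=r_x r_y r_z$. The product $r_x r_y$ gives the $\mathbb{C}_1$ rotation on $\mathrm{span}(x,y)$ and $r_z$ supplies the $\mathbb{R}_-$ factor. Your approach instead passes through the lattice splitting $I_{3,12}\cong U\oplus U\oplus I_{1,10}$, putting the $\mathbb{C}_1$ part on the even summand $U\oplus U$ via $(u,v)\mapsto(v,-u)$ and the $\mathbb{R}_-$ part on $I_{1,10}$ via $-r_{c_1}$ for a characteristic $c_1$ of square $-1$. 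This is more structural: it makes transparent \emph{why} an invariant characteristic of square $-1$ exists (the $\mathbb{C}_1$ piece lives on an even sublattice, so the characteristic can sit entirely in the odd complement where an involution can be arranged to fix it), whereas the paper's coordinates simply exhibit one. Conversely, the paper's explicit vectors avoid having to justify the orthogonal splitting $I_{3,12}\cong U^2\oplus I_{1,10}$ and let the reader verify every inner product by hand. Both routes lead to the same invocation of Proposition~\ref{p:cyclic} with $k=4$, $u=1$, and neither needs the mod~$16$ condition since that proposition relies on case~(2) of Theorem~\ref{t:hplusobstruction}.
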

\begin{proof}
We have $H^2( X , \mathbb{Z}) \cong 3(1) \oplus 12(-1)$. Consider the following elements of $H^2(X , \mathbb{Z})$:
\begin{equation*}
\begin{aligned}
c &= (3 , 1 , 1 ; 1 ,  \dots , 1), \\
x &= (0,2,0 ; 1,1,0, \dots , 0),\\
y &= (3,2,1 ; 1 ,1 , \dots , 1 ), \\
z &= ( 0, 0 , 2 ;  0 , 0 , 1 , 1, 0 , \dots , 0 ).
\end{aligned}
\end{equation*}
Then $c$ is a characteristic, $c^2 = -1 > \sigma(X) = -9$ and $c^2 - \sigma(X) = 8 \; ({\rm mod} \; 16)$. We also find $x^2 = y^2 = z^2 = 2$, $\langle x , y \rangle = 2$, $\langle x , z \rangle = \langle y , z \rangle = \langle x , c \rangle = \langle y , c \rangle = \langle z , c \rangle = 0$. Now let $r_x , r_y , r_z$ be the reflections in the hyperplanes orthogonal to $x,y,z$, so $r_x,r_y,r_z$ are isometries of $H^2(X , \mathbb{Z})$. Let $f = r_x r_y r_z$. Then $f$ preserves the characteristic $c$ and preserves the maximal positive definite subspace $H^+$ given by the span of $x,y,z$. One easily checks that $f |_{H^+}$ has the form $\mathbb{R}_{-} \oplus \mathbb{C}_1$, proving (i). From Proposition \ref{p:cyclic}, we see that $f$ can not be realised by a diffeomorphism of order $4$ for any smooth structure on $X$, proving (iii). For (ii), recall that $\mathbb{CP}^2 \# 2 \overline{\mathbb{CP}^2}$ is diffeomorphic to $(S^2 \times S^2) \# \overline{\mathbb{CP}^2}$ and thus $X$, viewed as the smooth $4$-manifold $\# 3 \mathbb{CP}^2 \# 12 \overline{\mathbb{CP}^2}$ with its standard smooth structure is of the form $(S^2 \times S^2) \# N$ for a smooth compact simply-connected indefinite $4$-manifold $N$ (namely $N = \# 2 \mathbb{CP}^2 \# 11 \overline{\mathbb{CP}^2}$). Thus Wall's theorem \cite[Theorem 2]{wall} applies and we can realise $f$ by a diffeomorphism, proving (ii).
\end{proof}

\subsection{$\mathbb{Z}^d$-actions}

\begin{proposition}
Let $X$ be the topological $4$-manifold $X = \# 2 \mathbb{CP}^2 \# 11 \overline{\mathbb{CP}^2}$. There exists a commuting pair of isometries $f_1 , f_2 : H^2(X , \mathbb{Z}) \to H^2(X , \mathbb{Z})$ such that:
\begin{itemize}
\item[(i)]{$f_1$ and $f_2$ can be realised as diffeomorphisms of $X$ with respect to its standard smooth structure.}
\item[(ii)]{For any smooth structure on $X$, any diffeomorphisms realising $f_1$ and $f_2$ do not commute.}
\end{itemize}
\end{proposition}
\begin{proof}
Let $c,e_1,e_2$ be as in Example \ref{ex:e1e2}. Let $f_1 , f_2$ be the reflections $f_1 = r_{e_1}$, $f_2 = r_{e_2}$. Then $f_1 , f_2$ act on the maximal positive definite subspace $V = span(e_1 , e_2)$ as $diag(-1 , 1)$, $diag(1, -1)$, or $\left( \begin{matrix} \epsilon^1_1 & \epsilon^2_1 \\ \epsilon^1_2 & \epsilon^2_2 \end{matrix} \right) = \left( \begin{matrix} 1 & 0 \\ 0 & 1 \end{matrix} \right)$. It follows from Proposition \ref{p:commutingdiffeos} that $f_1 , f_2$ can't be realised by commuting diffeomorphisms for any smooth structure on $X$, proving (ii). On the other hand, $f_1$ and $f_2$ can each be realised by diffeomorphisms with respect to the standard smooth structure on $X$ by Wall's theorem \cite[Theorem 2]{wall}, proving (i).
\end{proof}

\begin{remark}
A similar type of non-smoothability result for $\mathbb{Z}^2$-actions was obtained for the connected sum of an Enriques surface with $S^2 \times S^2$ by Nakamura \cite{na3}.
\end{remark}

\subsection{$\mathbb{Z}_2 \times \mathbb{Z}_2$-actions}

\begin{proposition}
Let $X$ be the topological $4$-manifold $X = \# 2(a+b) \mathbb{CP}^2 \# (2a+2b+32c+1) \overline{\mathbb{CP}^2}$, where $a,b,c$ are positive integers such that $a,b \ge 3c$. There exists a commuting pair of involutive isometries $\phi_1 , \phi_2 : H^2(X , \mathbb{Z}) \to H^2(X , \mathbb{Z})$ with the following properties:
\begin{itemize}
\item[(i)]{$\phi_1,\phi_2$, can be realised by a continuous, locally linear $\mathbb{Z}_2 \times \mathbb{Z}_2$-action on $X$.}
\item[(ii)]{Viewing $X$ as the smooth $4$-manifold $X = \# (2a+2b-6c) S^2 \times S^2 \# 2c (K3) \# \overline{\mathbb{CP}^2}$, we have that $\phi_1$ and $\phi_2$ can be realised as smooth involutions on $X$.}
\item[(iii)]{For any smooth structure on $X$, we have that $\phi_1$ and $\phi_2$ can not be realised as commuting smooth involutions.}
\end{itemize}
\end{proposition}
\begin{proof}
We first construct $X$ as a topological $4$-manifold with a continuous $\mathbb{Z}_2 \times \mathbb{Z}_2$-action, as illustrated in Figure \ref{fig:3}. We start with $\overline{\mathbb{CP}^2}$ equipped with the following homologically trivial $\mathbb{Z}_2 \times \mathbb{Z}_2$-action:
\[
f_1( [x,y,z]) = [x , -y , z], \quad f_2( [x,y,z] ) = [x,y,-z].
\]
The point $[0,1,0]$ has orbit $\{ [0,1,0] , [0,-1,0] \}$ and stabiliser $\mathbb{Z}_2 = \langle f_2 \rangle$. The action of $f_2$ on the tangent space at $[0,1,0]$ has the form ${\rm diag}(1,1,-,1,-1)$. Similarly, the point $[0,0,1]$ has orbit $\{ [0,0,1] , [0,0,-1] \}$ and stabiliser $\mathbb{Z}_2 = \langle f_1 \rangle$. The action of $f_1$ on the tangent space at $[0,0,1]$ has the form ${\rm diag}(1,1,-1,-1)$. Now consider $S^2 \times S^2$ equipped with the $\mathbb{Z}_2$-action $\iota(x,x) = (r(x) , r(x))$, where $r : S^2 \to S^2$ is a reflection. Clearly we can form a $\mathbb{Z}_2$-equivariant connected sum $\# a(S^2 \times S^2)$ by attaching one copy of $S^2 \times S^2$ to the next one at fixed points. Next, we connect sum $\# a(S^2 \times S^2)$ to $\overline{\mathbb{CP}^2}$ at $[0,0,1]$ in a $\mathbb{Z}_2 = \langle f_1 \rangle$-equivariant fashion and similarly attach $\# a(S^2 \times S^2)$ to $[0,0,-1]$ $f_1$-equivariantly. Since $f_2$ exchanges the points $[0,0,1]$ and $[0,0,-1]$, it is clear that we now obtain a $\mathbb{Z}_2 \times \mathbb{Z}_2$-action on $\# 2a(S^2 \times S^2) \# \overline{\mathbb{CP}^2}$, where $f_2$ exchanges the two copies of $\# a(S^2 \times S^2)$. Similarly, we connect sum $\# b(S^2 \times S^2)$ to the point $[0,1,0]$ in a $\mathbb{Z}_2 = \langle f_1 \rangle$-equivariant manner and also attach $\# b(S^2 \times S^2)$ to $[0,-1,0]$ $f_1$-equivariantly. In this way, we have obtained a $\mathbb{Z}_2 \times \mathbb{Z}_2$-action on $X' = \#(2a+2b)S^2 \times S^2 \# \overline{\mathbb{CP}^2}$, where $f_2$ exchanges the two copies of $\#a(S^2 \times S^2)$ and $f_1$ exchanges the two copies of $\# b(S^2 \times S^2)$.\\

Let $x$ be any point in $X'$ in which the orbit has size $4$ and attach a copy of $-E_8$ to each of these four points. Repeating this $c$ times, we get a continuous $\mathbb{Z}_2 \times \mathbb{Z}_2$-action on $X = X' \# 4c(-E_8)$. If $a,b \ge 3c$ then $X$ is clearly smoothable, so from Freedman's classification of compact simply-connected topological $4$-manifolds, we have $X = \# 2(a+b) \mathbb{CP}^2 \# (2a+2b+32c+1) \overline{\mathbb{CP}^2}$. Let $\phi_1 , \phi_2 : H^2(X  \mathbb{Z}) \to H^2(X , \mathbb{Z})$ be the isometries of $H^2(X , \mathbb{Z})$ induced by the involutions $f_1 , f_2 : X \to X$. Then (i) holds by construction. Let $c = 3 \in \mathbb{Z} = H^2( \overline{\mathbb{CP}^2} , \mathbb{Z})$. Then we can regard $c$ as an element of $H^2( X , \mathbb{Z})$ and we have that $c$ is a characteristic and that $c^2 = -9$. It follows that $c^2 - \sigma(X) = 8 \; ({\rm mod} \; 16)$ and that $c^2 - \sigma(X) > 0$ as long as $c \ge 1$.\\

\begin{figure}

\begin{tikzpicture}

\draw[thick] (0,0) circle (1);
\node at (0,0) {$\#^a S^2 \times S$};

\draw[thick] (2.5,0) circle (1);
\node at (2.5,0) {$\overline{\mathbb{CP}^2}$};

\draw[thick] (5,0) circle (1);
\node at (5,0) {$\#^a S^2 \times S^2$};

\draw[thick] (0,2.5) circle (1);
\node at (0,2.5) {$\#^c (-E_8)$};

\draw[thick] (2.5,2.5) circle (1);
\node at (2.5,2.5) {$\#^b S^2 \times S^2$};

\draw[thick] (5,2.5) circle (1);
\node at (5,2.5) {$\#^c (-E_8)$};

\draw[thick] (0,-2.5) circle (1);
\node at (0,-2.5) {$\#^c (-E_8)$};

\draw[thick] (2.5,-2.5) circle (1);
\node at (2.5,-2.5) {$\#^b S^2 \times S^2$};

\draw[thick] (5,-2.5) circle (1);
\node at (5,-2.5) {$\#^c (-E_8)$};

\draw[thick](1,0)--(1.5,0);
\draw[thick](3.5,0)--(4,0);

\draw[thick](1,2.5)--(1.5,2.5);
\draw[thick](3.5,2.5)--(4,2.5);
\draw[thick](1,-2.5)--(1.5,-2.5);
\draw[thick](3.5,-2.5)--(4,-2.5);

\draw[thick](2.5,1)--(2.5,1.5);
\draw[thick](2.5,-1)--(2.5,-1.5);

\draw[thick, <->]
(6.5 , -2.35) arc (-70:70: 1 and 2.5);

\node at (7.5,0) {{\Large $f_1$}};

\draw[thick, <->]
(0.1, -3.8) arc (-160:-20: 2.5 and 1);

\node at (2.5,-4.8) {{\Large $f_2$}};

\end{tikzpicture}

\caption{The $4$-manifold $\# 2(a+b) \mathbb{CP}^2 \# (2a+2b+32c+1) \overline{\mathbb{CP}^2}$ with $\mathbb{Z}_{2} \times \mathbb{Z}_2$-action.}\label{fig:3}
\end{figure}
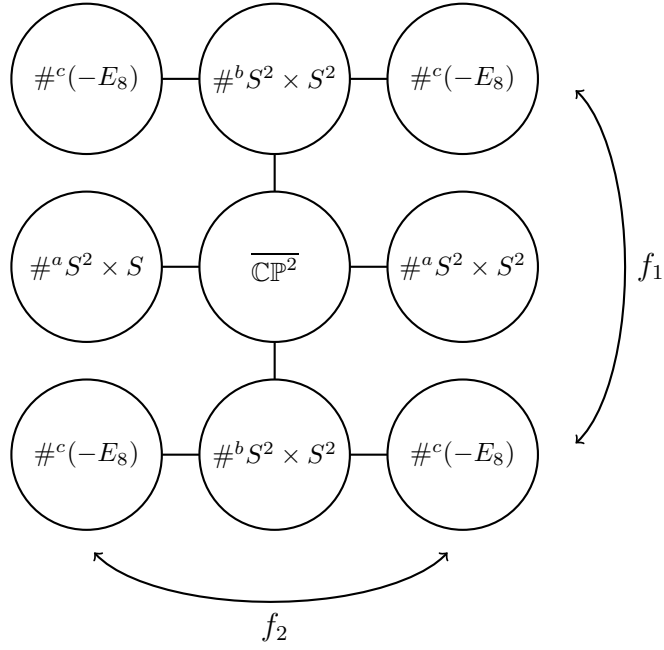

For each copy of $(S^2 \times S^2)$ we attached to $\overline{\mathbb{CP}^2}$, consider the $1$-dimensional subspace of $H^2(X , \mathbb{R})$ spanned by the Poinca\'re dual of the diagonal embedding $S^2 \to S^2 \times S^2$. The direct sum $V$ of these $1$-dimensional spaces is a maximal positive definite subspace of $H^2(X , \mathbb{R})$ preserved by $\phi_1$ and $\phi_2$. A simple calculation shows that (in the notation used in the proof of Proposition \ref{p:z2z2}):
\[
V = \mathbb{R}_{-1 , 1}^a \oplus \mathbb{R}_{1,-1}^b \oplus \mathbb{R}_{-1,-1}^{a+b}.
\]
In particular, there are no $\mathbb{R}_{1,1}$ summands. Therefore, Proposition \ref{p:z2z2} implies that the $\mathbb{Z}_2 \times \mathbb{Z}_2$-action on $H^2(X , \mathbb{Z})$ generated by $\phi_1 , \phi_2$ can not be realised by a smooth $\mathbb{Z}_2 \times \mathbb{Z}_2$-action, for any smooth structure on $X$, proving (iii).\\

To complete the proof it remains to show (ii). Let us view $X$ as the smooth $4$-manifold $X = \# (2a+2b-6c) S^2 \times S^2 \# 2c (K3) \# \overline{\mathbb{CP}^2}$. We will show that $\phi_1$ can be realised by a smooth involution. A similar argument applies for $\phi_2$. First, start with $\overline{\mathbb{CP}^2}$ with the same action of $f_1$ as before, i.e. $f_1( [x,y,z]) = [x,-y,z]$. As before, attach $\# a(S^2 \times S^2)$ to $[0,0,1]$ and $\# a(S^2 \times S^2)$ to $[0,0,-1]$ $f_1$-equivariantly. Also attach $\# (b-3c)(S^2 \times S^2)$ to $[0,1,0]$ and $\# (b-3c) (S^2 \times S^2)$ to $[0,-1,0]$ so that $f_1$ exchanges these two copies of $\# (b-3c)(S^2 \times S^2)$. Lastly, $f_1$-equivariantly attach $c$ pairs of copies of $K3$, to obtain a smooth $\langle f_1 \rangle = \mathbb{Z}_2$-action on $\# (2a+2b-6c) S^2 \times S^2 \# 2c (K3) \# \overline{\mathbb{CP}^2} = X$. It is not hard to see that this construction is such that the induced action of this involution on $H^2(X , \mathbb{Z})$ agrees with $\phi_1$.
\end{proof}

\begin{remark}
A similar type of non-smoothability result for $\mathbb{Z}_2 \times \mathbb{Z}_2$-actions on $X = \#(2l_1 + 2l_2 + 1 - 6k)S^2 \times S^2 \# 2k (K3)$, $l_1,l_2 \ge 3k$, $k \ge 1$ was obtained in \cite{kato}. It is interesting to note that \cite{kato} does not use families Seiberg-Witten theory and instead obtains the result using an equivariant version of Furuta's $10/8$-inequality \cite{fur}.
\end{remark}


\bibliographystyle{amsplain}

\end{document}